\begin{document}
\title[\hfil Nonexistence, existence and symmetry]
{Nonexistence, existence and symmetry of normalized ground states to
Choquard equations with a local perturbation}

\author[X. F. Li]
{Xinfu Li}

\address{Xinfu Li \newline
School of Science, Tianjin University of Commerce, Tianjin 300134,
Peoples's Republic of China} \email{lxylxf@tjcu.edu.cn}

\subjclass[2020]{35J20, 35B06, 35B33} \keywords{Normalized ground
state; symmetry; Choquard equation; upper critical exponent.}

\begin{abstract}
We study the Choquard equation with a local perturbation
\begin{equation*}
-\Delta u=\lambda u+(I_\alpha\ast|u|^p)|u|^{p-2}u+\mu|u|^{q-2}u,\
x\in \mathbb{R}^{N}
\end{equation*}
having prescribed mass
\begin{equation*}
\int_{\mathbb{R}^N}|u|^2dx=a^2.
\end{equation*}
For a $L^2$-critical or $L^2$-supercritical perturbation
$\mu|u|^{q-2}u$, we prove nonexistence, existence and symmetry of
normalized ground states, by using the mountain pass lemma, the
Poho\v{z}aev constraint method, the Schwartz symmetrization
rearrangements and some theories of polarizations. In particular,
our results cover the Hardy-Littlewood-Sobolev upper critical
exponent case $p=(N+\alpha)/(N-2)$. Our results are a nonlocal
counterpart of the results in  \cite{{Li 2021-4},{Soave JFA},{Wei-Wu
2021}}.
\end{abstract}

\maketitle \numberwithin{equation}{section}
\newtheorem{theorem}{Theorem}[section]
\newtheorem{lemma}[theorem]{Lemma}
\newtheorem{definition}[theorem]{Definition}
\newtheorem{remark}[theorem]{Remark}
\newtheorem{proposition}[theorem]{Proposition}
\newtheorem{corollary}[theorem]{Corollary}
\allowdisplaybreaks

\section{Introduction and main results}

\setcounter{section}{1}
\setcounter{equation}{0}

We consider the equation
\begin{equation}\label{e1.1}
-\Delta u=\lambda u+(I_\alpha\ast|u|^p)|u|^{p-2}u+\mu|u|^{q-2}u,\
x\in \mathbb{R}^{N},
\end{equation}
where $N\geq 1$, $\alpha\in(0,N)$, $I_{\alpha}$ is the Riesz
potential defined for every $x\in \mathbb{R}^N \setminus \{0\}$ by
\begin{equation}\label{e1.22}
I_{\alpha}(x)=\frac{A_\alpha(N)}{|x|^{N-\alpha}},\
A_\alpha(N)=\frac{\Gamma(\frac{N-\alpha}{2})}{\Gamma(\frac{\alpha}{2})\pi^{N/2}2^\alpha}
\end{equation}
with $\Gamma$ denoting the Gamma function (see \cite{Riesz1949AM},
P.19), $p$ and $q$ will be defined later. The equation (\ref{e1.1})
is usually called the nonlinear Choquard equation.  For the physical
case $N=3$, $p=2$,  $\alpha=2$ and $\mu=0$, (\ref{e1.1}) was
investigated by Pekar in \cite{Pekar 1954} to study the quantum
theory of a polaron at rest. In \cite{Lieb 1977}, Choquard applied
it as an approximation to Hartree-Fock theory of one component
plasma. It also arises in multiple particles systems \cite{Gross
1996} and quantum mechanics \cite{Penrose 1996}.

When looking for solutions to (\ref{e1.1}), a possible choice is to
fix $\lambda<0$ and to search for solutions to (\ref{e1.1}) as
critical points of the action functional
\begin{equation*}
J(u):=\int_{\mathbb{R}^N}\left(\frac{1}{2}|\nabla
u|^2-\frac{\lambda}{2}|u|^2-\frac{1}{2p}(I_\alpha\ast|u|^p)|u|^p-\frac{\mu}{q}|u|^q\right),
\end{equation*}
see for example \cite{{Li-Ma 2020},{Li-Ma-Zhang 2019},{Luo
2020},{Moroz-Schaftingen 2017}} and the references therein.

Alternatively, from a physical point of view, it is interesting to
find solutions of (\ref{e1.1}) having prescribed mass
\begin{equation}\label{e1.3}
\int_{\mathbb{R}^N}|u|^2=a^2.
\end{equation}
In this direction, define on $H^1(\mathbb{R}^N,\mathbb{C})$ the
energy functional
\begin{equation*}
E(u):=\frac{1}{2}\int_{\mathbb{R}^N}|\nabla
u|^2-\frac{1}{2p}\int_{\mathbb{R}^N}(I_\alpha\ast|u|^p)|u|^{p}
-\frac{\mu}{q}\int_{\mathbb{R}^N}|u|^{q}.
\end{equation*}
It is standard to check that $E\in C^1$  under some assumptions on
$p$ and $q$ and a critical point of $E$ constrained to
\begin{equation*}
S_a:=\left\{u\in
H^1(\mathbb{R}^N,\mathbb{C}):\int_{\mathbb{R}^N}|u|^2=a^2\right\}
\end{equation*}
gives rise to a solution to (\ref{e1.1}), satisfying (\ref{e1.3}).
Such solution is usually called a normalized solution of
(\ref{e1.1}). In this method, the parameter $\lambda\in \mathbb{R}$
arises as a Lagrange multiplier, which depends on the solution and
is not a priori given. In this paper, we will focus on the
normalized ground state of (\ref{e1.1}), defined as follows:

\begin{definition}\label{def1.1}
We say that $u$ is a normalized ground state to (\ref{e1.1}) on
$S_a$ if
\begin{equation*}
E(u)=c^{g}:=\inf\left\{E(v):v\in S_a,\ (E|_{S_a})'(v)=0\right\}.
\end{equation*}
The set of the normalized ground states will be denoted by
$\mathcal{G}$.
\end{definition}

When studying normalized solutions of the Choquard equation, three
exponents play an important role: the Hardy-Littlewood-Sobolev upper
critical exponent $\bar{p}$, the Hardy-Littlewood-Sobolev lower
critical exponent $\underline{p}$ and the $L^2$-critical exponent
$p^*$ defined by
\begin{equation*}
\bar{p}:=\left\{\begin{array}{ll}
\infty, & N=1,2,\\
\frac{N+\alpha}{N-2}, & N\geq 3,
\end{array}
\right. \ \  \underline{p}:=\frac{N+\alpha}{N}, \ \
p^*:=1+\frac{2+\alpha}{N}.
\end{equation*}
Recently, researchers pay much attention to the normalized solutions
to the Choquard equation (\ref{e1.1}) for the case $\mu=0$, i.e.,
\begin{equation}\label{e1.12}
-\Delta u=\lambda u+(I_\alpha\ast|u|^p)|u|^{p-2}u,\ x\in
\mathbb{R}^{N}.
\end{equation}
For $\underline{p}<p<p^*$, Ye \cite{Ye 2016} obtained a normalized
ground state to (\ref{e1.12}) by considering the minimizer of $E$
constrained on $S_a$. For $p^*<p<\bar{p}$, the functional $E$ is no
longer bounded from below on $S_a$, Luo \cite{Luo 2019} obtained a
normalized ground state to (\ref{e1.12}) by considering the
minimizer of $E$ constrained on $\mathcal{P}$ defined as in
(\ref{e1.15}). For $p=p^*$, by scaling invariance, the result is
delicate, see \cite{Cazenave-Lions 1982} and \cite{Ye 2016}. In the
case $N\geq 3$,  Li and Ye in \cite{Li-Ye_JMP_2014} considered the
general equation
\begin{equation}\label{e1.123}
-\Delta u=\lambda u+(I_\alpha\ast F(u))f(u),\ x\in \mathbb{R}^{N}
\end{equation}
under a set of assumptions on $f$, which when $f$ takes the special
form $f(s)=C_1|s|^{r-2}s+C_2|s|^{p-2}s$  requires that $p^*<r\leq
p<\bar{p}$. Bartsch, Liu and Liu \cite{Bartsch-Liu-Liu_2020} further
considered the existence of a normalized ground state and the
existence of infinitely many normalized solutions to (\ref{e1.123})
in all dimensions $N\geq 1$. Recently, Yuan, Chen and Tang
\cite{Yuan-Chen-Tang_2020} reconsidered (\ref{e1.123}) with more
general $f\in C(\mathbb{R},\mathbb{R})$. When $p=p^*$ and $2<q<q^*$,
\cite{Liu-Shi 2018} considered the existence and orbital stability
of the normalized ground state to (\ref{e1.1}). Most existing
results considered similar equations to (\ref{e1.1}) with one
positive nonlinearity and one negative nonlinearity, see
\cite{{Bellazzini-Jeanjean-Luo
2013},{Bellazzini-Siciliano},{Cingolani-Jeanjean
2019},{Siciliano-Silva 2020}} for the study of the
Schr\"{o}dinger-Poisson system.

To our knowledge, there are no papers considering the normalized
solutions to the Choquard equation with the Hardy-Littlewood-Sobolev
upper critical exponent $p=\bar{p}$. By \cite{Moroz-Schaftingen JFA
2013}, for fixed $\lambda<0$, (\ref{e1.12}) has no solutions in
$H^1(\mathbb{R}^N)$ under the range $p\geq \bar{p}$. However, the
equation
\begin{equation}\label{e1.13}
-\Delta u=(I_\alpha\ast|u|^{\bar{p}})|u|^{\bar{p}-2}u,\ x\in
\mathbb{R}^N
\end{equation}
has solutions in $D^{1,2}(\mathbb{R}^N)$, see \cite{Gao-Yang-1}. So
it is interesting to study the existence of  normalized solutions to
(\ref{e1.12}) with $p=\bar{p}$ under a local perturbation
$\mu|u|^{q-2}u$, namely equation (\ref{e1.1}). In this paper, we
will give an affirmative answer to the problem.

Now, we  present our  first main result.

\begin{theorem}\label{thm1.1}
Assume $N\geq 1$, $\alpha\in (0,N)$, $a>0$, $\mu>0$, $q^*=2+4/N$,
\begin{equation*}
q^*\leq q<2^*:=\left\{\begin{array}{ll}
\infty, & N=1,2,\\
\frac{2N}{N-2}, & N\geq 3,
\end{array}
\right.  \ \ \ p^*<p\left\{\begin{array}{ll}
<\bar{p}, & N=1,2,\\
\leq \bar{p}, & N\geq 3.
\end{array}
\right.
\end{equation*}
If $q=q^*$, we further assume that $\mu a^{4/N}<(a_N^*)^{4/N}$,
where $a_N^*$ is defined in (\ref{e2.1}). Then the equation
(\ref{e1.1}) has a mountain pass type normalized ground state,
$c^g>0$ if $p<\bar{p}$, and
$$0<c^g<\frac{2+\alpha}{2(N+\alpha)}S_\alpha^{\frac{N+\alpha}{2+\alpha}}$$
if $p=\bar{p}$, where $S_\alpha$ is defined in (\ref{e3.14}).
Moreover, every $u\in \mathcal{G}$ solves (\ref{e1.1}) with some
$\lambda=\lambda(u)<0$.
\end{theorem}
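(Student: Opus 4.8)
The plan is to realize both the existence and the ground-state characterization through the Poho\v{z}aev manifold $\mathcal{P}$ from (\ref{e1.15}), exploiting the mountain-pass geometry of the $L^2$-preserving fiber map. For $u\in S_a$ set $u_t(x)=t^{N/2}u(tx)$, so that $u_t\in S_a$ and
\begin{equation*}
E(u_t)=\frac{t^2}{2}\|\nabla u\|_2^2-\frac{t^{N(p-1)-\alpha}}{2p}\int_{\mathbb{R}^N}(I_\alpha\ast|u|^p)|u|^p-\frac{\mu t^{N(q-2)/2}}{q}\|u\|_q^q .
\end{equation*}
Since $p>p^*$ forces the nonlocal exponent $N(p-1)-\alpha>2$ and $q\geq q^*$ forces the local exponent $N(q-2)/2\geq 2$, the map $t\mapsto E(u_t)$ is positive for small $t$ and tends to $-\infty$ as $t\to\infty$; in the borderline case $q=q^*$ (where the local term also scales like $t^2$) positivity near $t=0$ is exactly what the assumption $\mu a^{4/N}<(a_N^*)^{4/N}$ guarantees, via the sharp Gagliardo--Nirenberg constant $a_N^*$ of (\ref{e2.1}). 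First I would show that each fiber has a unique maximizer $t(u)$, characterized by $u_{t(u)}\in\mathcal{P}$, with the second fiber derivative strictly negative there; this makes $\mathcal{P}$ a $C^1$ natural constraint on which $E$ is coercive and bounded below. A Gagliardo--Nirenberg and Hardy--Littlewood--Sobolev estimate applied to the identity $P(u)=0$ then yields a uniform lower bound on $\|\nabla u\|_2$ over $\mathcal{P}$, whence $c:=\inf_{\mathcal{P}}E>0$.

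Next I would establish the mountain-pass structure of $E$ on $S_a$ and verify that the min-max level coincides with $c$. To obtain a compactness-friendly Palais--Smale sequence I would apply the minimax principle to the augmented functional $(s,u)\mapsto E(u_{e^s})$ on $\mathbb{R}\times S_a$, in the spirit of Jeanjean, which produces a sequence $\{u_n\}\subset S_a$ with $E(u_n)\to c$, $(E|_{S_a})'(u_n)\to 0$ and, crucially, $P(u_n)\to 0$. Coercivity of $E$ on a neighborhood of $\mathcal{P}$ (using again the two scaling exponents exceeding or equalling $2$) gives $H^1$-boundedness of $\{u_n\}$.

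The compactness analysis splits according to the nonlocal exponent. In the subcritical range $p<\bar p$ I would pass to a translated subsequence to extract a nontrivial weak limit $u$, ruling out vanishing because $c>0$ and excluding dichotomy by a strict subadditivity property of $a\mapsto c(a)$; the limit is then a critical point with $E(u)=c$ and $c^g>0$. The case $p=\bar p$ is where I expect the \emph{main obstacle}: the nonlocal term loses compactness through bubbling precisely at the level $\frac{2+\alpha}{2(N+\alpha)}S_\alpha^{(N+\alpha)/(2+\alpha)}$, the energy of the Choquard bubble solving (\ref{e1.13}), where $S_\alpha$ is the constant in (\ref{e3.14}). Restoring compactness requires the strict inequality $c<\frac{2+\alpha}{2(N+\alpha)}S_\alpha^{(N+\alpha)/(2+\alpha)}$, which I would prove in Brezis--Nirenberg fashion by testing the min-max with a truncated, mass-renormalized rescaling $U_\varepsilon$ of an extremal for $S_\alpha$, using that the subcritical perturbation $\mu\|u\|_q^q>0$ strictly lowers the threshold energy. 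The delicate point is the simultaneous control of the mass renormalization and of the cross and remainder terms in the Hardy--Littlewood--Sobolev energy expansion.

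Finally I would identify the ground state and fix the sign of $\lambda$. Every critical point of $E|_{S_a}$ satisfies the Poho\v{z}aev identity, hence lies in $\mathcal{P}$, so $c^g\geq c$; since the solution produced satisfies $E=c$, we get $c^g=c$ and it is a normalized ground state. Combining the Nehari relation $\lambda a^2=\|\nabla u\|_2^2-\int_{\mathbb{R}^N}(I_\alpha\ast|u|^p)|u|^p-\mu\|u\|_q^q$ with the Poho\v{z}aev identity eliminates the gradient term and yields
\begin{equation*}
\lambda a^2=\frac{(N-2)p-(N+\alpha)}{2p}\int_{\mathbb{R}^N}(I_\alpha\ast|u|^p)|u|^p+\frac{\mu[(N-2)q-2N]}{2q}\|u\|_q^q ,
\end{equation*}
where the first bracket is $\leq 0$ (vanishing only at $p=\bar p$) and the second is $<0$ because $q<2^*$; as $u\neq 0$ this forces $\lambda<0$.
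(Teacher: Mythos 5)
Your overall architecture matches the paper's: the fiber maps $t\mapsto t^{N/2}u(tx)$ and the Poho\v{z}aev manifold $\mathcal{P}$ from (\ref{e1.15}), the augmented functional producing a Palais--Smale sequence with $P(u_n)\to 0$, the Brezis--Nirenberg test-function estimate giving the threshold $\frac{2+\alpha}{2(N+\alpha)}S_\alpha^{(N+\alpha)/(2+\alpha)}$ when $p=\bar p$ (the paper's Lemma \ref{lem4.3}), the identification $c^g=c^{po}$ via the Poho\v{z}aev identity, and the Nehari--Poho\v{z}aev computation for the multiplier (your formula for $\lambda a^2$ is exactly the paper's, since $\eta_p-1=\frac{(N-2)p-(N+\alpha)}{2p}$ and $\gamma_q-1=\frac{(N-2)q-2N}{2q}$). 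The genuine gap is the compactness step. You run the minimax on all of $\mathbb{R}\times S_a$ and propose to recover compactness of the Palais--Smale sequence up to translations, excluding dichotomy ``by a strict subadditivity property of $a\mapsto c(a)$.'' That property is never proved, and it is the hardest part of your plan: since $E$ is unbounded below on $S_a$ in this $L^2$-supercritical regime, $c(a)$ is a minimax/Poho\v{z}aev level rather than a global minimization level, so the classical subadditivity arguments do not apply off the shelf. What your route actually requires is (i) monotonicity of $a\mapsto c^{po}(a)$, (ii) strict decrease at any mass where the level is attained, and (iii) a Brezis--Lieb splitting showing that under dichotomy the weak limit would be a critical point of mass $b<a$ attaining $c^{po}(b)$, yielding the contradiction. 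None of this is sketched; moreover, when $p=\bar p$ your concentration-compactness would have to handle translation loss and critical bubbling simultaneously, which is strictly harder than the bubbling-only analysis you outline.

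The paper circumvents all of this by a device that is precisely its main technical point (cf.\ Remark \ref{rmk1.6}): using the rearrangement inequality $\int_{\mathbb{R}^N}(I_\alpha\ast(|u|^*)^p)(|u|^*)^p\geq\int_{\mathbb{R}^N}(I_\alpha\ast|u|^p)|u|^p$ (which goes the favorable way for the Choquard term) together with $\|\nabla(|u|^*)\|_2\leq\|\nabla u\|_2$, it proves $c^{po}=c^{mp}=c_r^{mp}$, so the mountain pass can be run inside the radial class $\mathbb{R}\times S_{a,r}$. Then the compact embedding of $H^1_r(\mathbb{R}^N)$ into $L^t(\mathbb{R}^N)$, $t\in(2,2^*)$, removes translations structurally: the weak limit is nontrivial (using $c^{po}>0$, and the strict threshold when $p=\bar p$), solves the equation with $\lambda<0$, and strong $H^1$ convergence follows, with only bubbling left to exclude in the critical case (Proposition \ref{pro1.1}). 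To make your proposal complete you must either supply the monotonicity/strict-subadditivity lemma (i)--(iii), or replace that step by the paper's symmetrization argument and restrict the minimax class to $\mathbb{R}\times S_{a,r}$.
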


\begin{remark}\label{rmk1.6}
Recently, Yang \cite{Yang 2020} considered the fractional equation
\begin{equation}\label{e1.24}
(-\Delta)^\sigma u=\lambda
u+|u|^{q-2}u+\mu(I_\alpha\ast|u|^p)|u|^{p-2}u,\ x\in \mathbb{R}^{N}
\end{equation}
with $N\geq 2$, $\sigma\in(0,1)$ and $\alpha\in (N-2\sigma,N)$.
Under the assumptions
\begin{equation*}
2+\frac{4\sigma}{N}<q<\frac{2N}{N-2\sigma}\ \ \mathrm{and}\ \
1+\frac{2\sigma+\alpha}{N}\leq p\leq \frac{q}{2}+\frac{\alpha}{N},
\end{equation*}
they obtained a mountain pass type positive radial normalized ground
state to (\ref{e1.24}). Note that the case
$p>\frac{q}{2}+\frac{\alpha}{N}$ was left in \cite{Yang 2020}. In
this paper, by using the Schwartz symmetrization rearrangements, we
can show that $c_r^{mp}=c^{mp}=c^{po}$ and then complement the
interval, see the proof of Theorem \ref{thm1.1}.
\end{remark}

Our second main result is about the positivity and radial symmetry
of the normalized ground states.
\begin{theorem}\label{thm1.2}
Assume  the conditions in Theorem \ref{thm1.1} hold. Let $u$ be a
normalized ground state to (\ref{e1.1}) on $S_a$, then

(1) $|u|>0$ is a normalized ground state to (\ref{e1.1});

(2) there exist $x_0\in \mathbb{R}^N$ and a non-increasing positive
function $v: (0,\infty)\to\mathbb{R}$ such that $|u|=v(|x-x_0|)$ for
almost every $x\in \mathbb{R}^N$;

(3) $u=e^{i\theta}|u|$ for some $\theta\in \mathbb{R}$.
\end{theorem}

By using the methods used in \cite{Li 2021-4}, we can obtain the
following nonexistence result, see \cite{Wei-Wu 2021} for a
different proof.

\begin{theorem}\label{thm1.3}
Let $N\geq 1$, $\alpha\in (0,N)$, $a>0$, $\mu>0$, $q=q^*=2+4/N$,
\begin{equation*}
 p^*<p\left\{\begin{array}{ll}
<\bar{p}, & N=1,2,\\
\leq \bar{p}, & N\geq 3,
\end{array}
\right.
\end{equation*}
and  $\mu a^{4/N}\geq (a_N^*)^{4/N}$ with $a_N^*$ defined in
(\ref{e2.1}). Then $c^{po}=0$ and thus $c^{po}$ can not be attained
and (\ref{e1.1}) has no normalized ground states, where $c^{po}$ is
defined in (\ref{e1.15}).
\end{theorem}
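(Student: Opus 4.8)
\emph{Proof proposal.} The plan is to work on the Poho\v{z}aev constraint $\mathcal{P}=\{u\in S_a:P(u)=0\}$, where
\[
P(u)=\|\nabla u\|_2^2-\frac{Np-N-\alpha}{2p}D(u)-\frac{2\mu}{q^*}\int_{\mathbb{R}^N}|u|^{q^*},\qquad D(u):=\int_{\mathbb{R}^N}(I_\alpha\ast|u|^p)|u|^p,
\]
is obtained from $\frac{d}{dt}E(u_t)|_{t=1}=0$ along the mass-preserving dilation $u_t(x)=t^{N/2}u(tx)$. First I would record that on $\mathcal{P}$ the energy collapses to a single positive term: eliminating $\|\nabla u\|_2^2$ by means of $P(u)=0$ gives $E(u)=\frac{Np-N-\alpha-2}{4p}D(u)$. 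Since $p>p^*$ forces $Np-N-\alpha>2$ and $D(u)>0$ for every $u\in S_a$, this yields $E(u)>0$ for all $u\in\mathcal{P}$; hence $c^{po}=\inf_{\mathcal{P}}E\ge 0$ and, crucially, the value $0$ can never be attained on $\mathcal{P}$.

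Next I would set $A(u):=\|\nabla u\|_2^2-\frac{2\mu}{q^*}\int_{\mathbb{R}^N}|u|^{q^*}$ and analyze the fiber map. Using $D(u_t)=t^{Np-N-\alpha}D(u)$ and $\int_{\mathbb{R}^N}|u_t|^{q^*}=t^2\int_{\mathbb{R}^N}|u|^{q^*}$, one gets $P(u_t)=t^2A(u)-\frac{Np-N-\alpha}{2p}t^{Np-N-\alpha}D(u)$, so $u_t\in\mathcal{P}$ for a unique $t>0$ precisely when $A(u)>0$. Substituting this $t$ into $E(u_t)=\frac{Np-N-\alpha-2}{4p}D(u_t)$ produces a reduced energy of the form
\[
E(u_t)=K\,\frac{A(u)^{\beta}}{D(u)^{\beta-1}},\qquad \beta:=\frac{Np-N-\alpha}{Np-N-\alpha-2}>1,\quad K>0,
\]
which is invariant under dilations of the profile $u$. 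Thus $c^{po}=K\inf\{A(u)^{\beta}/D(u)^{\beta-1}:u\in S_a,\ A(u)>0\}$, and the whole matter reduces to showing that this shape functional has infimum $0$.

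To obtain $c^{po}\le 0$ I would exploit the hypothesis $\mu a^{4/N}\ge (a_N^*)^{4/N}$ together with the sharp $L^2$-critical Gagliardo--Nirenberg inequality, whose extremal is the mass-critical ground state $Q$ with $\|Q\|_2=a_N^*$. For $w:=(a/\|Q\|_2)Q\in S_a$ equality holds, giving $A(w)=\|\nabla w\|_2^2\big(1-\mu a^{4/N}/(a_N^*)^{4/N}\big)\le 0$. On the other hand, sufficiently spread-out functions in $S_a$ make $\int_{\mathbb{R}^N}|u|^{q^*}/\|\nabla u\|_2^2$ arbitrarily small and hence satisfy $A(u)>0$. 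Since $S_a$ is path-connected, both $A$ and $D$ are continuous, and $D>0$ everywhere on $S_a$, joining $w$ to such a spread-out function by a continuous path and invoking the intermediate value theorem produces $u_n\in S_a$ with $A(u_n)\to 0^+$ while $D(u_n)$ stays bounded away from $0$. Then $K\,A(u_n)^{\beta}/D(u_n)^{\beta-1}\to 0$, so the projections $(u_n)_{t_n}\in\mathcal{P}$ satisfy $E\big((u_n)_{t_n}\big)\to 0$ and $c^{po}=0$. The delicate point is exactly this step: converting the \emph{saturation} of Gagliardo--Nirenberg at the threshold $\mu a^{4/N}=(a_N^*)^{4/N}$ into a minimizing sequence that keeps $A>0$ yet forces $A\to 0$, which is where the assumption $\mu a^{4/N}\ge(a_N^*)^{4/N}$ enters in an essential way.

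Finally, combining $c^{po}\ge 0$ with $c^{po}\le 0$ gives $c^{po}=0$, and since $E>0$ strictly on $\mathcal{P}$ (first paragraph) this infimum is not attained. Because every normalized solution of (\ref{e1.1}) satisfies the Poho\v{z}aev identity and therefore lies in $\mathcal{P}$, while a normalized ground state would realize the level $c^{po}$ on $\mathcal{P}$, the non-attainment of $c^{po}$ immediately precludes the existence of normalized ground states, completing the proof.
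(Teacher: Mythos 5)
Your core argument for $c^{po}=0$ and its non-attainment is correct, and it takes a genuinely different route from the paper's. The paper gives no self-contained proof: Remark \ref{rmk1.7} defers to the two-case test-function construction of Lemma 3.2 in \cite{Li 2021-4}, where the borderline case $\mu a^{4/N}=(a_N^*)^{4/N}$ requires a specially normalized $\tilde u$ with $\|\tilde u\|_q^q=\left(\int_{\mathbb{R}^N}(I_\alpha\ast|\tilde u|^p)|\tilde u|^p\right)^{1/(p\eta_p)}$. You instead observe that $E=\frac{Np-N-\alpha-2}{4p}D>0$ on $\mathcal{P}$ (so $c^{po}\geq 0$ and the level $0$ can never be attained there), reduce everything to the scale-invariant quotient $K A(u)^{\beta}/D(u)^{\beta-1}$, note that the rescaled Gagliardo--Nirenberg extremal $w$ satisfies $A(w)\leq 0$ exactly when $\mu a^{4/N}\geq (a_N^*)^{4/N}$, and then run a path/intermediate-value argument to produce $u_n$ with $A(u_n)\to 0^+$ and $D(u_n)$ bounded away from zero. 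This treats the equality and strict-inequality cases uniformly and is arguably cleaner than the construction the paper imports.

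There are, however, two gaps. The first is your source of points with $A>0$: the claim that ``sufficiently spread-out functions make $\int_{\mathbb{R}^N}|u|^{q^*}/\|\nabla u\|_2^2$ arbitrarily small'' is false for the natural meaning of spreading out, since at the $L^2$-critical exponent $q^*=2+4/N$ that ratio is invariant under the mass-preserving dilations $u_t=t^{N/2}u(t\,\cdot)$ --- this invariance is precisely what makes $q^*$ critical, and it is the reason your own reduced functional is dilation-invariant. The conclusion you need (existence of $u\in S_a$ with $A(u)>0$, hence $\mathcal{P}\neq\emptyset$) is true, but requires a different construction: for instance, split the mass into $k$ widely separated bumps each of mass $a^2/k$; applying the Gagliardo--Nirenberg inequality to each bump gives $\|u\|_{q^*}^{q^*}\leq C_{N,q^*}^{q^*}(a^2/k)^{2/N}\|\nabla u\|_2^2$, so the ratio tends to $0$ as $k\to\infty$. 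This is fixable, but as written the justification fails at exactly the exponent the theorem concerns.

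The second gap is the final deduction. You assert that ``a normalized ground state would realize the level $c^{po}$,'' but nothing you proved gives this. Under Definition \ref{def1.1} a ground state realizes $c^{g}=\inf\{E(v):v\in S_a,\ (E|_{S_a})'(v)=0\}$, and since every critical point lies in $\mathcal{P}$ one only gets $c^{g}\geq c^{po}$; the identity $c^{g}=c^{po}$ (Lemma \ref{lem4.7}) is established in the paper only under the hypothesis that $c^{po}$ \emph{is} attained by a critical point, which is exactly what fails here. What your argument actually yields is that no critical point of $E|_{S_a}$ sits at the level $c^{po}=0$, i.e. $\mathcal{C}=\emptyset$; it does not by itself exclude a least-energy solution at some positive level. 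To obtain literally ``no normalized ground states'' one would need either to rule out all critical points in this parameter regime, or to prove $c^{g}=c^{po}$ without attainment. To be fair, the word ``thus'' in the statement of Theorem \ref{thm1.3} makes the same jump, and in the cited literature the nonexistence assertion is to be read as non-attainment of $\inf_{\mathcal{P}}E$ by solutions; but presented as a complete logical deduction, your last step has a genuine hole.
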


\begin{remark}\label{rmk1.7}
The proof of Theorem \ref{thm1.3} can be done by modifying the proof
of  Theorem 1.5 in \cite{Li 2021-4} done to the Schr\"{o}dinger
equation, so we omit the proof here. The most difference is in Case
2 of Lemma 3.2 in \cite{Li 2021-4} choosing $\tilde{u}$ such that
$\|\tilde{u}\|_2^2=a^2$ and $\|\tilde{u}\|_q^q
=\left(\int_{\mathbb{R}^N}(I_\alpha\ast|\tilde{u}|^p)|\tilde{u}|^p\right)^{\frac{1}{p\eta_p}}$,
where $\eta_p$ is defined in (\ref{e1.25}).
\end{remark}

Now we outline the methods used in this paper to prove Theorems
\ref{thm1.1} and \ref{thm1.2}. For the interaction of
$(I_\alpha\ast|u|^p)|u|^{p-2}u$ and $|u|^{q-2}u$, and the inequality
for the Schwartz symmetrization rearrangement
$$\int_{\mathbb{R}^N}(I_\alpha\ast(|u|^*)^p)(|u|^*)^p\geq \int_{\mathbb{R}^N}(I_\alpha\ast|u|^p)|u|^p,$$
where $|u|^*$ is the Schwartz symmetrization rearrangement of $|u|$,
the methods used in \cite{Bellazzini-Jeanjean-Luo 2013}, \cite{Luo
2019} or \cite{Soave JDE} can not solve our problems for the optimal
range of parameters. In this paper, we combine the methods used in
 \cite{Jeanjean-Le}, \cite{Moroz-Schaftingen
2015}(see also \cite{{Jeanjean-Tanaka 2003},{Li-Ma-Zhang 2019}}) and
\cite{Soave JDE} to prove Theorems \ref{thm1.1} and \ref{thm1.2}.
Using this method, we can treat the existence and symmetry of the
normalized ground states to (\ref{e1.1}) simultaneously. Precisely,
we first use the mountain pass lemma to obtain a Palais-Smale
sequence $\{u_n\}$ of $E$ on $S_{a}\cap H_r^1(\mathbb{R}^N)$ with
$P(u_n)\to 0$ and $E(u_n)\to c_{r}^{mp}$ as $n\to\infty$, where
\begin{equation*}
P(u):=\int_{\mathbb{R}^N}|\nabla
u|^2-\eta_p\int_{\mathbb{R}^N}(I_\alpha\ast|u|^p)|u|^{p}
-\mu\gamma_q\int_{\mathbb{R}^N}|u|^{q}
\end{equation*}
and
\begin{equation}\label{e1.25}
\eta_p:= \frac{N}{2}-\frac{N+\alpha}{2p},\ \
\gamma_q:=\frac{N}{2}-\frac{N}{q}.
\end{equation}
Secondly, by using the Poho\v{z}aev constraint method and the
Schwartz symmetrization rearrangements, we can show that
$c_r^{mp}=c^{mp}=c^{po}$, where
\begin{equation}\label{e1.15}
c^{po}:=\inf_{u\in \mathcal{P}}E(u),\ \ \mathcal{P}:=\left\{u\in
S_a:P(u)=0\right\}.
\end{equation}
Thirdly, by using the radial symmetry of $\{u_n\}$, the bounds of
$c^{po}$, and the relationship  of $c^{po}$ and $c^g$,  we can show
that $\{u_n\}$ converges to a normalized ground state of
(\ref{e1.1}). Lastly, by associating any  normalized ground state
$u$ to a special path in $\Gamma$,  and using $c^{mp}=c^{po}$ and
the theories of polarizations, we can obtain the radial symmetry of
$|u|$.

\medskip

This paper is organized as follows. In Section 2, we cite some
preliminaries. Sections 3 and 4 are devoted to the proof of Theorems
\ref{thm1.1} and \ref{thm1.2}, respectively.

\medskip

\textbf{Notation}: In this paper, it is understood that all
functions, unless otherwise stated, are complex valued, but for
simplicity we write $L^t(\mathbb{R}^N)$, $H^1(\mathbb{R}^N)$,
$D^{1,2}(\mathbb{R}^N)$, .... For $1\leq t<\infty$,
$L^t(\mathbb{R}^N)$ is the usual Lebesgue space endowed with the
norm $\|u\|_t^t:=\int_{\mathbb{R}^N}|u|^t$,  $H^1(\mathbb{R}^N)$ is
the usual Sobolev space endowed with the norm
\begin{equation*}
\|u\|^2:=\int_{\mathbb{R}^N}\left(|\nabla u|^2+|u|^2\right),
\end{equation*}
and $D^{1,2}(\mathbb{R}^N):=\left\{u\in
L^{2^*}(\mathbb{R}^N):|\nabla u|\in L^{2}(\mathbb{R}^N)\right\}$.
$H_{r}^1(\mathbb{R}^N)$ denotes the subspace of functions in
$H^1(\mathbb{R}^N)$ which are radially symmetric with respect to
zero.  $S_{a,r}:=S_a\cap H_{r}^1(\mathbb{R}^N)$. $C,\ C_1,\ C_2,\
...$ denote positive constants, whose values can change from line to
line.

\section{Preliminaries}
\setcounter{section}{2} \setcounter{equation}{0}

The following Gagliardo-Nirenberg inequality can be found in
\cite{Weinstein 1983}.
\begin{lemma}\label{lem2.5}
Let $N\geq 1$ and $2<p<2^*$, then the following sharp
Gagliardo-Nirenberg inequality
\begin{equation*}
\|u\|_{p}\leq C_{N,p}\|u\|_2^{1-\gamma_p}\|\nabla u\|_2^{\gamma_p}
\end{equation*}
holds for any $u\in H^1(\mathbb{R}^N)$, where the sharp constant
$C_{N,p}$ is
\begin{equation*}
C_{N,p}^{p}=\frac{2p}{2N+(2-N)p}\left(\frac{2N+(2-N)p}{N(p-2)}\right)^{\frac{N(p-2)}{4}}\frac{1}{\|Q_{p}\|_2^{p-2}}
\end{equation*}
and $Q_p$ is the unique positive radial solution of equation
\begin{equation*}
-\Delta Q+Q=|Q|^{p-2}Q.
\end{equation*}
In the special case $p=2+4/N$,
$C_{N,p}^{p}=p/\left(2\|Q_{p}\|_2^{4/N}\right)$, or equivalently,
\begin{equation}\label{e2.1}
\|Q_{p}\|_2=\left(\frac{p}{2C_{N,p}^{p}}\right)^{N/4}=:a_N^*.
\end{equation}
\end{lemma}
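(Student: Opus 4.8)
The plan is to identify $C_{N,p}^p$ with the optimal value of a scaling-invariant quotient and to show that this value is attained precisely by $Q_p$. Set $\gamma_p=\frac{N}{2}-\frac{N}{p}=\frac{N(p-2)}{2p}$ and $W(u):=\|u\|_p^p/\bigl(\|\nabla u\|_2^{p\gamma_p}\|u\|_2^{p(1-\gamma_p)}\bigr)$. Raising the asserted inequality to the power $p$, the statement that $C_{N,p}$ is the sharp constant is equivalent to
$$S:=\sup_{0\neq u\in H^1(\mathbb{R}^N)}W(u)=C_{N,p}^p.$$
The quotient $W$ is invariant under translations, under multiplication by scalars, and under the two-parameter dilation $u\mapsto\lambda u(\mu\cdot)$ (the exponents $p\gamma_p$ and $p(1-\gamma_p)$ are tuned precisely so that both dilation parameters cancel). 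First I would note $S<\infty$: this is merely the non-sharp Gagliardo-Nirenberg inequality, obtained from interpolation together with the Sobolev embedding $H^1(\mathbb{R}^N)\hookrightarrow L^{2^*}(\mathbb{R}^N)$, which is where the restriction $2<p<2^*$ enters.

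The main step, and the main obstacle, is to prove that $S$ is attained, the difficulty being that $W$ is invariant under the noncompact group of dilations and translations, so a bare maximizing sequence need not converge. I would take a maximizing sequence and use the two dilation parameters to normalize $\|u_n\|_2=\|\nabla u_n\|_2=1$, so that $\|u_n\|_p^p\to S$. Replacing each $u_n$ by its Schwartz symmetrization $u_n^\ast$ does not increase $\|\nabla u_n\|_2$ while preserving $\|u_n\|_2$ and $\|u_n\|_p$, hence does not decrease $W(u_n)$; so I may assume every $u_n$ is radial and radially non-increasing. The sequence is bounded in $H^1(\mathbb{R}^N)$, so along a subsequence $u_n\rightharpoonup u$ weakly in $H^1(\mathbb{R}^N)$; since $2<p<2^*$ the compactness of the embedding of radial functions into $L^p(\mathbb{R}^N)$ gives $u_n\to u$ strongly in $L^p(\mathbb{R}^N)$ (for $N=1$ one recovers tightness instead from the decay $u_n(r)\le Cr^{-1/2}$ of $L^2$-normalized radially decreasing functions). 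Hence $\|u\|_p^p=S>0$ and $u\neq 0$. By weak lower semicontinuity of the $L^2$ norms of $u$ and $\nabla u$ one has $\|\nabla u\|_2\le 1$ and $\|u\|_2\le 1$, so $W(u)\ge S$; since $S$ is the supremum, $W(u)=S$ and both inequalities are forced to be equalities, so $u$ is a maximizer.

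Next I would derive the Euler-Lagrange equation for the maximizer $u$: it satisfies $-c_1\Delta u+c_2 u=c_3|u|^{p-2}u$ with positive constants $c_i$ built from the norms of $u$. Applying the dilation $u\mapsto\lambda u(\mu\cdot)$ with $\lambda,\mu$ chosen so that $c_1=c_2=c_3=1$ reduces this to $-\Delta Q+Q=|Q|^{p-2}Q$. Since $u$ is nonnegative, radial and (by the strong maximum principle) positive, the uniqueness of the positive radial solution of this equation forces the optimizer to coincide, up to translation, dilation and a scalar factor, with $Q_p$.

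Finally I would evaluate $S=W(Q_p)$ using two integral identities for $Q_p$: testing the equation against $Q_p$ gives the Nehari identity $\|\nabla Q_p\|_2^2+\|Q_p\|_2^2=\|Q_p\|_p^p$, while the Pohozaev identity gives $\frac{N-2}{2}\|\nabla Q_p\|_2^2+\frac{N}{2}\|Q_p\|_2^2=\frac{N}{p}\|Q_p\|_p^p$. Solving this $2\times2$ linear system yields
$$\|\nabla Q_p\|_2^2=\frac{N(p-2)}{2N+(2-N)p}\|Q_p\|_2^2,\qquad \|Q_p\|_p^p=\frac{2p}{2N+(2-N)p}\|Q_p\|_2^2.$$
Substituting these into $W(Q_p)$, the power of $\|Q_p\|_2$ collapses to $-(p-2)$ and the remaining constants assemble into exactly the stated expression for $C_{N,p}^p$. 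For the special case $p=2+4/N$ a direct substitution gives $N(p-2)/4=1$ and $2N+(2-N)p=8/N$, whence the formula reduces to $C_{N,p}^p=p/\bigl(2\|Q_p\|_2^{4/N}\bigr)$, which rearranges to $\|Q_p\|_2=(p/(2C_{N,p}^p))^{N/4}=a_N^\ast$.
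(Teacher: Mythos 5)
The paper does not prove this lemma at all; it simply cites Weinstein's 1983 paper, and your argument is a correct reconstruction of exactly that classical proof: the scale-invariant Weinstein functional, attainment of the supremum via Schwarz symmetrization and radial compactness, identification of the optimizer with $Q_p$ through the Euler--Lagrange equation and uniqueness, and evaluation of the constant from the Nehari and Poho\v{z}aev identities (your algebra for $\|\nabla Q_p\|_2^2$, $\|Q_p\|_p^p$ and the final exponent $-(p-2)$ checks out, as does the special case $p=2+4/N$). The only blemish is cosmetic: for $N=2$ the finiteness of the supremum cannot be attributed to the embedding $H^1\hookrightarrow L^{2^*}$ (here $2^*=\infty$ and that embedding fails), but it follows from $H^1(\mathbb{R}^2)\hookrightarrow L^r$ for all finite $r$.
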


The following well-known Hardy-Littlewood-Sobolev inequality  can be
found in \cite{Lieb-Loss 2001}.

\begin{lemma}\label{lem HLS}
Let $N\geq 1$, $p$, $r>1$ and $0<\beta<N$ with
$1/p+(N-\beta)/N+1/r=2$. Let $u\in L^p(\mathbb{R}^N)$ and $v\in
L^r(\mathbb{R}^N)$. Then there exists a sharp constant
$C(N,\beta,p)$, independent of $u$ and $v$, such that
\begin{equation*}
\left|\int_{\mathbb{R}^N}\int_{\mathbb{R}^N}\frac{u(x)v(y)}{|x-y|^{N-\beta}}dxdy\right|\leq
C(N,\beta,p)\|u\|_p\|v\|_r.
\end{equation*}
If $p=r=\frac{2N}{N+\beta}$, then
\begin{equation*}
C(N,\beta,p)=C_\beta(N)=\pi^{\frac{N-\beta}{2}}\frac{\Gamma(\frac{\beta}{2})}{\Gamma(\frac{N+\beta}{2})}\left\{\frac{\Gamma(\frac{N}{2})}{\Gamma(N)}\right\}^{-\frac{\beta}{N}}.
\end{equation*}
\end{lemma}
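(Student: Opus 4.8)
The plan is to reduce the double integral to a convolution paired against $u$ by Hölder's inequality, and then to establish the needed mapping property of that convolution by real interpolation. Writing $K(x) = |x|^{-(N-\beta)}$, the left-hand side equals $\int_{\mathbb{R}^N} u(x)\,(K * v)(x)\,dx$, so by Hölder's inequality with conjugate exponents $p$ and $p'$ (where $1/p' = 1 - 1/p$) it suffices to prove a bound of the form $\|K * v\|_{p'} \le C\,\|v\|_r$. The scaling hypothesis $1/p + (N-\beta)/N + 1/r = 2$ translates, upon setting $s := N/(N-\beta)$ so that $1/s = (N-\beta)/N$, into the identity $1/p' + 1 = 1/r + 1/s$, which is exactly the exponent relation in the weak Young inequality. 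Thus the whole statement is equivalent to a single convolution estimate with the explicit kernel $K$.

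First I would record that $K$ lies in the weak Lebesgue space $L^{s,\infty}(\mathbb{R}^N)$, even though $K \notin L^s$: its distribution function is $|\{|K|>\lambda\}| = |\{|x| < \lambda^{-1/(N-\beta)}\}| = c_N\,\lambda^{-s}$, so $\sup_\lambda \lambda\,|\{|K|>\lambda\}|^{1/s}$ is a finite constant. Next I would prove the weak Young inequality $\|K * v\|_{p'} \le C\,\|K\|_{s,\infty}\,\|v\|_r$ for $1 < r, s, p' < \infty$ by the standard truncation argument: split the kernel at a height $\delta$ into a singular part $K\mathbf{1}_{|K|>\delta}$, supported near the origin and lying in $L^1$, and a tail part $K\mathbf{1}_{|K|\le\delta}$, lying in $L^{r'}$ (here $r' > s$ follows from $1/r + 1/s > 1$). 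Applying the ordinary Young inequality to each piece, using $L^1 * L^r \hookrightarrow L^r$ for the first and $L^{r'} * L^r \hookrightarrow L^\infty$ for the second, and optimizing $\delta = \delta(\lambda)$ yields a weak-type $(r,p')$ bound for $K*$; the Marcinkiewicz interpolation theorem then upgrades this to the strong-type bound at the intermediate exponent $p'$. Combined with the Hölder step this gives the inequality with some admissible constant $C(N,\beta,p)$.

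The main obstacle is the sharp constant $C_\beta(N)$ asserted for the conformal diagonal case $p = r = 2N/(N+\beta)$: the interpolation route produces a constant but says nothing about its optimality. For this one must instead follow Lieb's rearrangement approach — reducing to radially symmetric nonincreasing $u,v$ via the Riesz rearrangement inequality, exploiting the conformal (stereographic) invariance of the quadratic form to identify the extremals as the profiles $(1+|x|^2)^{-(N+\beta)/2}$, and then computing $C_\beta(N)$ explicitly in terms of Gamma functions. This sharp part is delicate and is the reason the lemma is classical; since it is only used here as an input, it is enough to cite \cite{Lieb-Loss 2001}.
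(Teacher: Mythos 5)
The paper gives no proof of this lemma: it is stated purely as a cited preliminary, with both the inequality and the sharp constant taken directly from \cite{Lieb-Loss 2001}. So there is nothing in the paper to compare against step by step; what matters is whether your reconstruction is sound, and it is. Your reduction via H\"older to the convolution bound $\|K\ast v\|_{p'}\leq C\|v\|_r$ with $K(x)=|x|^{-(N-\beta)}$, the exponent bookkeeping $1/p'+1=1/r+1/s$ with $s=N/(N-\beta)$, the observation that $K\in L^{s,\infty}$ but $K\notin L^s$, and the truncation-plus-Young argument giving weak type $(r,p')$, upgraded by Marcinkiewicz, is exactly the standard textbook route to the non-sharp statement. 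Two minor points of care: (i) Marcinkiewicz interpolation needs weak-type bounds at two pairs $(r_0,q_0)$, $(r_1,q_1)$ with $q_i>r_i$ straddling the target on the scaling line $1/q=1/r+1/s-1$; this is available precisely because your truncation argument works for every admissible $r$ on that line and $p'>r$ there (since $1/p'=1/r+1/s-1<1/r$), so the scheme closes, but it is worth saying explicitly. (ii) The H\"older/duality step should be justified first for nonnegative $u,v$ by Tonelli and then extended by taking absolute values, which is harmless. Finally, you are right that no interpolation argument can produce the sharp constant $C_\beta(N)$ in the diagonal case $p=r=2N/(N+\beta)$; deferring that to Lieb's rearrangement and conformal-invariance argument, i.e.\ to \cite{Lieb-Loss 2001}, is legitimate and is in effect exactly what the paper does for the entire lemma.
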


\begin{remark}\label{rek1.31}
(1). By the Hardy-Littlewood-Sobolev inequality above, for any $v\in
L^s(\mathbb{R}^N)$ with $s\in(1,N/\alpha)$, $I_\alpha\ast v\in
L^{\frac{Ns}{N-\alpha s}}(\mathbb{R}^N)$ and
\begin{equation*}
\|I_\alpha\ast v\|_{L^{\frac{Ns}{N-\alpha s}}}\leq C\|v\|_{L^s},
\end{equation*}
where $C>0$ is a constant depending only on $N,\ \alpha$ and $s$.

(2). By the Hardy-Littlewood-Sobolev inequality above and the
Sobolev embedding theorem, we obtain
\begin{equation}\label{e22.4}
\begin{split}
\int_{\mathbb{R}^N}(I_\beta\ast|u|^p)|u|^p\leq
C\left(\int_{\mathbb{R}^N}|u|^{\frac{2Np}{N+\beta}}\right)^{1+\beta/N}
\leq C\|u\|_{H^1(\mathbb{R}^N)}^{2p}
\end{split}
\end{equation}
for any $p\in \left[1+\beta/N,(N+\beta)/(N-2)\right]$ if $N\geq 3$
and $p\in \left[1+\beta/N,+\infty\right)$ if $N=1, 2$, where $C>0$
is a constant depending only on $N,\ \beta$ and $p$.
\end{remark}

The following  Gagliardo-Nirenberg inequality for the convolution
problem can be found in \cite{Feng-Yuan 2015} and
\cite{Moroz-Schaftingen JFA 2013}.

\begin{lemma}\label{lem cGN}
Let $N\geq 1$, $0<\beta<N$, $1+\beta/N<p<\infty$ if $N=1, 2$, and
$1+\beta/N<p<(N+\beta)/(N-2)$ if $N\geq 3$, then
\begin{equation*}
\left(\int_{\mathbb{R}^N}(I_\beta\ast|u|^p)|u|^p\right)^{\frac{1}{2p}}\leq
C_{\beta,p}\|\nabla u\|_2^{\eta_p}\|u\|_2^{1-\eta_p}.
\end{equation*}
The best constant $C_{\beta,p}$ is defined by
\begin{equation*}
(C_{\beta,p})^{2p}=\frac{2p}{2p-Np+N+\beta}\left(\frac{2p-Np+N+\beta}{Np-N-\beta}\right)^{(Np-N-\beta)/2}\|W_p\|_2^{2-2p},
\end{equation*}
where $W_p$ is a radially ground state solution of the elliptic
equation
\begin{equation*}
-\Delta W+W=(I_\beta\ast|W|^p)|W|^{p-2}W.
\end{equation*}
In particular,  in the $L^2$-critical case, i.e., $p=1+(2+\beta)/N$,
$(C_{\beta,p})^{2p}=p\|W_p\|_2^{2-2p}$.
\end{lemma}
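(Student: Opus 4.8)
The plan is to identify $C_{\beta,p}$ as the reciprocal of the infimum of a Weinstein-type quotient and to evaluate that infimum at the ground state $W_p$. Writing $\eta_p=\tfrac{N}{2}-\tfrac{N+\beta}{2p}$, one checks that throughout the admissible range ($1+\beta/N<p<(N+\beta)/(N-2)$ for $N\geq 3$, and $1+\beta/N<p<\infty$ for $N\leq 2$) one has $\eta_p\in(0,1)$. First I would set
$$\Phi(u):=\frac{\|\nabla u\|_2^{2p\eta_p}\,\|u\|_2^{2p(1-\eta_p)}}{\int_{\mathbb{R}^N}(I_\beta\ast|u|^p)|u|^p},\qquad u\in H^1(\mathbb{R}^N)\setminus\{0\},$$
and observe that the asserted inequality with \emph{some} finite constant is equivalent to $m:=\inf_{u\neq 0}\Phi(u)>0$, with $(C_{\beta,p})^{2p}=1/m$. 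Positivity and finiteness of $m$ follow by combining the Hardy--Littlewood--Sobolev inequality (Lemma \ref{lem HLS}, used as in (\ref{e22.4}) with exponent $s=\tfrac{2Np}{N+\beta}$) with the sharp Gagliardo--Nirenberg inequality of Lemma \ref{lem2.5}; a direct computation shows the resulting interpolation exponent is exactly $\eta_p$, so the inequality holds with a non-sharp constant. Note that $\Phi$ is invariant under the two-parameter scaling $u\mapsto\sigma\,u(\tau\cdot)$, which I will exploit for normalization.

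Next I would prove that $m$ is attained. Since $\int_{\mathbb{R}^N}(I_\beta\ast|u|^p)|u|^p\leq\int_{\mathbb{R}^N}(I_\beta\ast(|u|^*)^p)(|u|^*)^p$ by the Riesz rearrangement inequality, while $\|\nabla|u|^*\|_2\leq\|\nabla u\|_2$ by the P\'olya--Szeg\H{o} inequality and $\||u|^*\|_2=\|u\|_2$, we get $\Phi(|u|^*)\leq\Phi(u)$; hence a minimizing sequence may be chosen to consist of nonnegative, radially symmetric, nonincreasing functions. Using the scaling invariance I normalize so that $\|\nabla u_n\|_2=\|u_n\|_2=1$; the sequence is then bounded in $H^1(\mathbb{R}^N)$, and by the compact embedding $H_r^1(\mathbb{R}^N)\hookrightarrow L^s(\mathbb{R}^N)$ for $2<s<2^*$ (applied with $s=\tfrac{2Np}{N+\beta}$, which is strictly below $2^*$ precisely because $p<(N+\beta)/(N-2)$) the convolution integral converges along a subsequence to a strictly positive limit, so the weak limit $u_0$ is nonzero and attains $m$.

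I would then write the Euler--Lagrange equation for $u_0$, which has the form $-c_1\Delta u_0+c_2 u_0=c_3(I_\beta\ast|u_0|^p)|u_0|^{p-2}u_0$ with positive constants $c_1,c_2,c_3$ determined by $\|\nabla u_0\|_2$, $\|u_0\|_2$ and the convolution integral. A dilation and scalar multiplication $u_0\mapsto\sigma u_0(\tau\cdot)$ normalizes the coefficients to $1$, producing a solution $W$ of $-\Delta W+W=(I_\beta\ast|W|^p)|W|^{p-2}W$; since $\Phi$ is scaling invariant, $\Phi(W)=m$, and $W$ is identified with the radial ground state $W_p$.

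Finally I would evaluate $m=\Phi(W_p)$ in closed form. Testing the equation against $W_p$ yields the Nehari identity $\|\nabla W_p\|_2^2+\|W_p\|_2^2=\int_{\mathbb{R}^N}(I_\beta\ast|W_p|^p)|W_p|^p$, while the Poho\v{z}aev identity (obtained from $\tfrac{d}{d\lambda}\big|_{\lambda=1}$ of the action along $W_p(\cdot/\lambda)$, using that the convolution integral scales with weight $\lambda^{N+\beta}$) gives $\tfrac{N-2}{2}\|\nabla W_p\|_2^2+\tfrac{N}{2}\|W_p\|_2^2=\tfrac{N+\beta}{2p}\int_{\mathbb{R}^N}(I_\beta\ast|W_p|^p)|W_p|^p$. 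Eliminating the convolution integral between the two identities gives $\|\nabla W_p\|_2^2=\tfrac{\eta_p}{1-\eta_p}\|W_p\|_2^2$ and $\int_{\mathbb{R}^N}(I_\beta\ast|W_p|^p)|W_p|^p=\tfrac{1}{1-\eta_p}\|W_p\|_2^2$; substituting into $\Phi(W_p)$ and using $2p\eta_p=Np-N-\beta$ together with $2p(1-\eta_p)=2p-Np+N+\beta$ produces the stated expression for $(C_{\beta,p})^{2p}=1/\Phi(W_p)$, and the choice $\eta_p=1/p$ (equivalently $p=1+(2+\beta)/N$) reduces it to $(C_{\beta,p})^{2p}=p\|W_p\|_2^{2-2p}$. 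The main obstacle is the attainment step: excluding vanishing and loss of mass along the minimizing sequence, for which the rearrangement together with the radial compact embedding is precisely the tool.
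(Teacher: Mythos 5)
The paper does not actually prove this lemma: it is quoted verbatim from the literature, with the reader referred to \cite{Feng-Yuan 2015} and \cite{Moroz-Schaftingen JFA 2013} (see also Remark \ref{rmk2.3}). So there is no internal proof to compare against; what you have written is, in essence, the standard Weinstein-type argument \cite{Weinstein 1983} that those references carry out for the Choquard nonlinearity, and your version is correct in its main lines. The bookkeeping checks out: HLS (Lemma \ref{lem HLS}) reduces the convolution term to $\|u\|_s^{2p}$ with $s=\tfrac{2Np}{N+\beta}$, the Gagliardo--Nirenberg exponent for this $s$ is exactly $\eta_p$, and the constraint $2<s<2^*$ is precisely the stated range of $p$; the scaling invariance of $\Phi$ follows from $2p\eta_p=Np-N-\beta$; and the Nehari and Poho\v{z}aev identities (the latter consistent with Lemma \ref{lem3.9} at $\lambda=-1$, $\mu=0$) do yield $\|\nabla W_p\|_2^2=\tfrac{\eta_p}{1-\eta_p}\|W_p\|_2^2$ and $\int_{\mathbb{R}^N}(I_\beta\ast|W_p|^p)|W_p|^p=\tfrac{1}{1-\eta_p}\|W_p\|_2^2$, from which $1/\Phi(W_p)=\tfrac{1}{1-\eta_p}\bigl(\tfrac{1-\eta_p}{\eta_p}\bigr)^{p\eta_p}\|W_p\|_2^{2-2p}$ reproduces the stated constant, and $p\eta_p=1$ in the $L^2$-critical case gives $(C_{\beta,p})^{2p}=p\|W_p\|_2^{2-2p}$.

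Two points deserve to be made explicit. First, the compact embedding $H_r^1(\mathbb{R}^N)\hookrightarrow L^s(\mathbb{R}^N)$, $2<s<2^*$, which you invoke for the attainment step, is false for $N=1$, while the lemma is stated for all $N\geq 1$; your rearranged minimizing sequence is however radially nonincreasing, so the pointwise bound $|u_n(x)|\leq C|x|^{-N/2}\|u_n\|_{H^1}$ together with local Rellich compactness restores strong $L^s$ convergence in every dimension, and you should route the argument through this bound rather than through the Strauss embedding. Second, the phrase ``$W$ is identified with the radial ground state $W_p$'' hides a step: a priori the rescaled minimizer is only some solution of $-\Delta W+W=(I_\beta\ast|W|^p)|W|^{p-2}W$. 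But your two identities show that for \emph{every} $H^1$ solution the action equals $\tfrac{p-1}{2p(1-\eta_p)}\|W\|_2^2$ and $\Phi(W)=(1-\eta_p)\bigl(\tfrac{\eta_p}{1-\eta_p}\bigr)^{p\eta_p}\|W\|_2^{2p-2}$, so minimizing $\Phi$ among solutions is equivalent to minimizing the action; hence the rescaled minimizer is a ground state, and every ground state attains the best constant. This also shows all ground states share the same $L^2$-norm, which is exactly what Remark \ref{rmk2.3} records and what makes the formula for $C_{\beta,p}$ well defined despite possible nonuniqueness of $W_p$. With these two repairs, your proof is complete and agrees with the argument in the cited sources.
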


\begin{remark}\label{rmk2.3}
Note that $W_p$ may be not unique, but it has the same $L^2$-norm,
see \cite{Feng-Yuan 2015}. Hence, if we define $R_p=\|W_p\|_2$, then
$R_p$ is a constant.
\end{remark}

The following fact is used in this paper (see \cite{{Li-Ma-Zhang
2019},{Moroz-Schaftingen 2015}}). For the readers' convenience, we
give the proof here.

\begin{lemma}\label{lem33.3}
Assume that $N\geq 1$, $\alpha\in (0,N)$, $1+\alpha/N\leq p<\infty$
if $N=1,2$, and $1+\alpha/N\leq p\leq (N+\alpha)/(N-2)$ if $N\geq
3$. Let $\{u_k\}\subset H^1(\mathbb{R}^N)$ be a sequence satisfying
that $u_k\rightharpoonup u$ weakly in $H^1(\mathbb{R}^N)$. Then, for
any $\varphi\in H^1(\mathbb{R}^N)$,
\begin{equation*}
\int_{\mathbb{R}^N}(I_\alpha\ast |u_k|^p)|u_k|^{p-2}u_k\varphi\to
\int_{\mathbb{R}^N}(I_\alpha\ast |u|^p)|u|^{p-2}u\varphi
\end{equation*}
as $k\to \infty$.
\end{lemma}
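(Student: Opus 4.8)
Set $t=\tfrac{2N}{N+\alpha}$ and let $t'=\tfrac{2N}{N-\alpha}$ be its Hölder conjugate. The idea is to prove the stronger statement that the nonlinear term $g_k:=(I_\alpha\ast|u_k|^p)|u_k|^{p-2}u_k$ converges weakly in a suitable Lebesgue space to $g:=(I_\alpha\ast|u|^p)|u|^{p-2}u$, and then to pair this weak convergence against the fixed test function $\varphi$. First I would record the standing facts: a weakly convergent sequence in $H^1$ is bounded, so $\|u_k\|$ is bounded, and under the hypothesis $1+\alpha/N\le p\le (N+\alpha)/(N-2)$ one has $pt=\tfrac{2Np}{N+\alpha}\in[2,2^*]$, so $\{|u_k|^p\}$ is bounded in $L^t$ and, by Lemma~\ref{lem HLS} (Remark~\ref{rek1.31}(1)), $\{I_\alpha\ast|u_k|^p\}$ is bounded in $L^{t'}$. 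By Hölder the products $g_k$ are then bounded in $L^{q'}$, where $\tfrac1{q'}=\tfrac{N-\alpha}{2N}+\tfrac{(N+\alpha)(p-1)}{2Np}$, i.e. $q=\tfrac{2Np}{N+\alpha}\in[2,2^*]$; in particular $\varphi\in H^1\subset L^q$. Passing to a subsequence, Rellich's theorem gives $u_k\to u$ in $L^r_{loc}$ for every $r<2^*$ and hence $u_k\to u$ a.e. Since $L^{q'}$ is reflexive, $g_k\rightharpoonup G$ weakly in $L^{q'}$ along a further subsequence, and the whole matter reduces to identifying $G=g$, because then $\int g_k\varphi\to\int g\varphi$ follows from $\varphi\in L^q=(L^{q'})'$.

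To identify $G$ I would test against an arbitrary $\phi\in C_c^\infty(\mathbb R^N)$ and use the symmetry of the Riesz convolution: $\int g_k\phi=\int|u_k|^p\,\bigl(I_\alpha\ast(|u_k|^{p-2}u_k\phi)\bigr)$. Writing $K=\mathrm{supp}\,\phi$, one has $(p-1)t<2^*$ (the crucial inequality, checked below), so Rellich gives $u_k\to u$ strongly in $L^{(p-1)t}(K)$, whence $|u_k|^{p-2}u_k\to|u|^{p-2}u$ strongly in $L^t(K)$ by continuity of the Nemytskii map, and $|u_k|^{p-2}u_k\phi\to|u|^{p-2}u\phi$ strongly in $L^t$. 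By boundedness of $I_\alpha\ast\,\cdot\,:L^t\to L^{t'}$ this yields $I_\alpha\ast(|u_k|^{p-2}u_k\phi)\to I_\alpha\ast(|u|^{p-2}u\phi)$ strongly in $L^{t'}$, while $|u_k|^p\rightharpoonup|u|^p$ weakly in $L^t$ (bounded plus a.e. convergent). A weak-times-strong pairing then gives $\int g_k\phi\to\int|u|^p\,(I_\alpha\ast(|u|^{p-2}u\phi))=\int g\phi$. As $C_c^\infty$ is dense in $L^q$, we conclude $G=g$; since the limit does not depend on the subsequence, the full sequence satisfies $g_k\rightharpoonup g$, and the claim follows by pairing with $\varphi$.

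The main obstacle is the Hardy--Littlewood--Sobolev upper critical case $p=\bar p=(N+\alpha)/(N-2)$ with $N\ge3$, where $pt=2^*$ and no local compactness is available for $|u_k|^p$ itself, so the naive routes (strong $L^{2^*}_{loc}$ convergence of $|u_k|^{p-2}u_k\varphi$, or a.e. convergence of $I_\alpha\ast|u_k|^p$) break down precisely where concentration may occur. The device above circumvents this: one never needs strong convergence of $|u_k|^p$, only its weak convergence paired with the \emph{strongly} convergent factor $I_\alpha\ast(|u_k|^{p-2}u_k\phi)$, and the latter is available because the extra factor $\phi\in C_c^\infty$ confines the analysis to a compact set while leaving the relevant integrability exponent strictly subcritical. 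Indeed, at $p=\bar p$ one computes $(p-1)t=\tfrac{2N(\alpha+2)}{(N-2)(N+\alpha)}$ and $\tfrac{(p-1)t}{2^*}=\tfrac{\alpha+2}{N+\alpha}<1$, so Rellich applies with room to spare; the cases $N=1,2$ are genuinely subcritical ($2^*=\infty$) and only easier.
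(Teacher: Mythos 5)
Your proof is correct and follows essentially the same route as the paper's: both use the symmetry of the Riesz potential to move the convolution onto the factor carrying the compactly supported test function, combine local Rellich compactness with the Hardy--Littlewood--Sobolev inequality to get strong $L^{\frac{2N}{N-\alpha}}$ convergence of $I_\alpha\ast\left(|u_k|^{p-2}u_k\phi\right)$, pair this against the weak $L^{\frac{2N}{N+\alpha}}$ convergence of $|u_k|^p$, and finish by a density argument. Your extra packaging (the uniform $L^{q'}$ bound on $g_k$ and weak compactness, plus the subsequence-independence remark) only makes explicit the uniform boundedness that the paper's closing step ``$C_c^\infty$ is dense in $H^1$'' implicitly relies on; it is a tidier write-up of the same argument rather than a different one.
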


\begin{proof}
Up to a subsequence, $\{u_k\}$ is bounded in $H^1(\mathbb{R}^N)$,
$u_k\rightharpoonup u$ weakly in $H^1(\mathbb{R}^N)$ and $u_k\to u$
a.e. in $\mathbb{R}^N$.  By the Sobolev embedding theorem, $\{u_k\}$
is bounded in $L^2(\mathbb{R}^N) \cap L^{2^*}(\mathbb{R}^N)$.
Therefore, the sequence $\{|u_k|^p\}$ is bounded in
$L^{\frac{2N}{N+\alpha}}(\mathbb{R}^N)$, and then
\begin{align*}
|u_k|^p & \rightharpoonup  |u|^p \ \text{weakly\ in\ }
L^{\frac{2N}{N+\alpha}}(\mathbb{R}^N).
\end{align*}
By the Rellich theorem, $u_k\to u$ strongly in
$L_{\mathrm{loc}}^r(\mathbb{R}^N)$ for $r\in[1,2^*)$ and then
$|u_k|^{p-2}u_k \rightarrow  |u|^{p-2}u \ \text{strongly\ in\ }
L_{\mathrm{loc}}^{\frac{2Np\delta}{(p-1)(N+\alpha)}}(\mathbb{R}^N)$
with $\delta\in ((N+\alpha)/(2N),1)$ (see Theorem A.2 in
\cite{Willem 1996}). Hence, $|u_k|^{p-2}u_k \varphi\rightarrow
|u|^{p-2}u\varphi $ strongly in
$L^{\frac{2N}{N+\alpha}}(\mathbb{R}^N)$ for any $\varphi\in
C_c^{\infty}(\mathbb{R}^N)$. By Remark \ref{rek1.31}, we have
\begin{equation*}
I_\alpha\ast\left(|u_k|^{p-2}u_k\varphi\right)\to
I_\alpha\ast\left(|u|^{p-2}u\varphi\right)
\end{equation*}
strongly in $L^{\frac{2N}{N-\alpha}}(\mathbb{R}^N)$. Thus,
\begin{equation*}\begin{split}
&\int_{\mathbb{R}^N}(I_\alpha\ast |u_k|^p)|u_k|^{p-2}u_k\varphi-
\int_{\mathbb{R}^N}(I_\alpha\ast |u|^p)|u|^{p-2}u\varphi\\
=&\int_{\mathbb{R}^N}|u_k|^p\left(I_\alpha\ast
(|u_k|^{p-2}u_k\varphi)\right)-
\int_{\mathbb{R}^N} |u|^p\left(I_\alpha\ast(|u|^{p-2}u\varphi)\right)\\
=& \int_{\mathbb{R}^N}|u_k|^p\left(I_\alpha\ast
(|u_k|^{p-2}u_k\varphi)- I_\alpha\ast(|u|^{p-2}u\varphi)\right)\\
&\qquad+
\int_{\mathbb{R}^N}(|u_k|^p-|u|^p)\left(I_\alpha\ast(|u|^{p-2}u\varphi)\right)\\
\to &0
\end{split}
\end{equation*}
as $k\to \infty$. Since $C_c^{\infty}(\mathbb{R}^N)$ is dense in
$H^1(\mathbb{R}^N)$, the proof is complete.
\end{proof}

The following Poho\v{z}aev identity is cited from \cite{Li-Ma 2020},
where the proof is given for $N\geq 3$ and $\lambda>0$ but it
clearly extends to $N=1,2$ and $\lambda\in \mathbb{R}$.
\begin{lemma}\label{lem3.9}
Let $N\geq 1$, $\alpha\in (0,N)$, $\lambda\in \mathbb{R}$, $\mu\in
\mathbb{R}$, $p\in [1+\alpha/N,+\infty)$ and $q\in [2,+\infty)$ for
$N=1,2$, $p\in [1+\alpha/N,(N+\alpha)/(N-2)]$ and $q\in
[2,2N/(N-2)]$ for $N\geq 3$. If $u\in H^1(\mathbb{R}^N)$ is a
solution to (\ref{e1.1}), then $u$ satisfies the Poho\v{z}aev
identity
\begin{align*}
\frac{N-2}{2}\int_{\mathbb{R}^N}|\nabla u|^2
=\frac{N\lambda}{2}\int_{\mathbb{R}^N}|u|^2+\frac{N+\alpha}{2p}\int_{\mathbb{R}^N}(I_{\alpha}\ast|u|^{p})|u|^{p}+\frac{\mu
N}{q}\int_{\mathbb{R}^N}|u|^{q}.
\end{align*}
\end{lemma}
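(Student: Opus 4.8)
The plan is to read the identity off as the vanishing, at dilation parameter $t=1$, of the derivative of the action functional $J$ along the spatial rescaling $u_t(x):=u(x/t)$. Since $u$ solves (\ref{e1.1}), it is a critical point of $J$; and the path $t\mapsto u_t$ has $\partial_t u_t\big|_{t=1}=-x\cdot\nabla u$, so formally $\frac{d}{dt}J(u_t)\big|_{t=1}=J'(u)[-x\cdot\nabla u]=0$. Unwinding this single scalar equation will produce exactly the stated identity, with the nonlocal term handled automatically by the homogeneity of the Riesz kernel. I prefer this route over the direct multiplier computation because every term reduces to a pure power of $t$ and the complex-valued nature of $u$ causes no trouble (all terms involve $|u|$).

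Concretely, I would record the scalings
\begin{equation*}
\|\nabla u_t\|_2^2=t^{N-2}\|\nabla u\|_2^2,\qquad \|u_t\|_2^2=t^N\|u\|_2^2,\qquad \|u_t\|_q^q=t^N\|u\|_q^q,
\end{equation*}
and, changing variables $x'=x/t$, $y'=y/t$ and using that $I_\alpha$ is homogeneous of degree $-(N-\alpha)$,
\begin{equation*}
\int_{\mathbb{R}^N}(I_\alpha\ast|u_t|^p)|u_t|^p=t^{N+\alpha}\int_{\mathbb{R}^N}(I_\alpha\ast|u|^p)|u|^p.
\end{equation*}
Substituting into $J$ gives
\begin{equation*}
J(u_t)=\frac{t^{N-2}}{2}\|\nabla u\|_2^2-\frac{\lambda t^N}{2}\|u\|_2^2-\frac{t^{N+\alpha}}{2p}\int_{\mathbb{R}^N}(I_\alpha\ast|u|^p)|u|^p-\frac{\mu t^N}{q}\|u\|_q^q,
\end{equation*}
and differentiating at $t=1$, then setting the result equal to zero, yields
\begin{equation*}
\frac{N-2}{2}\|\nabla u\|_2^2=\frac{N\lambda}{2}\|u\|_2^2+\frac{N+\alpha}{2p}\int_{\mathbb{R}^N}(I_\alpha\ast|u|^p)|u|^p+\frac{\mu N}{q}\|u\|_q^q,
\end{equation*}
which is the claim.

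The analytically delicate point—and the main obstacle—is not this algebra but the justification of $\frac{d}{dt}J(u_t)\big|_{t=1}=0$, since the test function $x\cdot\nabla u$ need not lie in $H^1(\mathbb{R}^N)$ for a generic $H^1$ solution. I would make the identity rigorous by the standard cutoff-multiplier argument: pair (\ref{e1.1}) against $\psi_R(x)\,(x\cdot\nabla u)$, where $\psi_R(x)=\psi(x/R)$ with $\psi\in C_c^\infty(\mathbb{R}^N)$ equal to $1$ near the origin, integrate by parts, take real parts (so that, e.g., $\mathrm{Re}\,[\,|u|^{p-2}u\,\overline{x\cdot\nabla u}\,]=\tfrac1p\,x\cdot\nabla|u|^p$), and let $R\to\infty$. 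This first requires regularity and decay of $u$: by Lemma \ref{lem HLS} and Remark \ref{rek1.31} the right-hand side of (\ref{e1.1}) lies in suitable Lebesgue spaces, so an elliptic bootstrap gives $u\in W^{2,2}_{\mathrm{loc}}(\mathbb{R}^N)$ (indeed $C^2$), together with the decay of $u$ and $\nabla u$ at infinity needed to kill the boundary and cutoff-remainder terms in the limit. The two places to watch are the Laplacian term, whose integration by parts must leave exactly $-\frac{N-2}{2}\|\nabla u\|_2^2$ once the remainders vanish, and the nonlocal term, where the symmetry and homogeneity of $I_\alpha$ together with the Hardy–Littlewood–Sobolev bound of Lemma \ref{lem HLS} control the passage to the limit and reproduce the exponent $N+\alpha$ found above. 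The extension to $N=1,2$ and to $\lambda\in\mathbb{R}$ is transparent in this scheme, since neither the scaling computation nor the cutoff estimates use the sign of $\lambda$ or $N\ge 3$.
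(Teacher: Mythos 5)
The paper does not actually prove Lemma \ref{lem3.9}: it is quoted from \cite{Li-Ma 2020}, with the remark that the proof there (written for $N\geq 3$, $\lambda>0$) extends to $N=1,2$ and $\lambda\in\mathbb{R}$. So there is no in-paper argument to compare against line by line; what your proposal does is supply the standard proof, and it is in substance the same strategy as in the cited reference and in \cite{Moroz-Schaftingen JFA 2013}: the dilation computation identifies the identity (your scaling exponents, including $t^{N+\alpha}$ for the nonlocal term, are correct), and rigor comes from pairing (\ref{e1.1}) with the cut-off multiplier $\psi_R\,(x\cdot\nabla\bar u)$, taking real parts, integrating by parts, and letting $R\to\infty$. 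You also correctly identify where the real work lies, namely that $x\cdot\nabla u$ is not an admissible $H^1$ test function, so the formal criticality argument must be replaced by the multiplier computation.

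Two points in your justification step deserve sharpening. First, the regularity claim ``an elliptic bootstrap gives $u\in W^{2,2}_{\mathrm{loc}}$ (indeed $C^2$)'' is exactly where the hypotheses bite: the lemma allows the critical exponents $p=(N+\alpha)/(N-2)$ and $q=2N/(N-2)$ for $N\geq 3$, and at these exponents a naive bootstrap does not start; one needs a Brezis--Kato type iteration (as in \cite{Moroz-Schaftingen JFA 2013}, or Theorem 2.1 of \cite{Li-Ma 2020}) to first obtain $u\in L^s_{\mathrm{loc}}$ for all $s<\infty$, after which Calder\'on--Zygmund and Schauder estimates apply. Second, pointwise decay of $u$ and $\nabla u$ at infinity is not needed, and asking for it adds unnecessary work: with $\psi_R(x)=\psi(x/R)$ one has $|x\cdot\nabla\psi_R|\leq C$ uniformly in $R$, and for the nonlocal term the symmetrized kernel factor satisfies $|\psi_R(x)x-\psi_R(y)y|\leq C|x-y|$ uniformly, so every remainder is dominated by a fixed $L^1$ quantity ($|\nabla u|^2$, $|u|^2$, $|u|^q$, or $(I_\alpha\ast|u|^p)|u|^p$, the last finite by Lemma \ref{lem HLS}) and vanishes by dominated convergence. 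With these two adjustments your outline is a complete and correct proof, valid for all $N\geq 1$ and $\lambda\in\mathbb{R}$, which is precisely the extension the paper asserts without proof.
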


\begin{lemma}\label{lem3.8}
Assume the conditions in Lemma \ref{lem3.9} hold. If $u\in
H^1(\mathbb{R}^N)$ is a solution to (\ref{e1.1}), then $P(u)=0$.
\end{lemma}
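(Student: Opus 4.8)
The plan is to obtain $P(u)=0$ as a linear combination of two integral identities satisfied by any solution $u\in H^1(\mathbb{R}^N)$ of (\ref{e1.1}). The first is the Poho\v{z}aev identity already recorded in Lemma \ref{lem3.9}. The second is the Nehari-type identity obtained by pairing the weak formulation of (\ref{e1.1}) with $u$ itself; since we work with complex-valued functions, I would test against $\bar u$ and take real parts, using that $|u|^p$ and $|u|^q$ are real so that the Choquard and power terms contribute $\int_{\mathbb{R}^N}(I_\alpha\ast|u|^p)|u|^p$ and $\int_{\mathbb{R}^N}|u|^q$. This produces
\begin{equation*}
\int_{\mathbb{R}^N}|\nabla u|^2=\lambda\int_{\mathbb{R}^N}|u|^2+\int_{\mathbb{R}^N}(I_\alpha\ast|u|^p)|u|^p+\mu\int_{\mathbb{R}^N}|u|^q.
\end{equation*}

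To carry out the elimination, I would abbreviate $A:=\int_{\mathbb{R}^N}(I_\alpha\ast|u|^p)|u|^p$ and $B:=\int_{\mathbb{R}^N}|u|^q$. The testing identity gives $\lambda\int_{\mathbb{R}^N}|u|^2=\int_{\mathbb{R}^N}|\nabla u|^2-A-\mu B$, which I insert into Lemma \ref{lem3.9} to remove the Lagrange term $\frac{N\lambda}{2}\int_{\mathbb{R}^N}|u|^2$. Collecting the coefficients of $\int_{\mathbb{R}^N}|\nabla u|^2$, $A$ and $\mu B$ then yields
\begin{equation*}
-\int_{\mathbb{R}^N}|\nabla u|^2=-\Big(\frac{N}{2}-\frac{N+\alpha}{2p}\Big)A-\mu\Big(\frac{N}{2}-\frac{N}{q}\Big)B.
\end{equation*}
Since $\frac{N}{2}-\frac{N+\alpha}{2p}=\eta_p$ and $\frac{N}{2}-\frac{N}{q}=\gamma_q$ by the definitions in (\ref{e1.25}), this is exactly $\int_{\mathbb{R}^N}|\nabla u|^2=\eta_p A+\mu\gamma_q B$, that is, $P(u)=0$.

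I expect no genuine analytic obstacle here, as the argument is a purely algebraic manipulation of two identities. The only point requiring care is the justification of the testing identity, namely that $u$ is an admissible test function and that every integral appearing is finite. This follows from $u\in H^1(\mathbb{R}^N)$ together with the growth restrictions on $p$ and $q$ assumed in Lemma \ref{lem3.9}: the Hardy--Littlewood--Sobolev inequality (Lemma \ref{lem HLS} and the estimate (\ref{e22.4})) controls the Choquard term, while $q\le 2^*$ and the Sobolev embedding control the power term. Consequently $E\in C^1$ and the weak formulation may legitimately be paired with $u$, which completes the derivation of $P(u)=0$.
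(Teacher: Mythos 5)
Your proposal is correct and is essentially identical to the paper's own proof: the paper likewise multiplies (\ref{e1.1}) by $u$, integrates to get the Nehari-type identity, and combines it with the Poho\v{z}aev identity of Lemma \ref{lem3.9} to eliminate $\lambda$ and conclude $P(u)=0$. Your version merely spells out the algebraic elimination and the complex-valued/integrability details that the paper leaves implicit.
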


\begin{proof}
Multiplying (\ref{e1.1}) by $u$ and integrating over $\mathbb{R}^N$,
we derive
\begin{equation*}
\int_{\mathbb{R}^N}|\nabla
u|^2=\lambda\int_{\mathbb{R}^N}|u|^2+\int_{\mathbb{R}^N}(I_{\alpha}\ast|u|^{p})|u|^{p}+\mu\int_{\mathbb{R}^N}|u|^{q},
\end{equation*}
which combines with the Poho\v{z}aev identity from Lemma
\ref{lem3.9} gives that $P(u)=0$.
\end{proof}

\section{Proof of Theorem \ref{thm1.1}}
\setcounter{section}{3} \setcounter{equation}{0}

In this section, we first study the properties of $c^{po}$ defined
in (\ref{e1.15}), and then give the proof of Theorem \ref{thm1.1}.

For $u\in S_a$ and $s\in \mathbb{R}$, define
\begin{equation}\label{e3.1}
(s\star u)(x):=e^{\frac{N}{2}s}u(e^sx),\ x\in \mathbb{R}^N.
\end{equation}
Then $s\star u\in S_a$. Consider the fiber maps
\begin{equation}\label{e3.4}
\begin{split}
\Psi_u(s):=E(s\star u)&=\frac{1}{2}e^{2s}\int_{\mathbb{R}^N}|\nabla
u|^2-\frac{1}{2p}e^{(Np-N-\alpha)s}\int_{\mathbb{R}^N}(I_\alpha\ast|u|^p)|u|^{p}\\
&\qquad\qquad-\frac{\mu}{q}e^{\left(\frac{N}{2}q-N\right)s}\int_{\mathbb{R}^N}|u|^{q}
\end{split}
\end{equation}
and
\begin{equation*}
\begin{split}
P(s\star u)&=e^{2s}\int_{\mathbb{R}^N}|\nabla
u|^2-\eta_pe^{(Np-N-\alpha)s}\int_{\mathbb{R}^N}(I_\alpha\ast|u|^p)|u|^{p}\\
&\qquad\qquad-\mu\gamma_qe^{\left(\frac{N}{2}q-N\right)s}\int_{\mathbb{R}^N}|u|^{q}.
\end{split}
\end{equation*}
We have the following lemma.

\begin{lemma}\label{lem3.6}
Assume the conditions in Theorem \ref{thm1.1} hold. Then for every
$u\in S_a$, there exists a unique $s_u\in \mathbb{R}$ such that
$P(s_u\star u)=0$. $s_u$ is the unique critical point of the
function $\Psi_u$, and is a strict maximum point at positive level.
Moreover, $P(u)\leq 0$ is equivalent to $s_u\leq 0$.
\end{lemma}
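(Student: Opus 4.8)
The plan is to analyze the function $\Psi_u(s)=E(s\star u)$ from \eqref{e3.4} as a function of the single real variable $s$, and to observe that $P(s\star u)=\Psi_u'(s)$. Writing
\begin{equation*}
A:=\int_{\mathbb{R}^N}|\nabla u|^2,\quad
B:=\int_{\mathbb{R}^N}(I_\alpha\ast|u|^p)|u|^p,\quad
C:=\mu\int_{\mathbb{R}^N}|u|^q,
\end{equation*}
we have
\begin{equation*}
\Psi_u(s)=\tfrac12 e^{2s}A-\tfrac{1}{2p}e^{(Np-N-\alpha)s}B-\tfrac1q e^{(\frac N2 q-N)s}C,
\end{equation*}
and differentiating gives exactly $\Psi_u'(s)=P(s\star u)$ with the three exponents $2$, $Np-N-\alpha=2p\eta_p$, and $\frac N2 q-N=q\gamma_q$. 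First I would record the crucial ordering of exponents coming from the hypotheses of Theorem \ref{thm1.1}. Since $p>p^*=1+(2+\alpha)/N$, one has $Np-N-\alpha>2$, and since $q\geq q^*=2+4/N$, one has $\frac N2 q-N\geq 2$, with equality only when $q=q^*$. Thus both nonlinear exponents strictly exceed (or, in the borderline $q=q^*$ case, equal) the exponent $2$ of the leading quadratic term.

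The key step is to establish existence and uniqueness of the critical point by examining the sign of $\Psi_u'$. The idea is that as $s\to-\infty$ the term $\frac12 e^{2s}A$ dominates (all exponents are $\geq 2$ and $A>0$ since $u\not\equiv 0$ on $S_a$), so $\Psi_u(s)\to 0^+$ and $\Psi_u$ is increasing for $s$ very negative; as $s\to+\infty$ the term with the largest exponent $Np-N-\alpha>2$ dominates with a negative coefficient, so $\Psi_u(s)\to-\infty$. This already forces at least one maximum at a positive level. For uniqueness I would show $\Psi_u'$ has exactly one zero: factoring out the positive quantity $e^{2s}$,
\begin{equation*}
\Psi_u'(s)=e^{2s}\Big(A-\eta_p e^{(Np-N-\alpha-2)s}B-\mu\gamma_q e^{(\frac N2 q-N-2)s}C\Big),
\end{equation*}
and the bracketed function $g(s):=A-\eta_p e^{a_1 s}B-\mu\gamma_q e^{a_2 s}C$ with $a_1:=Np-N-\alpha-2>0$ and $a_2:=\frac N2 q-N-2\geq 0$ is strictly decreasing in $s$ (each exponential term is nondecreasing with nonnegative coefficient, and at least $a_1>0$ makes it strictly so), running from $A>0$ down to $-\infty$. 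Hence $g$ has a unique zero $s_u$, which is the unique critical point of $\Psi_u$, and since $\Psi_u'>0$ before $s_u$ and $\Psi_u'<0$ after, it is a strict global maximum; its value is positive because $\Psi_u(s_u)\geq \Psi_u(s)>0$ for small $s$. In the critical case $q=q^*$ where $a_2=0$, the constraint $\mu a^{4/N}<(a_N^*)^{4/N}$ from Theorem \ref{thm1.1} guarantees that the $L^2$-critical term does not overwhelm the gradient term: by the sharp Gagliardo-Nirenberg inequality of Lemma \ref{lem2.5}, $\mu\gamma_q C=\mu\gamma_q\|u\|_q^q\leq \mu\gamma_q C_{N,q}^q a^{4/N}A<A$, so $g(s)>0$ remains possible for small $s$ and the argument goes through.

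Finally, the equivalence $P(u)\leq 0\iff s_u\leq 0$ follows immediately from monotonicity: $P(u)=\Psi_u'(0)=e^{0}g(0)$ has the same sign as $g(0)$, and since $g$ is strictly decreasing with unique zero at $s_u$, we have $g(0)\leq 0$ iff $0\geq s_u$. I expect the main obstacle to be the borderline $q=q^*$ case, where $a_2=0$ and the $L^2$-critical term contributes a constant rather than a genuinely subdominant exponential; here one must invoke the sharp Gagliardo-Nirenberg inequality together with the mass constraint $\mu a^{4/N}<(a_N^*)^{4/N}$ to ensure $g(-\infty)=A-\mu\gamma_q C>0$ and thereby preserve the sign change, whereas for $q>q^*$ the argument is purely elementary.
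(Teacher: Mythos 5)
Your proposal is correct and follows essentially the same route as the paper: factor $P(s\star u)=\Psi_u'(s)=e^{2s}g(s)$, observe that $g$ is strictly decreasing from a positive limit to $-\infty$ (using $Np-N-\alpha-2>0$, and in the borderline case $q=q^*$ invoking the sharp Gagliardo--Nirenberg inequality with $\mu a^{4/N}<(a_N^*)^{4/N}$ to get $\mu\gamma_q\|u\|_q^q<\|\nabla u\|_2^2$), and deduce uniqueness of the zero, the strict maximum at positive level, and the sign equivalence. No gaps; this matches the paper's argument step for step.
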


\begin{proof}
Set $P(s\star u)=e^{2s}g_u(s)$, where
\begin{equation*}
\begin{split}
g_u(s)&=\int_{\mathbb{R}^N}|\nabla
u|^2-\eta_pe^{(Np-N-\alpha-2)s}\int_{\mathbb{R}^N}(I_\alpha\ast|u|^p)|u|^{p}\\
&\qquad \qquad
-\mu\gamma_qe^{\left(\frac{N}{2}q-N-2\right)s}\int_{\mathbb{R}^N}|u|^{q}.
\end{split}
\end{equation*}
If $q>q^*$, we have $\frac{N}{2}q-N-2>0$. If $q=q^*$ and $\mu
a^{{4}/{N}}<(a_N^*)^{{4}/{N}}$, we have $\frac{N}{2}q-N-2=0$ and by
the Gagliardo-Nirenberg inequality (Lemma \ref{lem2.5}),
\begin{equation*}
\mu\gamma_q\int_{\mathbb{R}^N}|u|^{q}dx\leq
\mu\gamma_qC_{N,q}^qa^{q(1-\gamma_q)}\|\nabla u\|_2^2<\|\nabla
u\|_2^2.
\end{equation*}
Since $Np-N-\alpha-2>0$, so in both cases, $g_u(s)>0$ for $s\ll0$,
$g_u(s)<0$ for $s\gg0$, and $g_u'(s)<0$ for $s\in \mathbb{R}$. Thus,
$g_u(s)$ has a unique zero $s_u$ as well as $P(s\star u)$. It is
obvious that $P(u)\leq 0\Leftrightarrow s_u\leq 0$.

By direct calculations, we have $\Psi_u'(s)=P(s\star u)$,
$\lim_{s\to -\infty}\Psi_u(s)=0$, $\Psi_u(s)>0$ for $s\ll0$ and
$\lim_{s\to +\infty}\Psi_u(s)=-\infty$. Thus, $s_u$ is the unique
critical point of $\Psi_u(s)$ and $\Psi_u(s_u)=\max_{s\in
\mathbb{R}}\Psi_u(s)>0$.
\end{proof}

\begin{lemma}\label{lem3.1}
Assume the conditions in Theorem \ref{thm1.1} hold.  Then
$c^{po}>0.$
\end{lemma}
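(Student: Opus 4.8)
The plan is to eliminate the gradient term using the Pohožaev constraint and thereby rewrite $E$ on $\mathcal{P}$ as a manifestly nonnegative quantity, and then to rule out the infimum being zero by establishing a uniform positive lower bound on the $L^2$-norm of the gradient over $\mathcal{P}$. Write $A:=\int_{\mathbb{R}^N}|\nabla u|^2$, $B:=\int_{\mathbb{R}^N}(I_\alpha\ast|u|^p)|u|^p$ and $C:=\int_{\mathbb{R}^N}|u|^q$. For $u\in\mathcal{P}$ the relation $P(u)=0$ reads $A=\eta_p B+\mu\gamma_q C$, and substituting this into $E$ cancels $A$ and gives
\begin{equation*}
E(u)=\frac{Np-N-\alpha-2}{4p}\,B+\mu\,\frac{Nq-2N-4}{4q}\,C.
\end{equation*}
Since $p>p^*$ forces $Np-N-\alpha-2>0$ and $q\ge q^*$ forces $Nq-2N-4\ge0$, both coefficients are nonnegative and the first is strictly positive, so $E\ge0$ on $\mathcal{P}$ and the only issue is strict positivity of the infimum.

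Next I would show there is $\delta>0$ with $A=\|\nabla u\|_2^2\ge\delta$ for every $u\in\mathcal{P}$. Starting again from $A=\eta_p B+\mu\gamma_q C$, I bound $B$ by the convolution Gagliardo-Nirenberg inequality (Lemma \ref{lem cGN}) when $p<\bar p$, or by the Sobolev-type inequality defining $S_\alpha$ (using $\eta_{\bar p}=1$) when $p=\bar p$, and I bound $C$ by the Gagliardo-Nirenberg inequality (Lemma \ref{lem2.5}). Setting $t:=A$ and using $\|u\|_2=a$, this yields an inequality of the form $t\le K_1 t^{p\eta_p}+K_2 t^{q\gamma_q/2}$, where $p\eta_p=\tfrac{Np-N-\alpha}{2}>1$ and $q\gamma_q/2=\tfrac{Nq-2N}{4}\ge1$, with equality exactly when $q=q^*$. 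Dividing by $t>0$ gives $1\le K_1 t^{p\eta_p-1}+K_2 t^{q\gamma_q/2-1}$; when $q>q^*$ both exponents on the right are positive, so the right side tends to $0$ as $t\to0^+$ and a positive lower bound $t\ge\delta$ follows, while when $q=q^*$ the second term is simply $K_2$ and one obtains $t^{p\eta_p-1}\ge(1-K_2)/K_1$.

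The delicate point, and the main obstacle, is the borderline case $q=q^*$, where the local term scales exactly like the gradient term. Here one must check that the constant $K_2=\mu\gamma_{q^*}C_{N,q^*}^{q^*}a^{q^*(1-\gamma_{q^*})}$ equals $\mu a^{4/N}/(a_N^*)^{4/N}$, using $C_{N,q^*}^{q^*}=q^*/\bigl(2(a_N^*)^{4/N}\bigr)$ from Lemma \ref{lem2.5} and $\gamma_{q^*}=N/(N+2)$, $q^*(1-\gamma_{q^*})=4/N$; the hypothesis $\mu a^{4/N}<(a_N^*)^{4/N}$ then guarantees $K_2<1$, which is precisely what makes both the lower bound $t\ge\delta$ and the next step work. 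Finally I would convert the gradient bound into a bound on $E$: for $q>q^*$ write $E=c_1B+c_2C\ge\min\{c_1/\eta_p,\ c_2/(\mu\gamma_q)\}\,(\eta_p B+\mu\gamma_q C)=\min\{\cdots\}\,A\ge\min\{\cdots\}\,\delta>0$; for $q=q^*$ one has $E=c_1B$, and combining $\mu\gamma_{q^*}C\le K_2 A$ with $A=\eta_pB+\mu\gamma_{q^*}C$ gives $\eta_pB\ge(1-K_2)A\ge(1-K_2)\delta$, whence $E\ge c_1(1-K_2)\delta/\eta_p>0$. In all cases $c^{po}=\inf_{\mathcal{P}}E>0$, as claimed.
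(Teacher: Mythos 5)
Your proof is correct and follows essentially the same route as the paper: rewrite $E$ on $\mathcal{P}$ via the constraint $P(u)=0$ (your formula is exactly the paper's identity (3.2)), use the Gagliardo--Nirenberg inequalities (and the $S_\alpha$ Sobolev-type bound with $\eta_{\bar p}=1$ when $p=\bar p$) to get the self-improving inequality $t\le K_1t^{p\eta_p}+K_2t^{q\gamma_q/2}$ and hence a uniform lower bound on $\|\nabla u\|_2^2$, splitting the cases $q>q^*$ and $q=q^*$ with $K_2=\mu a^{4/N}/(a_N^*)^{4/N}<1$. Your write-up is in fact slightly more careful than the paper's in converting the gradient lower bound into a lower bound on $E$ (the explicit $\min\{c_1/\eta_p,\,c_2/(\mu\gamma_q)\}$ step), but the ideas and key inequalities coincide.
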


\begin{proof}
By Lemma \ref{lem3.6}, $\mathcal{P}\neq\emptyset$.

\textbf{Case 1} ($p\neq \bar{p}$).  For any $u\in \mathcal{P}$, by
the Gagliardo-Nirenberg inequality (Lemmas \ref{lem2.5} and \ref{lem
cGN}), we have
\begin{equation}\label{e3.11}
\begin{split}
\int_{\mathbb{R}^N}|\nabla
u|^2&=\eta_p\int_{\mathbb{R}^N}(I_\alpha\ast|u|^p)|u|^{p}
+\mu \gamma_q\int_{\mathbb{R}^N}|u|^{q}\\
&\leq \eta_p C_{\alpha,p}^{2p}\|u\|_2^{2p(1-\eta_p)}\|\nabla
u\|_2^{2p\eta_p}+\mu
\gamma_qC_{N,q}^q\|u\|_2^{q(1-\gamma_q)}\|\nabla u\|_2^{q\gamma_q}\\
&=\mu \gamma_qC_{N,q}^qa^{q(1-\gamma_q)}\|\nabla
u\|_2^{q\gamma_q}+\eta_p  C_{\alpha,p}^{2p}a^{2p(1-\eta_p)}\|\nabla
u\|_2^{2p\eta_p}.
\end{split}
\end{equation}

If $q>q^*$, then $q\gamma_q>2$. Since $2p\eta_p>2$, (\ref{e3.11})
implies that there exists a constant $C>0$ such that $\|\nabla
u\|_2^2\geq C$. Consequently,
\begin{equation*}
\eta_p\int_{\mathbb{R}^N}(I_\alpha\ast|u|^p)|u|^{p} +\mu
\gamma_q\int_{\mathbb{R}^N}|u|^{q}\geq C.
\end{equation*}

If $q=q^*$ and $\mu a^{{4}/{N}}<(a_N^*)^{{4}/{N}}$, then
 $q\gamma_q=2$ and $\mu
\gamma_qC_{N,q}^qa^{q(1-\gamma_q)}<1$. Since $2p\eta_p>2$,
(\ref{e3.11}) implies that there exists a constant $C>0$ such that
$\|\nabla u\|_2^2\geq C$. Thus, it follows from (\ref{e3.11}) that
\begin{equation*}
\begin{split}
\eta_p\int_{\mathbb{R}^N}(I_\alpha\ast|u|^p)|u|^{p}&\geq \left(1-\mu
\gamma_qC_{N,q}^qa^{q(1-\gamma_q)}\right)\|\nabla u\|_2^2\\
&\geq C\left(1-\mu \gamma_qC_{N,q}^qa^{q(1-\gamma_q)}\right).
\end{split}
\end{equation*}

Any way, there always exists $C_1>0$ such that for any $u\in
\mathcal{P}$,
\begin{equation}\label{e3.2}
E(u)=\left(\frac{\eta_p}{2}-\frac{1}{2p}\right)\int_{\mathbb{R}^N}(I_\alpha\ast|u|^p)|u|^p
+\left(\frac{\gamma_q}{2}-\frac{1}{q}\right)\mu\int_{\mathbb{R}^N}|u|^q\geq
C_1,
\end{equation}
which implies $c^{po}>0$.

\textbf{Case 2} ($p=\bar{p}$). Similarly to Case 1, just in
(\ref{e3.11}), we estimate the term
$\int_{\mathbb{R}^N}(I_\alpha\ast|u|^{\bar{p}})|u|^{\bar{p}}$ by
using (\ref{e3.14}), i.e.,
\begin{equation}\label{e3.16}
\int_{\mathbb{R}^N}(I_\alpha\ast|u|^{\bar{p}})|u|^{\bar{p}}\leq
\left(\frac{\int_{\mathbb{R}^N}|\nabla
u|^2}{S_\alpha}\right)^{\bar{p}}.
\end{equation}
\end{proof}

\begin{lemma}\label{lem3.7}
Assume the conditions in Theorem \ref{thm1.1} hold.  Then there
exists $k>0$ sufficiently small such that
\begin{equation*}0<\sup_{\overline{A_k}}E<c^{po}\ \ \mathrm{and}\ \
u\in \overline{A_k}\Rightarrow E(u),\ P(u)>0,
\end{equation*}
where $A_k=\left\{u\in S_a:\|\nabla u\|_2^2<k\right\}$.
\end{lemma}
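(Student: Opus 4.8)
The plan is to reduce both requirements to a one–variable analysis of the single quantity $t:=\|\nabla u\|_2^2$, exploiting that on $S_a$ every nonlinear term is controlled by a power of $t$. For $u\in S_a$, Lemma \ref{lem cGN} gives $\int_{\mathbb{R}^N}(I_\alpha\ast|u|^p)|u|^p\le C_{\alpha,p}^{2p}a^{2p(1-\eta_p)}\,t^{p\eta_p}$ when $p<\bar p$, while for $p=\bar p$ I would instead invoke (\ref{e3.16}), namely $\int_{\mathbb{R}^N}(I_\alpha\ast|u|^{\bar p})|u|^{\bar p}\le (t/S_\alpha)^{\bar p}$; and Lemma \ref{lem2.5} gives $\int_{\mathbb{R}^N}|u|^q\le C_{N,q}^q a^{q(1-\gamma_q)}\,t^{q\gamma_q/2}$. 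The bookkeeping of exponents is the crux: since $2p\eta_p=N(p-1)-\alpha$ and $q\gamma_q=N(q-2)/2$, the assumptions $p>p^*$ and $q\ge q^*$ yield $p\eta_p>1$, $\bar p>1$, and $q\gamma_q/2\ge1$ with equality precisely when $q=q^*$.

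Next I would establish the lower bounds. Substituting the above into the definitions of $E$ and $P$ and factoring out $t$, I obtain $E(u)\ge t\big(\tfrac12-C\,t^{p\eta_p-1}-C'\,t^{q\gamma_q/2-1}\big)$ and $P(u)\ge t\big(1-C_1\,t^{p\eta_p-1}-C_1'\,t^{q\gamma_q/2-1}\big)$. When $q>q^*$ every exponent inside the brackets is strictly positive, so the bracketed factors tend to $\tfrac12$ and $1$ as $t\to0$; since $t>0$ on $S_a$ (a constant in $L^2(\mathbb{R}^N)$ must vanish), both $E(u)$ and $P(u)$ are strictly positive once $t\le k$ with $k$ small. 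The delicate case is $q=q^*$, where $t^{q\gamma_q/2-1}=t^0$ does not vanish. Here I would use the special sharp constant from (\ref{e2.1}), $C_{N,q^*}^{q^*}=q^*/\big(2(a_N^*)^{4/N}\big)$, together with the standing hypothesis $\mu a^{4/N}<(a_N^*)^{4/N}$, exactly as in the proof of Lemma \ref{lem3.6}: this gives $\tfrac{\mu}{q^*}C_{N,q^*}^{q^*}a^{4/N}<\tfrac12$ and $\mu\gamma_{q^*}C_{N,q^*}^{q^*}a^{4/N}<1$, so the constant terms are absorbed into the linear coefficients, leaving $E(u)\ge t(\delta-C\,t^{p\eta_p-1})$ and $P(u)\ge t(\delta'-C_1\,t^{p\eta_p-1})$ with $\delta,\delta'>0$ and $p\eta_p-1>0$; positivity for small $t$ follows as before.

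For the upper bound I would argue directly: since $\mu>0$ and $\int_{\mathbb{R}^N}(I_\alpha\ast|u|^p)|u|^p\ge0$, every $u$ satisfies $E(u)\le\tfrac12\|\nabla u\|_2^2$, whence $\sup_{\overline{A_k}}E\le k/2$. Choosing $k<2c^{po}$ and invoking $c^{po}>0$ from Lemma \ref{lem3.1} yields $\sup_{\overline{A_k}}E<c^{po}$, while $\sup_{\overline{A_k}}E>0$ is immediate from the lower bound $E>0$ just proved on the nonempty set $\overline{A_k}$. It then suffices to take $k$ small enough that $E>0$, $P>0$ on $\overline{A_k}$ and $k/2<c^{po}$ all hold at once. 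The only genuine obstacle is the mass–critical borderline $q=q^*$, where the subcritical ``higher-order-term'' argument fails and one must use the sharp constant and the threshold $\mu a^{4/N}<(a_N^*)^{4/N}$ to keep the leading coefficients positive; the upper–critical case $p=\bar p$ requires merely substituting (\ref{e3.16}) for Lemma \ref{lem cGN}, the exponent $\bar p>1$ still rendering that term higher order.
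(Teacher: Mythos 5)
Your proof is correct and follows essentially the same route as the paper: the paper's proof of Lemma \ref{lem3.7} likewise bounds $E(u)$ and $P(u)$ from below via the Gagliardo--Nirenberg inequalities (substituting (\ref{e3.16}) when $p=\bar p$), defers the $q>q^*$ versus $q=q^*$ case analysis to the argument of Lemma \ref{lem3.1} exactly as you spell it out with the sharp constant (\ref{e2.1}) and the threshold $\mu a^{4/N}<(a_N^*)^{4/N}$, and obtains the upper bound from $E(u)\leq\frac12\|\nabla u\|_2^2$ together with $c^{po}>0$. Your write-up merely makes explicit the exponent bookkeeping and the strict positivity of $\|\nabla u\|_2$ on $S_a$ that the paper leaves implicit.
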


\begin{proof}
\textbf{Case 1} ($p\neq \bar{p}$).  By the Gagliardo-Nirenberg
inequality, we have
\begin{equation}\label{e3.15}
\begin{split}
E(u)&\geq \frac{1}{2}\|\nabla u\|_2^2-\frac{\mu}{q}
C_{N,q}^qa^{q(1-\gamma_q)}\|\nabla u\|_2^{q\gamma_q}-\frac{1}{2p}
C_{\alpha,p}^{2p}a^{2p(1-\eta_p)}\|\nabla u\|_2^{2p\eta_p}>0,\\
P(u)&\geq\|\nabla u\|_2^2-\mu
\gamma_qC_{N,q}^qa^{q(1-\gamma_q)}\|\nabla u\|_2^{q\gamma_q}-\eta_p
C_{\alpha,p}^{2p}a^{2p(1-\eta_p)}\|\nabla u\|_2^{2p\eta_p}>0,
\end{split}
\end{equation}
if $u\in \overline{A_k}$ with $k$ small enough, see the proof of
Lemma \ref{lem3.1} for more details. If necessary replacing $k$ with
a smaller quantity, recalling that $c^{po}>0$ by Lemma \ref{lem3.1},
we also have $$E(u)\leq \frac{1}{2}\|\nabla u\|_2^2<c^{po}.$$

\textbf{Case 2} ($p=\bar{p}$). Similarly to Case 1, just in
(\ref{e3.15}), we estimate the term
$\int_{\mathbb{R}^N}(I_\alpha\ast|u|^{\bar{p}})|u|^{\bar{p}}$ by
using (\ref{e3.16}).
\end{proof}

\begin{lemma}\label{lem4.3}
Let $N\geq 3$, $\alpha\in (0,N)$, $a>0$, $\mu>0$, $p=\overline{p}$
and $q^*\leq q<2^*$. If $q=q^*$, we further assume that $\mu
a^{{4}/{N}}<(a_N^*)^{{4}/{N}}$. Then
$$c^{po}<\frac{2+\alpha}{2(N+\alpha)}S_\alpha^{\frac{N+\alpha}{2+\alpha}}.$$
\end{lemma}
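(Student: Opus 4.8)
The plan is to prove the strict inequality $c^{po}<\frac{2+\alpha}{2(N+\alpha)}S_\alpha^{\frac{N+\alpha}{2+\alpha}}$ by constructing an explicit family of test functions built from the extremal functions (Aubin--Talenti type bubbles) that realize the best constant $S_\alpha$ in the inequality (\ref{e3.14}) for the upper critical Choquard problem (\ref{e1.13}), and then estimating $c^{po}$ along this family. Since $c^{po}=\inf_{u\in\mathcal P}E(u)$, it suffices to exhibit a single $u\in S_a$ for which $\max_{s\in\mathbb R}E(s\star u)$ lies strictly below the threshold; indeed, by Lemma \ref{lem3.6} every $u\in S_a$ has a unique projection $s_u\star u\in\mathcal P$ with $E(s_u\star u)=\max_s E(s\star u)$, so $c^{po}\le \max_s E(s\star u)$ for any admissible $u$.

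First I would take the extremal bubble $U_\varepsilon$ for (\ref{e1.13}), normalized so that $\|\nabla U_\varepsilon\|_2^2=\int(I_\alpha\ast|U_\varepsilon|^{\bar p})|U_\varepsilon|^{\bar p}=S_\alpha^{\frac{N+\alpha}{2+\alpha}}$, and multiply it by a fixed cutoff $\chi$ supported near the origin to obtain $u_\varepsilon\in H^1$. I would then rescale in the $L^2$-norm so that $u_\varepsilon\in S_a$. The key asymptotic estimates as $\varepsilon\to 0$ are the standard ones (cf. \cite{Gao-Yang-1}): the Dirichlet energy and the Choquard term each equal $S_\alpha^{\frac{N+\alpha}{2+\alpha}}+o(1)$ up to the leading bubble scaling, while the remainder terms from the cutoff contribute lower-order corrections. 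The crucial point is that, because $q<2^*$, the additional positive term $\frac{\mu}{q}\int|u_\varepsilon|^q$ in $E$ produces a strictly negative contribution to the energy along the fiber that dominates the error terms; this is precisely what forces the strict inequality below the free threshold $\frac{2+\alpha}{2(N+\alpha)}S_\alpha^{\frac{N+\alpha}{2+\alpha}}$, which is the mountain-pass level of the purely critical problem without perturbation.

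The main obstacle will be the bookkeeping of the competing error terms as $\varepsilon\to 0$: I must verify that the gain coming from the perturbation $\mu|u|^{q-2}u$ (of order $\varepsilon^{\theta}$ for some explicit $\theta>0$ depending on $N,\alpha,q$) strictly beats the loss coming from the cutoff and $L^2$-renormalization of the bubble (a competing positive power of $\varepsilon$). Concretely, after evaluating $E(s_{u_\varepsilon}\star u_\varepsilon)=\max_s E(s\star u_\varepsilon)$, one expands the maximizing scale $s_\varepsilon$ and expands $E$ to the relevant order; the sign of the leading correction decides the inequality. I expect the inequality to hold in the whole stated range $q^*\le q<2^*$ (with the smallness condition $\mu a^{4/N}<(a_N^*)^{4/N}$ only needed to guarantee $s_{u_\varepsilon}$ is well defined and finite via Lemma \ref{lem3.6}), and the delicate step is choosing the cutoff scale and the order of $\varepsilon$ so that the perturbative term is genuinely of lower order than the bubble energy but of higher order than the cutoff error, so that it survives as the decisive negative term.
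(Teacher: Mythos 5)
Your proposal follows essentially the same route as the paper's proof: cut-off Aubin--Talenti bubbles rescaled into $S_a$, projected onto $\mathcal{P}$ via the fiber maps of Lemma \ref{lem3.6} (so that $c^{po}\le\max_{s}E(s\star v_\epsilon)$), with the subcritical term $\mu\int_{\mathbb{R}^N}|v_\epsilon|^q$ supplying the decisive contribution that dominates the cutoff and renormalization errors $O(\epsilon^{N-2})+O(\epsilon^{\frac{N+\alpha}{2}})$; the only minor imprecision is that after the $L^2$-renormalization this gain is not always a positive power of $\epsilon$ --- it is bounded below by a positive constant for $N\ge 5$, by a logarithmic factor for $N=4$, and by $\epsilon^{\frac{N}{2}-\frac{N-2}{4}q}$ only for $N=3$ --- which makes the bookkeeping even more favorable than you anticipate. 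The paper also pins down your ``expansion of the maximizing scale'' step as a compactness claim: $s_\epsilon$ stays in a fixed interval $[-s_0,s_1]$, proved by contradiction using $c^{po}>0$ from Lemma \ref{lem3.1}.
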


\begin{proof}
For any $\epsilon>0$, we define
\begin{equation*}
u_\epsilon(x)=\varphi(x)U_\epsilon(x),
\end{equation*}
where $\varphi(x) \in C_c^{\infty}(\mathbb{R}^N)$ is a cut off
function satisfying: (a) $0\leq \varphi(x)\leq 1$ for any $x\in
\mathbb{R}^N$; (b) $\varphi(x)\equiv 1$ in $B_1$; (c)
$\varphi(x)\equiv 0$ in $\mathbb{R}^N\setminus \overline{B_2}$.
Here, $B_s$ denotes the ball in $\mathbb{R}^N$ of center at origin
and radius $s$.
\begin{equation*}
U_\epsilon(x)=\frac{\left(N(N-2)\epsilon^2\right)^{\frac{N-2}{4}}}{\left(\epsilon^2+|x|^2\right)^{\frac{N-2}{2}}},
\end{equation*}
where $U_1(x)$ is the extremal function of
\begin{equation}\label{e3.14}
\begin{split}
S_\alpha:&=\inf_{ u\in
D^{1,2}(\mathbb{R}^N)\setminus\{0\}}\frac{\int_{\mathbb{R}^N}|\nabla
u|^2}{\left(\int_{\mathbb{R}^N}\left(I_{\alpha}\ast
|u|^{\bar{p}}\right)|u|^{\bar{p}}\right)^{{1}/{\bar{p}}}}.
\end{split}
\end{equation}
In \cite{Gao-Yang-1}, they proved that
$S_\alpha=\frac{S}{(A_\alpha(N) C_\alpha(N))^{{1}/{\bar{p}}}}$,
where $A_\alpha(N)$ is defined in (\ref{e1.22}), $C_\alpha(N)$ is in
Lemma \ref{lem HLS} and
\begin{equation*}
S:=\inf_{ u\in
D^{1,2}(\mathbb{R}^N)\setminus\{0\}}\frac{\int_{\mathbb{R}^N}|\nabla
u|^2}{\left(\int_{\mathbb{R}^N}|u|^{\frac{2N}{N-2}}\right)^{\frac{N-2}{N}}}.
\end{equation*}

By \cite{Brezis-Nirenberg 1983} (see also \cite{Willem 1996}), we
have the following estimates.
\begin{equation*}
\int_{\mathbb{R}^N}|\nabla
u_\epsilon|^2=S^{\frac{N}{2}}+O(\epsilon^{N-2}),\ N\geq 3,
\end{equation*}
and
\begin{equation*}
\int_{\mathbb{R}^N}| u_\epsilon|^2=\left\{\begin{array}{ll}
K_2\epsilon^2+O(\epsilon^{N-2}),& N\geq 5,\\
K_2\epsilon^2|\ln \epsilon|+O(\epsilon^2),& N=4,\\
K_2\epsilon+O(\epsilon^2),& N=3,
\end{array}\right.
\end{equation*}
where $K_2>0$. By direct calculations, for  $t\in (2,2^*)$, there
exists $K_1>0$ such that
\begin{equation*}
\begin{split}
\int_{\mathbb{R}^N}|u_\epsilon|^t &\geq
(N(N-2))^{\frac{N-2}{4}t}\epsilon^{N-\frac{N-2}{2}t}\int_{B_{\frac{1}{\epsilon}}(0)}\frac{1}{(1+|x|^2)^{\frac{N-2}{2}t}}dx\\
&\geq \left\{\begin{array}{ll}
K_1\epsilon^{N-\frac{N-2}{2}t},& (N-2)t>N,\\
K_1\epsilon^{N-\frac{N-2}{2}t}|\ln \epsilon|,& (N-2)t=N,\\
K_1\epsilon^{\frac{N-2}{2}t},& (N-2)t<N.
\end{array}\right.
\end{split}
\end{equation*}
Moreover, similarly as in \cite{Gao-Yang-2} and \cite{Gao-Yang-1},
by direct computations, we have
\begin{equation*}
\int_{\mathbb{R}^N}\left(I_\alpha\ast|u_\epsilon|^{\bar{p}}\right)
|u_\epsilon|^{\bar{p}}\geq (A_\alpha(N)
C_\alpha(N))^{\frac{N}2}S_\alpha^{\frac{N+\alpha}2}
+O(\epsilon^{\frac{N+\alpha}{2}}).
\end{equation*}

Define
$v_\epsilon(x)=(a^{-1}\|u_\epsilon\|_2)^{\frac{N-2}{2}}u_\epsilon(a^{-1}\|u_\epsilon\|_2x)
$. Then
\begin{equation*}
\int_{\mathbb{R}^N}|v_\epsilon|^2=a^2,\ \int_{\mathbb{R}^N}|\nabla
v_\epsilon|^2= \int_{\mathbb{R}^N}|\nabla u_\epsilon|^2,
\end{equation*}
\begin{equation*}
\int_{\mathbb{R}^N}\left(I_\alpha\ast|v_\epsilon|^{\bar{p}}\right)
|v_\epsilon|^{\bar{p}}=\int_{\mathbb{R}^N}\left(I_\alpha\ast|u_\epsilon|^{\bar{p}}\right)
|u_\epsilon|^{\bar{p}},
\end{equation*}
and for $q\in[q^*,2^*)$,
\begin{equation*}
\begin{split}
\int_{\mathbb{R}^N}|v_\epsilon|^{q}&=(a^{-1}\|u_\epsilon\|_2)^{\frac{N-2}{2}q-N}\int_{\mathbb{R}^N}|u_\epsilon|^{q}\\
&\geq a^{N-\frac{N-2}{2}q}\|u_\epsilon\|_2^{\frac{N-2}{2}q-N}K_1\epsilon^{N-\frac{N-2}{2}q}\\
&\geq
\frac{1}{2}a^{N-\frac{N-2}{2}q}K_1K_2^{\frac{N-2}{4}q-\frac{N}{2}}\times\left\{\begin{array}{ll}
1,& N\geq 5,\\
|\ln \epsilon|^{\frac{N-2}{4}q-\frac{N}{2}},& N=4,\\
\epsilon^{\frac{N}{2}-\frac{N-2}{4}q},& N=3.
\end{array}\right.
\end{split}
\end{equation*}
Next we use $v_\epsilon$ to estimate $c^{po}$. By Lemma
\ref{lem3.6}, there exists a unique $s_\epsilon$ such that
$P(s_\epsilon\star v_\epsilon)=0$ and $E(s_\epsilon\star
v_\epsilon)=\max_{s\in \mathbb{R}}E(s\star v_\epsilon)$. Thus,
$c^{po}\leq \max_{s\in \mathbb{R}}E(s\star v_\epsilon)$. By direct
calculations, one has
\begin{equation}\label{e1.117}
\begin{split}
&E(s\star v_\epsilon)\\
&=\frac{1}{2}e^{2s}\int_{\mathbb{R}^N}|\nabla
v_\epsilon|^2-\frac{1}{2\bar{p}}e^{(N\bar{p}-N-\alpha)s}\int_{\mathbb{R}^N}(I_\alpha\ast|v_\epsilon|^{\bar{p}})|v_\epsilon|^{\bar{p}}
-\frac{\mu}{q}e^{(\frac{N}{2}q-N)s}\int_{\mathbb{R}^N}|v_\epsilon|^q\\
&\leq
\frac{1}{2}e^{2s}\left(S^{\frac{N}{2}}+O(\epsilon^{N-2})\right)
-\frac{1}{2\bar{p}}e^{(N\bar{p}-N-\alpha)s}\left((A_\alpha(N)
C_\alpha(N))^{\frac{N}2}S_\alpha^{\frac{N+\alpha}2}
+O(\epsilon^{\frac{N+\alpha}{2}})\right)\\
&\qquad-\frac{\mu}{q}e^{(\frac{N}{2}q-N)s}\frac{1}{2}a^{N-\frac{N-2}{2}q}K_1K_2^{\frac{N-2}{4}q-\frac{N}{2}}\times\left\{\begin{array}{ll}
1,& N\geq 5,\\
|\ln \epsilon|^{\frac{N-2}{4}q-\frac{N}{2}},& N=4,\\
\epsilon^{\frac{N}{2}-\frac{N-2}{4}q},& N=3.
\end{array}\right.
\end{split}
\end{equation}
We claim that there exist $s_0, s_1>0$ independent of $\epsilon$
such that $s_\epsilon\in [-s_0, s_1]$ for $\epsilon>0$ small.
Suppose by contradiction that $s_\epsilon\to -\infty$ or
$s_\epsilon\to +\infty$ as $\epsilon\to 0$. (\ref{e1.117}) implies
that $\max_{s\in \mathbb{R}}E(s\star v_\epsilon)\leq 0$ as
$\epsilon\to 0$ and then $c^{po}\leq 0$, which contradicts Lemma
\ref{lem3.1}. Thus, the claim holds.

In (\ref{e1.117}), $O(\epsilon^{N-2})$ and
$O(\epsilon^{\frac{N+\alpha}{2}})$ can be controlled by the last
term for $\epsilon>0$ small enough. Hence,
\begin{equation*}
\begin{split}
\max_{s\in \mathbb{R}}E(s\star v_\epsilon)&<\sup_{s\in
\mathbb{R}}\left(
\frac{1}{2}e^{2s}S^{\frac{N}{2}}-\frac{1}{2\bar{p}}e^{(N\bar{p}-N-\alpha)s}(A_\alpha(N)
C_\alpha(N))^{\frac{N}2}S_\alpha^{\frac{N+\alpha}2}\right) \\
&\leq\frac{2+\alpha}{2(N+\alpha)}S_\alpha^{\frac{N+\alpha}{2+\alpha}}.
\end{split}
\end{equation*}
The proof is complete.
\end{proof}

\begin{lemma}\label{lem3.3}
Assume the conditions in Theorem \ref{thm1.1} hold.  If  $u\in
\mathcal{P}$ such that $E(u)=c^{po}$, then  $u$ satisfies the
 equation (\ref{e1.1}) with some $\lambda<0$.
\end{lemma}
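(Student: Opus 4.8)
The plan is to prove that the minimizer $u$ is a free constrained critical point of $E$ on $S_a$ — i.e.\ that the Lagrange multiplier attached to the Poho\v{z}aev constraint $P=0$ vanishes — and then to read off the sign of the remaining multiplier. First I would invoke the Lagrange multiplier rule for the minimization of $E$ on $\mathcal P=\{v\in S_a:P(v)=0\}$, which is cut out of $H^1(\mathbb R^N)$ by the two $C^1$ constraints $G(v):=\int_{\mathbb R^N}|v|^2-a^2=0$ and $P(v)=0$. The rule applies once $dG(u)$ and $dP(u)$ are linearly independent, and this is exactly where the fiber map of Lemma~\ref{lem3.6} is used: along the dilation path $s\mapsto s\star u$ the mass is unchanged, whereas $\frac{d}{ds}P(s\star u)\big|_{s=0}=g_u'(0)<0$ by Lemma~\ref{lem3.6}, so $dP(u)$ cannot be a multiple of $dG(u)$. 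Hence there are $\lambda,\beta\in\mathbb R$ with $E'(u)=\lambda u+\beta P'(u)$ in $H^{-1}(\mathbb R^N)$.

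Then I would show $\beta=0$ by pairing this identity with the infinitesimal dilation $w:=\tfrac N2 u+x\cdot\nabla u=\frac{d}{ds}(s\star u)\big|_{s=0}$. Three facts give the conclusion at once: $\langle E'(u),w\rangle=\Psi_u'(0)=P(u)=0$, using the identity $\Psi_u'(s)=P(s\star u)$ from Lemma~\ref{lem3.6} and $u\in\mathcal P$; $\langle u,w\rangle=\tfrac12\frac{d}{ds}\|s\star u\|_2^2\big|_{s=0}=0$; and, writing $P(s\star u)=e^{2s}g_u(s)$ as in Lemma~\ref{lem3.6}, $\langle P'(u),w\rangle=\frac{d}{ds}P(s\star u)\big|_{s=0}=g_u'(0)$. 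Since Lemma~\ref{lem3.6} provides $g_u'(s)<0$ for every $s$, the resulting identity $0=\beta\,g_u'(0)$ forces $\beta=0$, and then $E'(u)=\lambda u$, which is precisely \eqref{e1.1}.

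Finally I would pin down the sign of $\lambda$. Testing \eqref{e1.1} with $u$ yields the Nehari relation
\begin{equation*}
\int_{\mathbb R^N}|\nabla u|^2=\lambda a^2+\int_{\mathbb R^N}(I_\alpha\ast|u|^p)|u|^p+\mu\int_{\mathbb R^N}|u|^q,
\end{equation*}
while $P(u)=0$ reads
\begin{equation*}
\int_{\mathbb R^N}|\nabla u|^2=\eta_p\int_{\mathbb R^N}(I_\alpha\ast|u|^p)|u|^p+\mu\gamma_q\int_{\mathbb R^N}|u|^q.
\end{equation*}
Subtracting the second from the first gives
\begin{equation*}
\lambda a^2=-(1-\eta_p)\int_{\mathbb R^N}(I_\alpha\ast|u|^p)|u|^p-\mu(1-\gamma_q)\int_{\mathbb R^N}|u|^q .
\end{equation*}
Because $p\leq\bar p$ forces $\eta_p\leq 1$ while $q<2^*$ forces $\gamma_q<1$ strictly, and both integrals are positive since $u\not\equiv 0$ and $\mu>0$, the right-hand side is strictly negative; therefore $\lambda<0$.

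The delicate point is the $\beta=0$ step: both the linear independence of the constraints and the pairing with $w$ rest on the nondegeneracy $g_u'(0)\neq0$, and one must justify the pairing rigorously, since $w=\tfrac N2 u+x\cdot\nabla u$ need not belong to $H^1(\mathbb R^N)$ for a general $u\in H^1(\mathbb R^N)$. I expect to handle this as is standard in the Poho\v{z}aev-constraint method: interpret the three pairings above through the smooth scalar maps $s\mapsto E(s\star u),\ \|s\star u\|_2^2,\ P(s\star u)$, whose derivatives are given by the explicit formulas in \eqref{e3.4} and Lemma~\ref{lem3.6}; alternatively, one may write the Poho\v{z}aev identity (as in Lemma~\ref{lem3.9}) of the modified equation satisfied by $u$, combine it with the relation obtained by testing with $u$ to eliminate $\lambda$, and subtract $P(u)=0$, which again yields $\beta\,g_u'(0)=0$. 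The sign bookkeeping of the last step is then routine, depending only on $\eta_p\le1$ and $\gamma_q<1$.
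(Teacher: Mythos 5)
Your proposal is correct and follows essentially the same route as the paper: the paper likewise applies the Lagrange rule with two multipliers and kills the multiplier attached to $P$ by writing the Poho\v{z}aev--Nehari identity of the modified equation and subtracting $P(u)=0$, which is exactly your fallback argument, and the resulting bracket $2\int_{\mathbb{R}^N}|\nabla u|^2-2p\eta_p^2\int_{\mathbb{R}^N}(I_\alpha\ast|u|^p)|u|^p-\mu q\gamma_q^2\int_{\mathbb{R}^N}|u|^q$ coincides, on $\mathcal{P}$, with your $g_u'(0)<0$. Your extra touches --- checking the constraint qualification explicitly, and concluding directly from $g_u'(0)<0$ of Lemma \ref{lem3.6} rather than via the paper's two-equation contradiction --- are refinements of the same argument, and you correctly flag (and repair, via the Poho\v{z}aev identity of the modified equation) the formal issue that $\tfrac{N}{2}u+x\cdot\nabla u$ need not belong to $H^1(\mathbb{R}^N)$.
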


\begin{proof}
By the Lagrange multipliers rule, there exist $\lambda$ and $\eta$
such that $u$ satisfies
\begin{equation}\label{e3.34}
\begin{split}
-\Delta u-&(I_\alpha\ast|u|^p)|u|^{p-2}u-\mu|u|^{q-2}u\\
&\qquad=\lambda u+\eta[-2\Delta
u-2p\eta_p(I_\alpha\ast|u|^p)|u|^{p-2}u-\mu q\gamma_q|u|^{q-2}u],
\end{split}
\end{equation}
or equivalently,
\begin{equation*}
-(1-2\eta)\Delta u=\lambda u+(1-\eta
2p\eta_p)(I_\alpha\ast|u|^p)|u|^{p-2}u+\mu(1-\eta q\gamma_q)
|u|^{q-2}u.
\end{equation*}

 Next we show $\eta=0$. Similarly to the definition of $P(u)$ (see Lemma \ref{lem3.8}), we obtain
\begin{equation*}
(1-2\eta)\int_{\mathbb{R}^N}|\nabla u|^2-(1-\eta 2p\eta_p)\eta_p
\int_{\mathbb{R}^N}(I_\alpha\ast|u|^p)|u|^{p}-\mu(1-\eta
q\gamma_q)\gamma_q \int_{\mathbb{R}^N}|u|^{q}=0,
\end{equation*}
which combined with $P(u)=0$ gives that
\begin{equation*}
\eta\left(2\int_{\mathbb{R}^N}|\nabla
u|^2-2p\eta_p^2\int_{\mathbb{R}^N}(I_\alpha\ast|u|^p)|u|^{p}-\mu
q\gamma_q^2\int_{\mathbb{R}^N}|u|^{q}\right)=0.
\end{equation*}
If $\eta\neq0$, then
\begin{equation*}
2\int_{\mathbb{R}^N}|\nabla
u|^2-2p\eta_p^2\int_{\mathbb{R}^N}(I_\alpha\ast|u|^p)|u|^{p}-\mu
q\gamma_q^2\int_{\mathbb{R}^N}|u|^{q}=0,
\end{equation*}
which combined with $P(u)=0$ gives that
\begin{equation*}
\begin{cases}
\mu\gamma_q(2p\eta_p-q\gamma_q)\int_{\mathbb{R}^N}|u|^q=(2p\eta_p-2)\int_{\mathbb{R}^N}|\nabla u|^2,   \\
\eta_p(q\gamma_q-2p\eta_p)\int_{\mathbb{R}^N}(I_\alpha\ast|u|^p)|u|^{p}=(q\gamma_q-2)\int_{\mathbb{R}^N}|\nabla
u|^2.
\end{cases}
\end{equation*}
That is a contradiction. So $\eta=0$.

From (\ref{e3.34}) with $\eta=0$, $P(u)=0$, $0< \gamma_q<1$,
$0<\eta_p\leq 1$ and $\mu>0$, we obtain
\begin{equation*}
\begin{split}
\lambda a^2&=\int_{\mathbb{R}^N}|\nabla u|^2-
\int_{\mathbb{R}^N}(I_\alpha\ast|u|^p)|u|^{p}-\mu \int_{\mathbb{R}^N}|u|^{q}\\
&=(\eta_p-1)\int_{\mathbb{R}^N}(I_\alpha\ast|u|^p)|u|^{p}+\mu
(\gamma_q-1) \int_{\mathbb{R}^N}|u|^{q}<0,
\end{split}
\end{equation*}
which implies $\lambda<0$. The proof is complete.
\end{proof}

\begin{lemma}\label{lem4.7}
Assume the conditions in Theorem \ref{thm1.1} hold. If
$\mathcal{C}\neq\emptyset$, then $c^{po}=c^{g}$ and
$\mathcal{C}=\mathcal{G}$, where $\mathcal{C}:=\left\{u\in
\mathcal{P}:E(u)=c^{po}\right\}$.
\end{lemma}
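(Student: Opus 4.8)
The plan is to exploit the two facts already established: Lemma \ref{lem3.3}, which says that a minimizer of $E$ on $\mathcal{P}$ is automatically a genuine critical point of $E|_{S_a}$ (solving (\ref{e1.1}) with some $\lambda<0$), and Lemma \ref{lem3.8}, which says that every solution of (\ref{e1.1}) lies on the Poho\v{z}aev set $\mathcal{P}$. Together these show that being a minimizer on $\mathcal{P}$ and being a normalized ground state are essentially the same condition, which yields both $c^{po}=c^g$ and $\mathcal{C}=\mathcal{G}$ at once.

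First I would prove $c^g\le c^{po}$. Since $\mathcal{C}\neq\emptyset$ by hypothesis, pick $u\in\mathcal{C}$; then $u\in\mathcal{P}$ with $E(u)=c^{po}$, so Lemma \ref{lem3.3} gives that $u$ solves (\ref{e1.1}) for some $\lambda<0$, i.e. $(E|_{S_a})'(u)=0$. In particular the constrained critical set $\{v\in S_a:(E|_{S_a})'(v)=0\}$ is nonempty, so $c^g$ is well defined, and by its very definition $c^g\le E(u)=c^{po}$.

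Next I would prove the reverse inequality $c^{po}\le c^g$. Let $v\in S_a$ be any critical point of $E|_{S_a}$; then $v$ solves (\ref{e1.1}) with its associated Lagrange multiplier, so Lemma \ref{lem3.8} yields $P(v)=0$, i.e. $v\in\mathcal{P}$. Hence $E(v)\ge\inf_{\mathcal{P}}E=c^{po}$. Taking the infimum over all constrained critical points $v$ gives $c^g\ge c^{po}$, and combining the two inequalities we conclude $c^{po}=c^g$.

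Finally I would establish $\mathcal{C}=\mathcal{G}$. For $\mathcal{C}\subseteq\mathcal{G}$: any $u\in\mathcal{C}$ is a critical point of $E|_{S_a}$ by Lemma \ref{lem3.3} and satisfies $E(u)=c^{po}=c^g$, so $u$ is a normalized ground state, i.e. $u\in\mathcal{G}$. For $\mathcal{G}\subseteq\mathcal{C}$: any $u\in\mathcal{G}$ is a constrained critical point with $E(u)=c^g=c^{po}$, hence solves (\ref{e1.1}), so $P(u)=0$ by Lemma \ref{lem3.8}, giving $u\in\mathcal{P}$; together with $E(u)=c^{po}$ this shows $u\in\mathcal{C}$. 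There is no genuine analytic difficulty here, since the statement is a formal consequence of Lemmas \ref{lem3.3} and \ref{lem3.8}; the only point requiring care is verifying that $c^g$ is well defined (that the constrained critical set is nonempty), which is guaranteed precisely by the hypothesis $\mathcal{C}\neq\emptyset$ combined with Lemma \ref{lem3.3}.
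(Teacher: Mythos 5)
Your proof is correct and follows essentially the same route as the paper: both directions of the inequality $c^{po}=c^{g}$ are obtained by combining Lemma \ref{lem3.3} (a minimizer on $\mathcal{P}$ is a constrained critical point with $\lambda<0$) with Lemma \ref{lem3.8} (every normalized solution lies on $\mathcal{P}$), and the set equality $\mathcal{C}=\mathcal{G}$ then follows formally. Your write-up merely makes explicit two points the paper leaves implicit, namely the well-definedness of $c^{g}$ and the two inclusions $\mathcal{C}\subseteq\mathcal{G}$ and $\mathcal{G}\subseteq\mathcal{C}$.
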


\begin{proof}
For any $u\in \mathcal{C}$, by Lemma \ref{lem3.3}, $u$ is a solution
to (\ref{e1.1}) and thus $E(u)\geq c^{g}$. So $c^{po}\geq c^{g}$
holds. On the other hand, for any normalized solution $v$ of
(\ref{e1.1}) on $S_a$, by Lemma \ref{lem3.8}, $P(v)=0$ and thus
$E(v)\geq c^{po}$, which implies that the reverse inequality
$c^{g}\geq c^{po}$ holds. Hence $c^{g}=c^{po}$ and
$\mathcal{C}=\mathcal{G}$.
\end{proof}

\begin{proposition}\label{pro1.1}
Assume the conditions in Theorem \ref{thm1.1} hold. Let
$\{u_n\}\subset S_{a,r}$ be a Palais-Smale sequence for $E|_{s_a}$
at level $c^{po}$ with $P(u_n)\to 0$ as $n\to \infty$. Then up to a
subsequence, $u_n\to u$ strongly in $H^1(\mathbb{R}^N)$, and $u\in
S_a$ is a radial solution to (\ref{e1.1}) with some $\lambda<0$.
\end{proposition}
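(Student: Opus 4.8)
The plan is to establish strong convergence of the Palais--Smale sequence in the standard way: first extract a weak limit, then upgrade to strong convergence by controlling the possible loss of compactness (which is especially delicate in the critical case $p=\bar p$ where a nonzero minimal-energy bubble could escape to infinity or concentrate).

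First I would show that $\{u_n\}$ is bounded in $H^1(\mathbb{R}^N)$. Since $E(u_n)\to c^{po}$ and $P(u_n)\to 0$, I combine these two relations as in \eqref{e3.2}: writing $E(u_n)-\tfrac{1}{2p\eta_p}P(u_n)$ or $E(u_n)-\tfrac{1}{q\gamma_q}P(u_n)$ isolates a positive coercive combination of the $\|\nabla u_n\|_2^2$ term together with one of the nonlinear terms, yielding a uniform bound on $\|\nabla u_n\|_2$; the $L^2$-norm is fixed at $a^2$ on $S_a$, so boundedness in $H^1$ follows. Passing to a subsequence, $u_n\rightharpoonup u$ weakly in $H^1_r(\mathbb{R}^N)$, and by the compact embedding $H^1_r(\mathbb{R}^N)\hookrightarrow L^t(\mathbb{R}^N)$ for $2<t<2^*$ (when $p<\bar p$) I get strong convergence in the relevant Lebesgue spaces. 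The Lagrange-multiplier structure enters next: from $(E|_{S_a})'(u_n)\to 0$ there are multipliers $\lambda_n$ with $-\Delta u_n-(I_\alpha\ast|u_n|^p)|u_n|^{p-2}u_n-\mu|u_n|^{q-2}u_n=\lambda_n u_n+o(1)$; testing against $u_n$ and using the bounds shows $\lambda_n$ is bounded, so $\lambda_n\to\lambda$ up to a subsequence. Using Lemma \ref{lem33.3} to pass to the limit in the nonlocal term and weak-to-weak convergence in the local term, $u$ solves \eqref{e1.1} with multiplier $\lambda$.

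The core step is strong convergence and the sign of $\lambda$. Testing the limit equation against $u$ and subtracting from the tested approximate equation, I control $\|\nabla(u_n-u)\|_2$; the local term and, in the subcritical case, the nonlocal term vanish by compactness, so $\|\nabla u_n\|_2^2\to\|\nabla u\|_2^2$ provided $\lambda<0$, which forces $u_n\to u$ strongly in $H^1$ and hence $u\in S_a$. To see $\lambda<0$: since $u$ solves \eqref{e1.1} it satisfies $P(u)=0$ by Lemma \ref{lem3.8}, and the same computation as in Lemma \ref{lem3.3} gives $\lambda a^2=(\eta_p-1)\int(I_\alpha\ast|u|^p)|u|^p+\mu(\gamma_q-1)\int|u|^q<0$ as long as $u\neq 0$, using $0<\eta_p\le 1$, $0<\gamma_q<1$; thus I must rule out $u=0$. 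If $u=0$, then along the sequence $\int(I_\alpha\ast|u_n|^p)|u_n|^p\to 0$ (subcritical, by compactness) and $\int|u_n|^q\to 0$, so $P(u_n)\to 0$ forces $\|\nabla u_n\|_2\to 0$, whence $E(u_n)\to 0$, contradicting $E(u_n)\to c^{po}>0$ by Lemma \ref{lem3.1}. Therefore $u\neq 0$, $\lambda<0$, and strong convergence holds.

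The hard part will be the critical case $p=\bar p$, where the embedding into the critical nonlocal term is no longer compact and a bubble of energy $\tfrac{2+\alpha}{2(N+\alpha)}S_\alpha^{(N+\alpha)/(2+\alpha)}$ can be lost. Here I would exploit the strict energy bound $c^{po}<\tfrac{2+\alpha}{2(N+\alpha)}S_\alpha^{(N+\alpha)/(2+\alpha)}$ from Lemma \ref{lem4.3}: writing $v_n:=u_n-u$, a Brezis--Lieb type splitting for the nonlocal critical term together with the defect inequality $\int(I_\alpha\ast|v_n|^{\bar p})|v_n|^{\bar p}\le (\|\nabla v_n\|_2^2/S_\alpha)^{\bar p}$ from \eqref{e3.14} shows that any nonvanishing defect $\ell=\lim\|\nabla v_n\|_2^2>0$ would contribute at least $\tfrac{2+\alpha}{2(N+\alpha)}S_\alpha^{(N+\alpha)/(2+\alpha)}$ to the energy; combined with $E(u)\ge c^{po}$ (since $u$ is a solution, hence $P(u)=0$) this would push $c^{po}$ above the threshold, a contradiction. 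Hence $\ell=0$ and $u_n\to u$ strongly even in the critical case, after which the sign of $\lambda$ is obtained exactly as above.
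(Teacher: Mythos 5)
Your proposal follows the same overall route as the paper's proof (boundedness from combining $E$ and $P$, weak limit in $H^1_r$, Lagrange multipliers with Lemma \ref{lem33.3}, nonvanishing from $c^{po}>0$, strong convergence by testing with $u_n-u$ using $\lambda<0$, and a Brezis--Lieb/threshold argument at $p=\bar p$), but two steps have genuine problems.

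\textbf{Boundedness fails as stated when $q=q^*$.} At $q=q^*$ one has $q\gamma_q=2$, so $E-\frac{1}{q\gamma_q}P=E-\frac12 P$ has \emph{zero} coefficient on $\|\nabla u_n\|_2^2$ (it only bounds the Choquard term), while $E-\frac{1}{2p\eta_p}P=\bigl(\frac12-\frac{1}{2p\eta_p}\bigr)\|\nabla u_n\|_2^2+\mu\bigl(\frac{\gamma_q}{2p\eta_p}-\frac1q\bigr)\int_{\mathbb{R}^N}|u_n|^q$ carries a \emph{negative} coefficient on the $q$-term, since $2p\eta_p>2=q\gamma_q$. So neither of your two combinations is coercive, and no uniform gradient bound follows from them alone. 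One must additionally invoke the sharp Gagliardo--Nirenberg inequality (Lemma \ref{lem2.5}) together with the hypothesis $\mu a^{4/N}<(a_N^*)^{4/N}$ --- which your proof never uses --- to absorb the local term via $\mu\gamma_q\int_{\mathbb{R}^N}|u_n|^q\le \mu\gamma_q C_{N,q}^q a^{q(1-\gamma_q)}\|\nabla u_n\|_2^2<\theta\|\nabla u_n\|_2^2$ with a fixed $\theta<1$; this is exactly how the paper closes Step 1 in the case $q=q^*$. Without it, the $L^2$-critical case of the proposition is unproved.

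\textbf{Circularity and ordering in the critical case.} You justify the contradiction via ``$E(u)\ge c^{po}$, since $u$ is a solution, hence $P(u)=0$''. This is circular: $c^{po}$ is an infimum over $\mathcal{P}\subset S_a$, and at that stage you only know $\|u\|_2\le a$; whether mass escapes is precisely what strong convergence must rule out. What $P(u)=0$ actually gives is the weaker (and sufficient) bound $E(u)=\bigl(\frac{\eta_p}{2}-\frac{1}{2p}\bigr)\int_{\mathbb{R}^N}(I_\alpha\ast|u|^p)|u|^p+\bigl(\frac{\gamma_q}{2}-\frac1q\bigr)\mu\int_{\mathbb{R}^N}|u|^q\ge 0$, which is how the paper argues (it simply drops these nonnegative terms). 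Relatedly, $\ell=0$ only yields $\nabla u_n\to\nabla u$ in $L^2$; the $L^2$-part of strong convergence still requires $\lambda<0$, and your derivation of $\lambda<0$ requires $u\not\equiv 0$, which in the critical case follows from $\ell=0$ together with $c^{po}>0$ (a step you spell out only for $p<\bar p$). So the correct order is: $\ell=0$, then $u\not\equiv 0$, then $\lambda<0$, then full $H^1$ convergence --- not ``strong convergence, after which the sign of $\lambda$''. Also, until $u\in S_a$ is known, the multiplier identity should read $\lambda\|u\|_2^2=(\eta_p-1)\int_{\mathbb{R}^N}(I_\alpha\ast|u|^p)|u|^p+\mu(\gamma_q-1)\int_{\mathbb{R}^N}|u|^q$ rather than $\lambda a^2$; the sign conclusion survives. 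All the ingredients you list do suffice once rearranged this way.
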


\begin{proof}
The proof is divided into four steps.

\textbf{Step 1. We show  $\{u_n\}$ is bounded in
$H^1(\mathbb{R}^N)$.} Since $\{u_n\}\subset S_a$, it is enough to
show that $\{\|\nabla u_n\|_2^2\}$ is bounded.

Case $q>q^*$, it follows from $P(u_n)=o_n(1)$ and
$E(u_n)=c^{po}+o_n(1)$ that
\begin{equation*}
E(u_n)=\left(\frac{\eta_p}{2}-\frac{1}{2p}\right)\int_{\mathbb{R}^N}(I_\alpha\ast|u_n|^p)|u_n|^p
+\left(\frac{\gamma_q}{2}-\frac{1}{q}\right)\mu\int_{\mathbb{R}^N}|u_n|^q+o_n(1),
\end{equation*}
and then
\begin{equation*}
\left(\frac{\eta_p}{2}-\frac{1}{2p}\right)\int_{\mathbb{R}^N}(I_\alpha\ast|u_n|^p)|u_n|^p
+\left(\frac{\gamma_q}{2}-\frac{1}{q}\right)\mu\int_{\mathbb{R}^N}|u_n|^q\leq
C.
\end{equation*}
Using $P(u_n)=o_n(1)$ again yields that
\begin{equation*}
\int_{\mathbb{R}^N}|\nabla
u_n|^2=\eta_p\int_{\mathbb{R}^N}(I_\alpha\ast|u_n|^p)|u_n|^p
+\mu\gamma_q\int_{\mathbb{R}^N}|u_n|^q+o_n(1)\leq C.
\end{equation*}

Case $q=q^*$, from the equality
\begin{equation*}
E(u_n)=\left(\frac{\eta_p}{2}-\frac{1}{2p}\right)\int_{\mathbb{R}^N}(I_\alpha\ast|u_n|^p)|u_n|^p
+o_n(1),
\end{equation*}
we know that $\{\int_{\mathbb{R}^N}(I_\alpha\ast|u_n|^p)|u_n|^p\}$
is bounded. Suppose by contradiction that  $\{\|\nabla u_n\|_2^2\}$
is not bounded, then by $P(u_n)=o_n(1)$, we obtain that
\begin{equation*}
\lim_{n\to \infty}\frac{\mu
\gamma_q\int_{\mathbb{R}^N}|u_n|^q}{\int_{\mathbb{R}^N}|\nabla
u_n|^2}=1,
\end{equation*}
which  contradicts the fact
\begin{equation*}
\mu \gamma_q\int_{\mathbb{R}^N}|u_n|^q\leq\mu
\gamma_qC_{N,q}^qa^{q(1-\gamma_q)}\|\nabla u_n\|_2^{2}<\|\nabla
u_n\|_2^{2}.
\end{equation*}
Hence, $\{\|\nabla u_n\|_2^2\}$ is bounded.

There exists $u\in H_{r}^1(\mathbb{R}^N)$ such that, up to a
subsequence, $u_n\rightharpoonup u$ weakly in $H^1(\mathbb{R}^N)$,
$u_n\to u$ strongly in $L^t(\mathbb{R}^N)$ with $t\in (2,2^*)$ and
$u_n\to u$ a.e. in $\mathbb{R}^N$.

\textbf{Step 2.  We claim that $u\not\equiv0$.} Suppose by
contradiction that $u\equiv 0$.

Case $p<\bar{p}$. By  the assumptions on $p$ and $q$, we have
\begin{equation}\label{e3.25}
\begin{split}
E(u_n)&=\left(\frac{\eta_p}{2}-\frac{1}{2p}\right)\int_{\mathbb{R}^N}(I_\alpha\ast|u_n|^p)|u_n|^p
+\left(\frac{\gamma_q}{2}-\frac{1}{q}\right)\mu\int_{\mathbb{R}^N}|u_n|^q+o_n(1)\\
&=o_n(1),
\end{split}
\end{equation}
which contradicts  $E(u_n)\to c^{po}>0$.

Case $p=\bar{p}$. By using $E(u_n)=c^{po}+o_n(1)$, $P(u_n)=o_n(1)$,
$\int_{\mathbb{R}^N}|u_n|^{q}=o_n(1)$ and (\ref{e3.16}), we get that
\begin{equation*}
E(u_n)=\left(\frac{1}{2}-\frac{1}{2p\eta_p}\right)\int_{\mathbb{R}^N}|\nabla
u_n|^2+o_n(1)
\end{equation*}
and
\begin{equation}\label{e4.4}
\begin{split}
\int_{\mathbb{R}^N}|\nabla u_n|^2&=\eta_{p}\int_{\mathbb{R}^N}(I_\alpha\ast|u_n|^p)|u_n|^{p}+o_n(1)\\
&\leq \eta_{p}\left(\frac{\int_{\mathbb{R}^N}|\nabla
u_n|^2}{S_\alpha}\right)^{p}+o_n(1).
\end{split}
\end{equation}
Since $c^{po}>0$, we obtain
$\liminf_{n\to\infty}\int_{\mathbb{R}^N}|\nabla u_n|^2>0$ and hence
$$\limsup_{n\to \infty}\|\nabla u_n\|_2^2\geq
S_\alpha^{\frac{N+\alpha}{2+\alpha}}.$$ Consequently,
\begin{equation*}
\begin{split}
c^{po}=\lim_{n\to\infty}\left\{\left(\frac{1}{2}-\frac{1}{2p\eta_{p}}\right)\int_{\mathbb{R}^N}|\nabla
u_n|^{2}+o_n(1)\right\}\geq
\frac{2+\alpha}{2(N+\alpha)}S_\alpha^{\frac{N+\alpha}{2+\alpha}},
\end{split}
\end{equation*}
which contradicts Lemma \ref{lem4.3}. So $u\not\equiv0$.

\textbf{Step 3. We show $u$ is a solution to (\ref{e1.1}) with some
$\lambda<0$.} Since $\{u_n\}$ is a Palais-Smale sequence of
$E|_{S_a}$, by the Lagrange multipliers rule, there exists
$\lambda_n$ such that
\begin{equation}\label{e3.10}
\int_{\mathbb{R}^N}\left(\nabla u_n\cdot \nabla \varphi-\lambda_n
u_n\varphi-(I_\alpha\ast|u_n|^p)|u_n|^{p-2}u_n\varphi-\mu|u_n|^{q-2}u_n\varphi\right)=o_n(1)\|\varphi\|
\end{equation}
for every $\varphi\in H^1(\mathbb{R}^N)$.  The choice $\varphi=u_n$
provides
\begin{equation}\label{e3.12}
\lambda_na^2=\int_{\mathbb{R}^N}|\nabla
u_n|^2-\int_{\mathbb{R}^N}(I_\alpha\ast|u_n|^p)|u_n|^{p}-\mu
\int_{\mathbb{R}^N}|u_n|^{q}+o_n(1)
\end{equation}
and the boundedness of $\{u_n\}$ in $H^1(\mathbb{R}^N)$  implies
that $\lambda_n$ is bounded as well; thus, up to a subsequence
$\lambda_n\to \lambda\in \mathbb{R}^N$. Furthermore, by using
$P(u_n)=o_n(1)$, (\ref{e3.12}), $\mu>0$, $\eta_p\in (0,1]$,
$\gamma_q\in(0,1)$ and $u_n\rightharpoonup u$ weakly in
$H^1(\mathbb{R}^N)$, we have
\begin{equation*}
\begin{split}
-\lambda_na^2&=(1-\eta_p)\int_{\mathbb{R}^N}(I_\alpha\ast|u_n|^p)|u_n|^{p}+\mu(1-\gamma_q)
\int_{\mathbb{R}^N}|u_n|^{q}+o_n(1)
\end{split}
\end{equation*}
and then
\begin{equation*}
-\lambda a^2\geq
(1-\eta_p)\int_{\mathbb{R}^N}(I_\alpha\ast|u|^p)|u|^{p}+\mu(1-\gamma_q)
\int_{\mathbb{R}^N}|u|^{q}>0,
\end{equation*}
which implies that $\lambda<0$. By using (\ref{e3.10}) and Lemma
\ref{lem33.3}, we obtain that
\begin{equation}\label{e4.3}
\begin{split}
&\int_{\mathbb{R}^N}\left(\nabla u\cdot \nabla \varphi-\lambda
u\varphi-(I_\alpha\ast|u|^p)|u|^{p-2}u\varphi
-\mu|u|^{q-2}u\varphi\right)\\
&=\lim_{n\to\infty}\int_{\mathbb{R}^N}\left(\nabla u_n\cdot \nabla
\varphi-\lambda_n
u_n\varphi-(I_\alpha\ast|u_n|^p)|u_n|^{p-2}u_n\varphi
-\mu|u_n|^{q-2}u_n\varphi\right)\\
&=\lim_{n\to\infty}o_n(1)\|\varphi\|=0,
\end{split}
\end{equation}
which implies that $u$ satisfies the equation
\begin{equation}\label{e3.13}
-\Delta u=\lambda u+(I_\alpha\ast|u|^p)|u|^{p-2}u+\mu|u|^{q-2}u.
\end{equation}
Thus, $P(u)=0$ by Lemma \ref{lem3.8}.

\textbf{Step 4. We show $u_n\to u$ strongly in $H^1(\mathbb{R}^N)$.}

Case $p<\bar{p}$. Choosing $\varphi=u_n-u$ in (\ref{e3.10}) and
(\ref{e4.3}), and subtracting, we obtain that
\begin{equation*}
\int_{\mathbb{R}^N}(|\nabla (u_n-u)|^2-\lambda|u_n-u|^2)\to 0.
\end{equation*}
Since $\lambda<0$, we have $u_n\to u$ strongly in
$H^1(\mathbb{R}^N)$.

Case $p=\bar{p}$. Set $v_n:=u_n-u$. Then we have
\begin{equation*}
\|u_n\|_2^2=\|u\|_2^2+\|v_n\|_2^2+o_n(1),\ \|\nabla
u_n\|_2^2=\|\nabla u\|_2^2+\|\nabla v_n\|_2^2+o_n(1),
\end{equation*}
\begin{equation*}
\|u_n\|_q^q=\|u\|_q^q+\|v_n\|_q^q+o_n(1)=\|u\|_q^q+o_n(1)
\end{equation*}
and
\begin{equation*}
\int_{\mathbb{R}^N}(I_\alpha\ast
|u_n|^p)|u_n|^p=\int_{\mathbb{R}^N}(I_\alpha\ast
|u|^p)|u|^p+\int_{\mathbb{R}^N}(I_\alpha\ast |v_n|^p)|v_n|^p+o_n(1),
\end{equation*}
which combined with $P(u_n)=o_n(1)$ and $P(u)=0$ gives that
\begin{equation}\label{e4.5}
\int_{\mathbb{R}^N}|\nabla
v_n|^2=\eta_{p}\int_{\mathbb{R}^N}(I_\alpha\ast|v_n|^p)|v_n|^{p}+o_n(1).
\end{equation}
Similarly to (\ref{e4.4}), we infer that
\begin{equation*}
\limsup_{n\to\infty}\|\nabla v_n\|_2^2\geq
S_\alpha^{\frac{N+\alpha}{2+\alpha}}\ \ \mathrm{or}\ \
\liminf_{n\to\infty}\|\nabla v_n\|_2^2=0.
\end{equation*}
If $\limsup_{n\to\infty}\|\nabla v_n\|_2^2\geq
S_\alpha^{\frac{N+\alpha}{2+\alpha}}$, then by $2p\eta_p\geq 2$,
$q\gamma_q\geq 2$, $\mu>0$ and (\ref{e4.5}),
\begin{equation*}
\begin{split}
E(u_n)&=\left(\frac{\eta_p}{2}-\frac{1}{2p}\right)\int_{\mathbb{R}^N}(I_\alpha\ast|u_n|^p)|u_n|^p
+\left(\frac{\gamma_q}{2}-\frac{1}{q}\right)\mu\int_{\mathbb{R}^N}|u_n|^q+o_n(1)\\
&\geq
\left(\frac{\eta_p}{2}-\frac{1}{2p}\right)\int_{\mathbb{R}^N}(I_\alpha\ast|v_n|^p)|v_n|^p+o_n(1)\\
&\geq
\frac{2+\alpha}{2(N+\alpha)}S_\alpha^{\frac{N+\alpha}{2+\alpha}}+o_n(1),
\end{split}
\end{equation*}
which contradicts $E(u_n)=c^{po}+o_n(1)$ and Lemma \ref{lem4.3}.
Thus $\liminf_{n\to\infty}\|\nabla v_n\|_2^2=0$ holds. So up to a
subsequence, $\nabla u_n\to \nabla u$ in $L^2(\mathbb{R}^N)$. Hence,
similarly to the  case  $p<\bar{p}$, we infer that $u_n\to u$
strongly in $H^1(\mathbb{R}^N)$. The proof is complete.
\end{proof}

Now we are ready to give the proof of Theorem \ref{thm1.1}.\\
\textbf{Proof of Theorem \ref{thm1.1}}.  Let $k>0$ be defined by
Lemma \ref{lem3.7}. Following the strategy in  \cite{Jeanjean-Le}
and \cite{Soave JDE} (see also \cite{Jeanjean 1997}), we consider
the augmented functional $\tilde{E}:\mathbb{R}\times
H^1(\mathbb{R}^N)\to \mathbb{R}$
 defined by
\begin{equation}\label{e3.3}
\begin{split}
\tilde{E}(s,u):=E(s\star
u)&=\frac{1}{2}e^{2s}\int_{\mathbb{R}^N}|\nabla
u|^2-\frac{1}{2p}e^{(Np-N-\alpha)s}\int_{\mathbb{R}^N}(I_\alpha\ast|u|^p)|u|^{p}\\
&\qquad-\frac{\mu}{q}e^{\left(\frac{N}{2}q-N\right)s}\int_{\mathbb{R}^N}|u|^{q},
\end{split}
\end{equation}
and consider the restriction $\tilde{E}|_{\mathbb{R}\times
S_{a,r}}$. Note that  $\tilde{E}\in C^1$. Denoting by $E^c$ the
closed sub-level set $\{u \in S_a:E(u)\leq c\}$, we introduce the
mini-max class
\begin{equation}\label{e3.55}
\Gamma_{r}:=\left\{\gamma=(\kappa,\beta)\in C([0,1],\mathbb{R}\times
S_{a,r}):\gamma(0)\in (0,\overline{A_k}), \gamma(1)\in
(0,E^0)\right\}
\end{equation}
with associated mini-max level
\begin{equation*}
c_{r}^{mp}:=\inf_{\gamma\in \Gamma_r}\max_{(s,u)\in
\gamma([0,1])}\tilde{E}(s,u).
\end{equation*}

Let $u\in S_{a,r}$. Since $\int_{\mathbb{R}^N}|\nabla (s\star
u)|^2\to 0^+$ as $s\to -\infty$ and $E(s\star u)\to -\infty$ as
$s\to +\infty$, there exist $s_0\ll1$ and $s_1\gg 1$ such that
\begin{equation}\label{e3.6}
\gamma_u:\tau\in [0,1]\mapsto (0, ((1-\tau)s_0+\tau s_1)\star u)\in
\mathbb{R}\times S_{a,r}
\end{equation}
is a path in $\Gamma_r$. The continuity of $\gamma_u$ follows from
the fact that
\begin{equation}\label{e3.17}
(s,u)\in  \mathbb{R}\times H^1(\mathbb{R}^N) \mapsto (s\star u) \in
H^1(\mathbb{R}^N)\ \mathrm{ is\  continuous},
\end{equation}
see Lemma 3.5 in  \cite{Bartsch-Soave 2019}. Hence $c_r^{mp}$ is
well defined.

To study the value of $c_r^{mp}$, we also consider the mini-max
level
\begin{equation}\label{e3.21}
c^{mp}:=\inf_{\gamma\in \Gamma}\max_{(s,u)\in
\gamma([0,1])}\tilde{E}(s,u)
\end{equation}
with
\begin{equation}\label{e3.5}
\Gamma:=\left\{\gamma=(\kappa,\beta)\in C([0,1],\mathbb{R}\times
S_a):\gamma(0)\in (0,\overline{A_k}), \gamma(1)\in (0,E^0)\right\}.
\end{equation}
Obviously,  $c_r^{mp}\geq c^{mp}$.

For any $\gamma=(\kappa,\beta)\in \Gamma$, consider the function
\begin{equation*}
P_\gamma:\tau\in [0,1]\mapsto P(\kappa(\tau)\star \beta(\tau))\in
\mathbb{R}.
\end{equation*}
We have $P_\gamma(0)=P(\beta(0))>0$ by Lemma \ref{lem3.7}, and by
Lemma \ref{lem3.6}, $P_\gamma(1)=P(\beta(1))<0$ since
$E(\beta(1))\leq 0$. Moreover, $P_\gamma$ is continuous by
(\ref{e3.17}), and hence there exists $\tau_\gamma\in (0, 1)$ such
that $P_\gamma(\tau_\gamma)=0$, namely
$\kappa(\tau_\gamma)\star\beta(\tau_\gamma)\in \mathcal{P}$; this
implies that
\begin{equation}\label{e3.22}
\max_{\gamma([0,1])}\tilde{E}\geq
\tilde{E}(\gamma(\tau_\gamma))=E(\kappa(\tau_\gamma)\star\beta(\tau_\gamma))\geq
\inf_{\mathcal{P}}E=c^{po},
\end{equation}
and consequently $c^{mp}\geq c^{po}$.

For any $u\in \mathcal{P}$, let $|u|^*$ be the Schwartz
symmetrization rearrangement of $|u|$. Since $\||u|^*\|_t=\|u\|_t$
with $t\in[1,\infty)$, $\|\nabla (|u|^*)\|_2\leq \|\nabla u\|_2$ and
\begin{equation*}
\int_{\mathbb{R}^N}(I_\alpha\ast(|u|^*)^p)(|u|^*)^p\geq
\int_{\mathbb{R}^N}(I_\alpha\ast|u|^p)|u|^p,
\end{equation*}
we obtain that $\Psi_{|u|^*}(s)\leq \Psi_u(s)$ for any $s\in
\mathbb{R}$, where $\Psi_u(s)$ is defined in (\ref{e3.4}). Let $s_u$
be defined by Lemma \ref{lem3.6}  be such that $P(s_u\star u)=0$.
Then
\begin{equation*}
E(u)=\Psi_u(0)=\Psi_u(s_u)\geq \Psi_u(s_{|u|^*})\geq
\Psi_{|u|^*}(s_{|u|^*}).
\end{equation*}
Since $s_{|u|^*}\star |u|^*\in \mathcal{P}\cap H_r^1(\mathbb{R}^N)$,
we have that
\begin{equation*}
E(u)\geq \inf_{\mathcal{P}\cap H_r^1(\mathbb{R}^N)}E(u),
\end{equation*}
which implies that
\begin{equation}\label{e3.23}
c^{po}=\inf_{\mathcal{P}}E(u)\geq \inf_{\mathcal{P}\cap
H_r^1(\mathbb{R}^N)}E(u).
\end{equation}

For any $u\in \mathcal{P}\cap H_r^1(\mathbb{R}^N)$,  $\gamma_u$
defined in (\ref{e3.6}) is a path in $\Gamma_r$ with
\begin{equation}\label{e3.24}
E(u)=\max_{\gamma_u([0,1])}\tilde{E}\geq c_r^{mp},
\end{equation}
which implies
\begin{equation*}
\inf_{\mathcal{P}\cap H_r^1(\mathbb{R}^N)}E(u)\geq c_r^{mp}.
\end{equation*}

Now, we have proved that
\begin{equation*}
c^{po}=c^{mp}=c_r^{mp}>\sup_{(\overline{A_k}\cup E^0)\cap
S_{a,r}}E=\sup_{((0,\overline{A_k})\cup (0,E^0))\cap
(\mathbb{R}\times S_{a,r})}\tilde{E}.
\end{equation*}
Using the terminology of Section 5 in \cite{Ghoussoub 93}, this
means that $\{\gamma([0, 1]) :\gamma\in \Gamma_r\}$ is a homotopy
stable family of compact subsets of $\mathbb{R}\times S_{a,r}$ with
extended closed boundary $(0, \overline{A_k})\cup(0, E^0)$, and that
the super-level set $\{\tilde{E}\geq c^{po}\}$ is a dual set for
$\Gamma_r$, in the sense that assumptions ($F'1$) and ($F'2$) in
Theorem 5.2 of \cite{Ghoussoub 93} are satisfied. Therefore, taking
any minimizing sequence $\{\gamma_n=(\kappa_n, \beta_n)\}\subset
\Gamma_r$ for $c^{po}$ with the property that $\kappa_n\equiv0$ and
 $\beta_n(\tau)\geq 0$ a.e. in $\mathbb{R}^N$ for every $\tau\in[0, 1]$, there exists a Palais-Smale sequence
$\{(s_n, w_n)\}\subset \mathbb{R}\times S_{a,r}$ for
$\tilde{E}|_{\mathbb{R}\times S_{a,r}}$ at level $c^{po}$, that is,
$\tilde{E}(s_n, w_n)\to c^{po}$,
\begin{equation}\label{e3.7}
\partial_s\tilde{E}(s_n, w_n)\to 0\ \ \mathrm{and}\ \ \|\partial_u\tilde{E}(s_n,
w_n)\|_{({T_{w_n}S_{a,r}})^*}\to 0\ \ \mathrm{as}\ n\to\infty,
\end{equation}
with the additional property that
\begin{equation}\label{e3.8}
|s_n|+\mathrm{dist}_{H^1(\mathbb{R}^N)}(w_n,\beta_n([0,1]))\to 0 \
\mathrm{as}\ n\to\infty.
\end{equation}
By (\ref{e3.7}), the first condition in (\ref{e3.7}) reads
$P(s_n\star w_n)\to 0$, while the second condition gives that
\begin{equation*}
\begin{split}
e^{2s_n}\int_{\mathbb{R}^N}\nabla w_n\cdot \nabla \phi
-&e^{(Np-N-\alpha)s_n}\int_{\mathbb{R}^N}(I_\alpha\ast|w_n|^p)|w_n|^{p-2}w_n\phi\\
&-\mu
e^{(\frac{N}{2}q-N)s_n}\int_{\mathbb{R}^N}|w_n|^{q-2}w_n\phi=o_n(1)\|\phi\|
\end{split}
\end{equation*}
for every $\phi\in T_{w_n}S_{a,r}$. Since $\{s_n\}$ is bounded due
to (\ref{e3.8}), this is equivalent to
\begin{equation}\label{e3.9}
dE(s_n\star w_n)[s_n\star
\phi]=o_n(1)\|\phi\|=o_n(1)\|s_n\star\phi\| \ \ \mathrm{as}\
n\to\infty.
\end{equation}
Let $u_n:=s_n\star w_n$. By Lemma 5.8 in \cite{Soave JDE}, equation
(\ref{e3.9}) establishes that $\{u_n\}\subset S_{a,r}$ is a
Palais-Smale sequence for $E|_{S_{a,r}}$ (thus a Palais-Smale
sequence for $E|_{S_{a}}$, since the problem is invariant under
rotations) at level $c^{po}$, with $P(u_n)\to 0$ as $n\to\infty$.

By Proposition \ref{pro1.1}, up to a subsequence, $u_n\to u$
strongly in $H^1(\mathbb{R}^N)$. Thus, $u\in S_a$ is a mountain pass
type normalized solution to (\ref{e1.1}) with $\lambda<0$ and
$E(u)=c^{po}$. By Lemma \ref{lem4.7}, $u$ is a normalized ground
state. The proof is complete.

\section{Proof of Theorem \ref{thm1.2}}
\setcounter{section}{4} \setcounter{equation}{0}

Firstly, we study the positivity of the normalized ground states to
(\ref{e1.1}). By Lemma \ref{lem4.7}, it is enough to prove the
following fact.

\begin{proposition}\label{lem4.5}
Assume the conditions in Theorem \ref{thm1.1} hold. If $u \in
\mathcal{P}$  such that $E(u)=c^{po}$, then $|u|\in \mathcal{P}$ and
$E(|u|)=c^{po}$. Moreover, $|u|>0$ in $\mathbb{R}^N$.
\end{proposition}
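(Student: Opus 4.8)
The plan is to run the whole argument through the fiber map $\Psi_u$ from (\ref{e3.4}), exploiting that the substitution $u\mapsto|u|$ leaves the mass, the $L^q$-norm and the nonlocal term untouched while it can only lower the kinetic energy. First I would record the elementary facts that $\||u|\|_t=\|u\|_t$ for $t\in[1,\infty)$, that
\begin{equation*}
\int_{\mathbb{R}^N}\bigl(I_\alpha\ast|u|^p\bigr)|u|^p
\end{equation*}
is unchanged under $u\mapsto|u|$ (since $\bigl||u|\bigr|=|u|$), and that the diamagnetic inequality gives $|\nabla|u||\le|\nabla u|$ a.e., hence $\|\nabla|u|\|_2\le\|\nabla u\|_2$. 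Consequently $|u|\in S_a$, and comparing the two fiber maps coefficient by coefficient yields
\begin{equation*}
\Psi_{|u|}(s)\le\Psi_u(s)\qquad\text{for every }s\in\mathbb{R},
\end{equation*}
with equality at a fixed $s$ precisely when $\|\nabla|u|\|_2=\|\nabla u\|_2$.

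Next I would pin down the rescaling parameter. Since $u\in\mathcal{P}$, Lemma \ref{lem3.6} gives $s_u=0$ and identifies it as the \emph{strict} global maximum of $\Psi_u$. Applying Lemma \ref{lem3.6} to $|u|$ produces a unique $s_{|u|}$ with $s_{|u|}\star|u|\in\mathcal{P}$, so that $\Psi_{|u|}(s_{|u|})\ge c^{po}$. Chaining the inequalities,
\begin{equation*}
c^{po}\le\Psi_{|u|}(s_{|u|})\le\Psi_u(s_{|u|})\le\Psi_u(0)=E(u)=c^{po},
\end{equation*}
forces every inequality to be an equality. In particular $\Psi_u(s_{|u|})=\Psi_u(0)=\max_s\Psi_u$, and the strict maximality of $s_u=0$ yields $s_{|u|}=0$; equivalently $P(|u|)=0$, i.e. $|u|\in\mathcal{P}$, and then $E(|u|)=\Psi_{|u|}(0)=\Psi_{|u|}(s_{|u|})=c^{po}$.

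For the strict positivity I would invoke Lemma \ref{lem3.3}: since $|u|\in\mathcal{P}$ and $E(|u|)=c^{po}$, the real, nonnegative function $w:=|u|$ solves (\ref{e1.1}) with some $\lambda<0$. Rewriting the equation as
\begin{equation*}
-\Delta w+(-\lambda)w=\bigl(I_\alpha\ast w^p\bigr)w^{p-1}+\mu w^{q-1}\ge0,
\end{equation*}
with $-\lambda>0$ and $w\ge0$, $w\not\equiv0$ (because $\|w\|_2=a>0$), after a standard elliptic bootstrap using the Hardy--Littlewood--Sobolev estimates of Remark \ref{rek1.31} to guarantee that $w$ is continuous, the strong maximum principle gives $w>0$ throughout $\mathbb{R}^N$.

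The step demanding the most care is the identity $s_{|u|}=0$: it is exactly here that the \emph{strict} maximality of the fiber map at its unique critical point, as furnished by Lemma \ref{lem3.6}, is needed to collapse the chain of inequalities into the conclusions that $|u|\in\mathcal{P}$ and that its energy is still $c^{po}$. By contrast, the final positivity step is comparatively routine once the requisite regularity of $w$ is secured; the only genuine subtlety there is justifying enough regularity to apply the strong maximum principle pointwise, which the Hardy--Littlewood--Sobolev framework of Remark \ref{rek1.31} supplies.
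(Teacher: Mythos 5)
Your proposal is correct and follows essentially the same route as the paper: both rest on the diamagnetic inequality together with the fiber-map Lemma \ref{lem3.6} to force $s_{|u|}=0$ via an equality chain pinned at $c^{po}$, and then on Lemma \ref{lem3.3} plus continuity and the strong maximum principle for the positivity of $|u|$. The only cosmetic difference is that the paper deduces $s_{|u|}\le 0$ from $P(|u|)\le 0$ and compares energies through the explicit scaling representation of $E$ on $\mathcal{P}$, whereas you compare the fiber maps pointwise ($\Psi_{|u|}\le\Psi_u$) and invoke the strict maximality of $\Psi_u$ at $0$; both collapse the same chain of inequalities.
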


\begin{proof}
It follows from
\begin{equation*}
\int_{\mathbb{R}^N}|\nabla |u||^2\leq \int_{\mathbb{R}^N}|\nabla
u|^2
\end{equation*}
that $P(|u|)\leq 0$. By Lemma \ref{lem3.6}, there exists
$s_{|u|}\leq 0$ such that $s_{|u|}\star |u|\in \mathcal{P}$. Thus,
\begin{equation*}
\begin{split}
&E(s_{|u|}\star
|u|)\\
&=\left(\frac{\eta_p}{2}-\frac{1}{2p}\right)\int_{\mathbb{R}^N}(I_\alpha\ast|s_{|u|}\star
|u||^p)|s_{|u|}\star |u||^p
+\left(\frac{\gamma_q}{2}-\frac{1}{q}\right)\mu\int_{\mathbb{R}^N}|s_{|u|}\star |u||^q\\
&=\left(\frac{\eta_p}{2}-\frac{1}{2p}\right)e^{(Np-N-\alpha)s_{|u|}}\int_{\mathbb{R}^N}(I_\alpha\ast|u|^p)|u|^p+
\left(\frac{\gamma_q}{2}-\frac{1}{q}\right)\mu e^{(\frac{N}{2}q-N)s_{|u|}}\int_{\mathbb{R}^N}|u|^q\\
&\leq
\left(\frac{\eta_p}{2}-\frac{1}{2p}\right)\int_{\mathbb{R}^N}(I_\alpha\ast|u|^p)|u|^p+
\left(\frac{\gamma_q}{2}-\frac{1}{q}\right)\mu\int_{\mathbb{R}^N}|u|^q\\
&=E(u)=c^{po}.
\end{split}
\end{equation*}
By the definition of $c^{po}$, we have $s_{|u|}=0$, $|u|\in
\mathcal{P}$ and $E(|u|)=c^{po}$.

By Lemma \ref{lem3.3}, there exists $\lambda<0$ such that  $|u|$
satisfies the equation
\begin{equation*}
-\Delta u=\lambda u+(I_\alpha\ast |u|^p)|u|^{p-2}u+\mu|u|^{q-2}u.
\end{equation*}
Since $|u|$ is continuous by Theorem 2.1 in \cite{Li-Ma 2020}, the
strong maximum principle implies  that $|u|>0$ in $\mathbb{R}^N$.
\end{proof}

Nextly, we study the radial symmetry of the normalized ground states
to (\ref{e1.1}). We follow the strategy of \cite{Moroz-Schaftingen
2015}. The argument relies on polarizations. So we first recall some
theories of polarizations (\cite{{Brock-Solynin
2000},{Moroz-Schaftingen JFA 2013},{Van Schaftingen-Willem 2008}}).

Assume that $H\subset \mathbb{R}^N$ is a closed half-space and that
$\sigma_H$ is the reflection with respect to $\partial H$. The
polarization $u^H : \mathbb{R}^N \to \mathbb{R}$ of $u :
\mathbb{R}^N \to \mathbb{R}$ is defined for $x\in \mathbb{R}^N$ by
\begin{equation*}
u^H(x)=\left\{
\begin{array}{ll}
\max\{u(x),u(\sigma_H(x))\}, &\ \mathrm{if}\ x\in H,\\
\min\{u(x),u(\sigma_H(x))\}, &\ \mathrm{if}\ x\not\in H.
\end{array}
\right.
\end{equation*}

We will use the following standard property of polarizations (Lemma
5.3 in \cite{Brock-Solynin 2000}).

\begin{lemma}\label{lem4.1}
(Polarization and Dirichlet integrals). If $u\in H^1(\mathbb{R}^N)$,
then $u^H\in H^1(\mathbb{R}^N)$ and
\begin{equation*}
\int_{\mathbb{R}^N}|\nabla u^H|^2=\int_{\mathbb{R}^N}|\nabla u|^2.
\end{equation*}
\end{lemma}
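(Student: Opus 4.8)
The plan is to reduce the computation to the half-space $H$ and then exploit that the polarization merely permutes the two values attached to each pair $\{x,\sigma_H(x)\}$. First I would record that $\sigma_H$ is an orthogonal reflection, hence an isometry with unit Jacobian that preserves gradient lengths, so that $|\nabla(u\circ\sigma_H)(x)|=|\nabla u(\sigma_H(x))|$. Writing $f:=u|_H$ and $g:=(u\circ\sigma_H)|_H$, the change of variables $y=\sigma_H(x)$ on $\mathbb{R}^N\setminus H$ gives
\begin{equation*}
\int_{\mathbb{R}^N}|\nabla u|^2=\int_H\left(|\nabla f|^2+|\nabla g|^2\right).
\end{equation*}
By the definition of $u^H$, on $H$ one has $u^H=\max\{f,g\}$ while $u^H\circ\sigma_H=\min\{f,g\}$, so the same change of variables yields
\begin{equation*}
\int_{\mathbb{R}^N}|\nabla u^H|^2=\int_H\left(|\nabla \max\{f,g\}|^2+|\nabla \min\{f,g\}|^2\right).
\end{equation*}

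Second, the identity then reduces to the pointwise lattice identity for Sobolev functions,
\begin{equation*}
|\nabla \max\{f,g\}|^2+|\nabla \min\{f,g\}|^2=|\nabla f|^2+|\nabla g|^2\quad\text{a.e. on }H.
\end{equation*}
I would recall that $f,g\in H^1(H)$ and that $\max\{f,g\},\min\{f,g\}\in H^1(H)$ with
\begin{equation*}
\nabla\max\{f,g\}=\nabla f\,\mathbf{1}_{\{f\geq g\}}+\nabla g\,\mathbf{1}_{\{f<g\}},\quad \nabla\min\{f,g\}=\nabla g\,\mathbf{1}_{\{f\geq g\}}+\nabla f\,\mathbf{1}_{\{f<g\}}
\end{equation*}
a.e.; these formulas use the standard fact that $\nabla f=\nabla g$ a.e. on the coincidence set $\{f=g\}$. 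Squaring and summing the two right-hand sides gives the pointwise identity, and integrating over $H$ then proves the equality of Dirichlet integrals.

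The main obstacle is to make rigorous that $u^H\in H^1(\mathbb{R}^N)$, which is delicate because $u^H$ is obtained by gluing $\max\{f,g\}$ on $H$ to the reflected $\min\{f,g\}$ across $\partial H$. The gluing produces no distributional jump precisely because $\sigma_H$ fixes $\partial H$ pointwise, so $f=g$ and hence $\max\{f,g\}=\min\{f,g\}$ on $\partial H$; thus the traces from the two sides of $\partial H$ agree and $u^H$ carries no singular part along $\partial H$. Combining this with $\max\{f,g\},\min\{f,g\}\in H^1(H)$ and the gradient bound just derived yields $u^H\in H^1(\mathbb{R}^N)$ together with $\int_{\mathbb{R}^N}|\nabla u^H|^2=\int_{\mathbb{R}^N}|\nabla u|^2$, completing the proof.
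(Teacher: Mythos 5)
Your proof is correct, but note that the paper does not prove this lemma at all: it is quoted as a known result (Lemma 5.3 of Brock and Solynin \cite{Brock-Solynin 2000}) and used as a black box, so your blind attempt should be compared with the standard literature proof rather than with anything in the text. Measured against that, your argument is essentially the canonical one and it is sound: you split $\mathbb{R}^N$ into $H$ and $\sigma_H(H)$, use that the reflection is an isometry with unit Jacobian to rewrite both Dirichlet integrals as integrals over $H$ involving $f=u|_H$ and $g=(u\circ\sigma_H)|_H$, and then invoke the lattice property of $H^1$ (the indicator-patched formulas for $\nabla\max\{f,g\}$ and $\nabla\min\{f,g\}$, together with the Stampacchia-type fact that $\nabla f=\nabla g$ a.e.\ on $\{f=g\}$) to get the pointwise identity $|\nabla\max\{f,g\}|^2+|\nabla\min\{f,g\}|^2=|\nabla f|^2+|\nabla g|^2$. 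Your handling of the one genuinely delicate point, namely $u^H\in H^1(\mathbb{R}^N)$, is also right: since $u\in H^1(\mathbb{R}^N)$ has a single trace on $\partial H$ and $\sigma_H$ fixes $\partial H$ pointwise, the traces of $f$ and $g$ coincide there, hence so do those of $\max\{f,g\}$ and $\min\{f,g\}$, and the glued function acquires no singular part along $\partial H$ by the standard gluing lemma. Two small points would make it airtight: (i) the $L^2$ half of the $H^1$ claim, which you omit but which is immediate because $\{u^H(x),u^H(\sigma_H(x))\}=\{u(x),u(\sigma_H(x))\}$ pointwise, so $\|u^H\|_2=\|u\|_2$; and (ii) the logical order, since the identity $\int_{\mathbb{R}^N}|\nabla u^H|^2=\int_H\left(|\nabla\max\{f,g\}|^2+|\nabla\min\{f,g\}|^2\right)$ already presupposes that $u^H$ is $H^1$ across $\partial H$, so the gluing step should formally precede that computation — a reordering you implicitly acknowledge but should state.
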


We shall also use a polarization inequality with equality cases
(Lemma 5.3 in \cite{Moroz-Schaftingen JFA 2013}).

\begin{lemma}\label{lem4.2}
(Polarization and nonlocal integrals). Let $\alpha\in(0,N)$, $u\in
L^{\frac{2N}{N+\alpha}} (\mathbb{R}^N)$ and $H\subset \mathbb{R}^N$
be a closed half-space. If $u\geq 0$, then
\begin{equation*}
\int_{\mathbb{R}^N}\int_{\mathbb{R}^N}\frac{u(x)u(y)}{|x-y|^{N-\alpha}}dxdy
\leq
\int_{\mathbb{R}^N}\int_{\mathbb{R}^N}\frac{u^H(x)u^H(y)}{|x-y|^{N-\alpha}}dxdy,
\end{equation*}
with equality if and only if either $u^H = u$ or $u^H = u\circ
\sigma_H$.
\end{lemma}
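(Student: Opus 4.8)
Since $u\in L^{2N/(N+\alpha)}(\mathbb{R}^N)$, Lemma \ref{lem HLS} guarantees that the double integral is finite, and because polarization preserves every Lebesgue norm we also have $u^H\in L^{2N/(N+\alpha)}(\mathbb{R}^N)$ with a finite integral; all rearrangements below are therefore legitimate. The plan is to reduce both integrals to integrals over $H\times H$ and then compare integrands pointwise. Choosing coordinates so that $H=\{x_N\geq 0\}$ and writing $\bar x:=\sigma_H(x)$, the reflection $\sigma_H$ is a measure-preserving isometry mapping $H$ onto $\overline{\mathbb{R}^N\setminus H}$, and $|\bar x-y|=|x-\bar y|$. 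Splitting $\mathbb{R}^N=H\cup(\mathbb{R}^N\setminus H)$ in each variable and substituting $x\mapsto\bar x$, $y\mapsto\bar y$ on the reflected pieces, I would first record the geometric identity $|x-\bar y|^2-|x-y|^2=4x_Ny_N\geq 0$ for $x,y\in H$, so that, setting $K_1:=|x-y|^{-(N-\alpha)}$ and $K_2:=|x-\bar y|^{-(N-\alpha)}$ and using $N-\alpha>0$, one has $K_1\geq K_2>0$ with equality exactly when $x_Ny_N=0$. With $a:=u(x)$, $a':=u(\bar x)$, $b:=u(y)$, $b':=u(\bar y)$ this change of variables yields
\begin{equation*}
\int_{\mathbb{R}^N}\int_{\mathbb{R}^N}\frac{u(x)u(y)}{|x-y|^{N-\alpha}}\,dx\,dy
=\int_H\int_H\bigl[K_1(ab+a'b')+K_2(ab'+a'b)\bigr]\,dx\,dy,
\end{equation*}
and the analogous identity for $u^H$ with $a,a',b,b'$ replaced by $A:=\max\{a,a'\}$, $A':=\min\{a,a'\}$, $B:=\max\{b,b'\}$, $B':=\min\{b,b'\}$.

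The heart of the matter is then the pointwise inequality, for fixed $x,y\in H$,
\begin{equation*}
K_1(AB+A'B')+K_2(AB'+A'B)\geq K_1(ab+a'b')+K_2(ab'+a'b).
\end{equation*}
I would prove this by the elementary rearrangement observation that $\{AB+A'B',\,AB'+A'B\}=\{ab+a'b',\,ab'+a'b\}$ as an unordered pair, with $AB+A'B'=\max$ of the two and $AB'+A'B=\min$ (pairing larger with larger); since $K_1\geq K_2$, assigning the larger weight to the larger value can only increase the sum. Integrating over $H\times H$ gives the asserted inequality $I(u)\leq I(u^H)$.

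For the equality statement, which I expect to be the delicate step, I would examine when the nonnegative integrand difference vanishes a.e. A short computation with $(a-a')(b-b')=-(T_2-T_1)$, where $T_1=ab+a'b'$ and $T_2=ab'+a'b$, shows the pointwise difference equals $(K_1-K_2)(T_2-T_1)_+$, which is strictly positive precisely when $x_N,y_N>0$ \emph{and} $(u(x)-u(\bar x))(u(y)-u(\bar y))<0$. Hence $I(u)=I(u^H)$ forces $(u(x)-u(\bar x))(u(y)-u(\bar y))\geq 0$ for a.e.\ $x,y$ in the interior of $H$; that is, the function $w(x):=u(x)-u(\sigma_H(x))$ does not change sign a.e.\ on $H$. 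If $w\geq 0$ a.e.\ then $u^H=u$, while if $w\leq 0$ a.e.\ then $u^H=u\circ\sigma_H$, which is exactly the claimed dichotomy. The main obstacle is this last implication: one must argue carefully that a product condition $w(x)w(y)\geq 0$ holding for almost every pair indeed prevents $w$ from taking values of both signs on sets of positive measure, and must handle the boundary $\partial H$ and the possible vanishing of $w$ without spuriously splitting the alternatives.
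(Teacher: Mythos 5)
Your argument is correct and complete in all essentials, but one remark on context is in order: the paper does not prove this lemma at all. It is quoted as a known result (Lemma 5.3 of the cited Moroz--Van Schaftingen paper, J. Funct. Anal. 265 (2013)), so the only ``proof'' in the paper is a citation, and your proposal is in effect a self-contained reconstruction of the standard argument from that reference: reduction of both integrals to $H\times H$ via the reflection, the two-point rearrangement inequality with weights $K_1\geq K_2$, and the equality analysis. Your identities check out: $|x-\bar y|^2-|x-y|^2=4x_Ny_N$ for $x,y\in H$, the unordered-pair identity $\{AB+A'B',\,AB'+A'B\}=\{ab+a'b',\,ab'+a'b\}$ with $AB+A'B'$ the maximum, and the pointwise difference $(K_1-K_2)(T_2-T_1)_+$ with $T_2-T_1=-(a-a')(b-b')$. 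The step you flag as the main obstacle is indeed the crux of the equality case, but it closes with a one-line Fubini argument: writing $w(x)=u(x)-u(\sigma_H(x))$ and letting $\Omega_+=\{w>0\}$ and $\Omega_-=\{w<0\}$ (as subsets of the interior of $H$), if both had positive measure then $\Omega_+\times\Omega_-$ would be a positive-measure set of pairs on which $w(x)w(y)<0$, contradicting the almost-everywhere conclusion; hence one of them is null, and $w\geq 0$ a.e. gives $u^H=u$ while $w\leq 0$ a.e. gives $u^H=u\circ\sigma_H$. The boundary $\partial H$ is Lebesgue-null so it causes no trouble, and if $w=0$ a.e. both alternatives hold simultaneously, so the (non-exclusive) dichotomy is not split spuriously. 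For completeness you should also record the trivial converse direction, which you leave implicit: $u^H=u$ gives equality tautologically, and $u^H=u\circ\sigma_H$ gives equality because $\sigma_H$ is a measure-preserving isometry of $\mathbb{R}^N$.
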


The last tool that we need is a characterization of symmetric
functions by polarizations (Proposition 3.15 in \cite{Van
Schaftingen-Willem 2008}, Lemma 5.4 in \cite{Moroz-Schaftingen JFA
2013}).

\begin{lemma}\label{lem4.4}
(Symmetry and polarization). Assume that $u\in L^2(\mathbb{R}^N)$ is
nonnegative. There exist $x_0\in \mathbb{R}^N$ and a non-increasing
function $v : (0,\infty) \to \mathbb{R}$ such that for almost every
$x\in \mathbb{R}^N$, $u(x)=v(|x-x_0|)$ if and only if for every
closed half-space $H\subset \mathbb{R}^N$, $u^H = u$ or $u^H =
u\circ \sigma_H$.
\end{lemma}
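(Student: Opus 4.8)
The plan is to prove the two implications of the equivalence separately. The implication from radial symmetry to polarization-invariance is a direct computation, while the converse is the substantive part.

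Assume first that $u(x) = v(|x-x_0|)$ with $v$ non-increasing. Write a closed half-space as $H = \{x : x\cdot e \geq \lambda\}$ for a unit vector $e$ and $\lambda\in\mathbb{R}$, so that $\sigma_H(x) = x - 2(x\cdot e - \lambda)e$. Since reflection leaves the component of $x - x_0$ orthogonal to $e$ unchanged,
\begin{equation*}
|\sigma_H(x) - x_0|^2 - |x - x_0|^2 = 4\,(x\cdot e - \lambda)(x_0\cdot e - \lambda).
\end{equation*}
If $x_0\in H$, then the right-hand side is nonnegative for every $x\in H$, so $|x-x_0|\le|\sigma_H(x)-x_0|$, and the monotonicity of $v$ gives $u(x)\ge u(\sigma_H(x))$; the reverse inequality holds for $x\notin H$, whence $u^H = u$. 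If $x_0\notin H$, the same computation yields $u^H = u\circ\sigma_H$. This settles the forward implication.

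For the converse I would pass to superlevel sets. Because polarization commutes with the operation $u\mapsto\{u>t\}$, the hypothesis that $u^H\in\{u,\,u\circ\sigma_H\}$ transfers, for each $H$ and with the same alternative holding simultaneously at every level, to the set dichotomy $A_t^H\in\{A_t,\,\sigma_H(A_t)\}$ (modulo null sets) for $A_t:=\{u>t\}$, which have finite measure since $u\in L^2$. I would then establish the set-level statement: a set $A$ of finite positive measure obeying this dichotomy for every $H$ agrees, up to a null set, with a ball. The mechanism is that polarization transports mass strictly across $\partial H$ --- and hence moves the barycenter of $A$ strictly into $H$ --- unless no mass is moved at all; choosing $H$ so that its boundary passes through the barycenter $x_0 := b(A)$ forces the reflected barycenter to coincide with $x_0$, which collapses the dichotomy to $A=\sigma_H(A)$. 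Thus $A$ is invariant under every reflection fixing $x_0$, i.e. radially symmetric about $x_0$. The common value of $x_0$ across levels is pinned down by testing half-spaces whose boundary separates the barycenters of two level sets along the segment joining them: for such $H$ neither global alternative is consistent unless the two barycenters coincide.

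The main obstacle is the converse, and within it the delicate point is upgrading \emph{radially symmetric about $x_0$} to \emph{a ball}, equivalently, producing the non-increasing profile $v$. Symmetry about all hyperplanes through $x_0$ only shows each superlevel set is a union of concentric shells; ruling out \emph{gaps} (which would correspond to a non-monotone $v$) requires testing half-spaces $H$ with $x_0$ in their interior, for which the dichotomy $A_t^H\in\{A_t,\sigma_H(A_t)\}$ fails whenever $A_t$ is annular rather than a genuine ball. Once every $A_t$ is shown to be the ball $B(x_0,r(t))$, the nesting $A_s\subseteq A_t$ for $s>t$ makes $r$ non-increasing, and $v(\rho):=\sup\{t:\rho<r(t)\}$ is the required non-increasing profile with $u(x)=v(|x-x_0|)$ almost everywhere. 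Throughout, the null-set ambiguities in the pointwise definition of $u^H$ and the measure-zero boundaries $\partial H$ must be tracked with care, but this is routine.
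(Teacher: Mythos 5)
A preliminary remark on the comparison itself: the paper offers no proof of Lemma \ref{lem4.4} --- it is quoted as a known result (Proposition 3.15 in \cite{Van Schaftingen-Willem 2008}, Lemma 5.4 in \cite{Moroz-Schaftingen JFA 2013}) --- so your attempt can only be judged on its own merits, not against an argument in the text. Your forward implication is correct and complete: the identity $|\sigma_H(x)-x_0|^2-|x-x_0|^2=4(x\cdot e-\lambda)(x_0\cdot e-\lambda)$ together with the monotonicity of $v$ gives exactly the stated dichotomy, and the level-set transfer $\{u^H>t\}=\{u>t\}^H$, with the same alternative holding at every level $t$, which you use to launch the converse, is also correct.

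The converse, however, contains a genuine gap: every decisive step in it runs through the barycenter $b(A_t)=|A_t|^{-1}\int_{A_t}x\,dx$ of the superlevel sets, and this object need not exist. From $u\in L^2(\mathbb{R}^N)$ you only get $|A_t|\le \|u\|_2^2/t^2<\infty$, and a set of finite Lebesgue measure can have infinite first moment: take $A=\bigcup_n B(4^n e_1,\,2^{-n/N})$, which is a superlevel set of the $L^2$ function $\mathbf{1}_A$. Boundedness of $A_t$ (or integrability of $|x|$ over $A_t$) is only known a posteriori, once the lemma is proved; nothing available at the stage where you introduce $b(A_t)$ rules out such behaviour. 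Consequently the claims ``polarization moves the barycenter strictly into $H$ unless no mass is moved,'' ``choose $\partial H$ through $b(A)$,'' and the cross-level separation argument pinning down a common center are all statements about a possibly undefined quantity. The gap is repairable, but not by a routine remark: the natural substitute is the half-space mass split, which exists for every finite-measure set (if $A^H=A$ then $A$ is a $\sigma_H$-symmetric set union a set contained in $H$, whence $|A\cap H|\ge|A|/2$; if $A^H=\sigma_H(A)$ then $|A\setminus H|\ge|A|/2$), and the common-center and reflection-symmetry steps must be redone with medians; the point where you ``collapse the dichotomy'' at the center is exactly where this becomes delicate, since a median, unlike a barycenter, can stay put while mass is transported. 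Separately, the step you yourself flag as the crux --- excluding radial level sets with gaps --- is asserted rather than proved; it is true and provable within your framework (for $H$ containing the center in its interior, $A^H=\sigma_H(A)$ is impossible by the mass split combined with radial symmetry, so $A^H=A$ must hold, and a gap-below-mass radial profile violates this for a reflecting hyperplane placed at a suitable distance), but as written the proposal leaves both hard points of the converse as claims rather than arguments.
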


Now we are ready to prove the radial symmetry of the positive
normalized ground states to (\ref{e1.1}).

\begin{proposition}\label{lem4.6}
 Assume that the conditions in Theorem
\ref{thm1.1} hold. Let $u$ be a positive normalized ground state to
(\ref{e1.1}), then there exist $x_0\in \mathbb{R}^N$ and a
non-increasing positive function $v : (0,\infty) \to \mathbb{R}$
such that $u(x)=v(|x-x_0|)$ for almost every $x\in \mathbb{R}^N$.
\end{proposition}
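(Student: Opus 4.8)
The plan is to follow the polarization strategy of \cite{Moroz-Schaftingen 2015}: I would show that for \emph{every} closed half-space $H\subset\mathbb{R}^N$ the polarization $u^H$ satisfies $u^H=u$ or $u^H=u\circ\sigma_H$, and then invoke Lemma \ref{lem4.4} to conclude that $u$ is radial and non-increasing about some $x_0$; positivity of the profile $v$ is immediate from $u>0$.

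Fix a closed half-space $H$ and consider $u^H$. First I would record three facts about the polarization. Since polarization rearranges level sets, $\|u^H\|_t=\|u\|_t$ for all $t\in[1,\infty)$, so in particular $u^H\in S_a$ and $\|u^H\|_q^q=\|u\|_q^q$. By Lemma \ref{lem4.1}, $\|\nabla u^H\|_2=\|\nabla u\|_2$. Finally, since $t\mapsto t^p$ is increasing on $[0,\infty)$ one has $(u^H)^p=(u^p)^H$, so applying Lemma \ref{lem4.2} to the nonnegative function $u^p$ gives
\begin{equation*}
\int_{\mathbb{R}^N}(I_\alpha\ast(u^H)^p)(u^H)^p\geq \int_{\mathbb{R}^N}(I_\alpha\ast u^p)u^p,
\end{equation*}
with equality if and only if $u^H=u$ or $u^H=u\circ\sigma_H$. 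Because the nonlocal term is the only term of $\Psi_u$ (defined in (\ref{e3.4})) that changes under polarization, and it enters with a negative sign, these three facts yield $\Psi_{u^H}(s)\leq \Psi_u(s)$ for all $s\in\mathbb{R}$.

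Now I would exploit the ground-state minimality. By Lemma \ref{lem4.7}, $E(u)=c^{po}$ and $u\in\mathcal{P}$, so $s_u=0$ and, by Lemma \ref{lem3.6}, $E(u)=\Psi_u(0)=\max_{s\in\mathbb{R}}\Psi_u(s)=c^{po}$. Let $s_{u^H}$ be the scaling parameter from Lemma \ref{lem3.6} associated to $u^H\in S_a$, so that $s_{u^H}\star u^H\in\mathcal{P}$ and hence $\Psi_{u^H}(s_{u^H})=E(s_{u^H}\star u^H)\geq c^{po}$. Chaining the inequalities gives
\begin{equation*}
c^{po}\leq \Psi_{u^H}(s_{u^H})\leq \Psi_u(s_{u^H})\leq \max_{s\in\mathbb{R}}\Psi_u(s)=c^{po},
\end{equation*}
so every inequality is an equality. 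In particular $\Psi_u(s_{u^H})=\max_{s}\Psi_u(s)$, and since by Lemma \ref{lem3.6} this maximum is strict and attained only at $s_u=0$, I conclude $s_{u^H}=0$. Then $\Psi_{u^H}(0)=\Psi_u(0)$, which---the gradient and $L^q$ contributions being equal---forces
\begin{equation*}
\int_{\mathbb{R}^N}(I_\alpha\ast(u^H)^p)(u^H)^p=\int_{\mathbb{R}^N}(I_\alpha\ast u^p)u^p.
\end{equation*}
This is precisely the equality case of Lemma \ref{lem4.2}, so $u^H=u$ or $u^H=u\circ\sigma_H$. As $H$ was arbitrary, Lemma \ref{lem4.4} gives the desired representation $u(x)=v(|x-x_0|)$ with $v$ non-increasing, and $v>0$ since $u>0$.

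I expect the main obstacle to be the bookkeeping that converts the ground-state minimality into the \emph{equality} case of the nonlocal polarization inequality: polarization preserves $S_a$, the gradient, and the $L^q$ norm while only possibly increasing the nonlocal term, so one must pass through the fiber map $\Psi$ and use the strict maximality and uniqueness in Lemma \ref{lem3.6} to force the scaling parameter $s_{u^H}$ back to $0$ before the equality in Lemma \ref{lem4.2} can be extracted.
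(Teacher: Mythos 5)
Your proof is correct, and each step is justified by the lemmas you invoke: polarization preserves all $L^t$ norms and (Lemma \ref{lem4.1}) the Dirichlet integral, it commutes with the increasing map $t\mapsto t^p$ so that Lemma \ref{lem4.2} applies to $u^p$, hence $\Psi_{u^H}\leq\Psi_u$ pointwise; then Lemma \ref{lem4.7}, the definition $c^{po}=\inf_{\mathcal{P}}E$ and the uniqueness/strict-maximum statement in Lemma \ref{lem3.6} close the chain of inequalities and force equality in the nonlocal polarization inequality, after which Lemma \ref{lem4.4} finishes. Where you genuinely diverge from the paper is in how ground-state minimality is converted into that equality. The paper polarizes an entire mountain-pass path: it takes $\gamma_u(\tau)=(0,((1-\tau)s_0+\tau s_1)\star u)\in\Gamma$, defines $\gamma_u^H(\tau)=(0,(\beta_u(\tau))^H)$, verifies that this polarized path is still admissible in $\Gamma$, and uses the identity $c^{mp}=c^{po}$ established in the proof of Theorem \ref{thm1.1} to get $\max_\tau\tilde{E}(\gamma_u^H(\tau))\geq c^{po}$; since off the special parameter $\tau=-s_0/(s_1-s_0)$ the polarized path lies strictly below $c^{po}$, the maximum must sit at $u^H$, giving $E(u^H)=c^{po}=E(u)$. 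Your argument instead stays entirely on the Poho\v{z}aev manifold: $s_{u^H}\star u^H\in\mathcal{P}$ gives $\Psi_{u^H}(s_{u^H})\geq c^{po}$ directly. This is leaner, since you never need the mini-max class $\Gamma$, the identity $c^{mp}=c^{po}$, or the continuity in $H^1(\mathbb{R}^N)$ of $\tau\mapsto(\beta_u(\tau))^H$ needed for $\gamma_u^H$ to be an admissible path (a point the paper justifies only briefly); what the paper's scheme buys in exchange is robustness, as it follows the original Moroz--Van Schaftingen path-polarization argument, which does not rely on the fiber map having a unique critical point. One further simplification is available to you: forcing $s_{u^H}=0$ is unnecessary, because
\begin{equation*}
\Psi_u(s)-\Psi_{u^H}(s)=\frac{1}{2p}\,e^{(Np-N-\alpha)s}\left(\int_{\mathbb{R}^N}\bigl(I_\alpha\ast(u^H)^p\bigr)(u^H)^p-\int_{\mathbb{R}^N}\bigl(I_\alpha\ast u^p\bigr)u^p\right),
\end{equation*}
so the equality $\Psi_{u^H}(s_{u^H})=\Psi_u(s_{u^H})$ already yields equality of the nonlocal terms, whatever the value of $s_{u^H}$, since the exponential factor is positive.
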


\begin{proof}
By Lemma \ref{lem4.7}, $E(u)=c^{po}$ and $P(u)=0$. Let $\tilde{E}$
and $\Gamma$
 be defined in (\ref{e3.3}) and (\ref{e3.5}),
respectively, and let $\gamma_u(\tau)=(0,((1-\tau)s_0+\tau s_1)\star
u)\in \Gamma$ be a path defined in (\ref{e3.6}). Denote
$\beta_u(\tau)=((1-\tau)s_0+\tau s_1)\star u$. Then,
$\beta_u\left({-s_0}/{(s_1-s_0)}\right)=u$, $\beta_u(\tau)\geq 0$
for every $\tau\in[0,1]$,
$\tilde{E}(\gamma_u(\tau))=E(\beta_u(\tau))<E(u)=c^{po}$ for any
$\tau\in \left([0,1]\setminus
\left\{{-s_0}/{(s_1-s_0)}\right\}\right)$.

For every closed half-space $H$ define the path $\gamma_u^H : [0, 1]
\to \mathbb{R}\times S_a$ by $\gamma_u^H(\tau) =
(0,(\beta_u(\tau))^H)$. By Lemma \ref{lem4.1} and
$\|u^H\|_r=\|u\|_r$ with $r\in [1,\infty)$, we have $\gamma_u^H \in
C([0, 1],\mathbb{R}\times S_a)$. By Lemmas \ref{lem4.1} and
\ref{lem4.2}, we obtain that $\tilde{E}(\gamma_u^H(\tau))\leq
\tilde{E}(\gamma_u(\tau))$ for every $\tau\in [0,1]$ and then
$\gamma_u^H(\tau)\in \Gamma$. Hence,
\begin{equation*}
\max_{\tau\in[0,1]}\tilde{E}(\gamma_u^H(\tau))\geq c^{po}.
\end{equation*}
Since for every $\tau\in \left([0,1]\setminus
\left\{{-s_0}/{(s_1-s_0)}\right\}\right)$,
\begin{equation*}
\tilde{E}(\gamma_u^H(\tau))\leq
\tilde{E}(\gamma_u(\tau))<E(u)=c^{po},
\end{equation*}
we deduce that
\begin{equation*}
\tilde{E}\left(\gamma_u^H\left(\frac{-s_0}{s_1-s_0}\right)\right)
=E(u^{H})=c^{po}.
\end{equation*}
Hence $E(u^H)=E(u)$, which implies that
\begin{equation*}
\int_{\mathbb{R}^N}\left(I_\alpha\ast
\left|u^H\right|^p\right)\left|u^H\right|^p=\int_{\mathbb{R}^N}(I_\alpha\ast
|u|^p)|u|^p.
\end{equation*}
By Lemma \ref{lem4.2}, we have $u^H=u$ or $u^H=u\circ \sigma_H$. By
Lemma \ref{lem4.4}, we complete the proof.
\end{proof}

\textbf{Proof of Theorem \ref{thm1.2}}. By Lemma \ref{lem4.7} and
Proposition \ref{lem4.5}, if $u\in \mathcal{G}$, then $|u|\in
\mathcal{G}$ and $\|\nabla |u|\|_2=\|\nabla u\|_2$. Then we can
proceed as Theorem 4.1 in \cite{Hajaiej-Stuart 2004}, obtaining that
\begin{equation*}
\mathcal{G}=\{e^{i\theta}|v|: \ \theta\in \mathbb{R}\ \mathrm{and}\
|v|>0\ \mathrm{in} \ \mathbb{R}^N\}.
\end{equation*}
Moreover, by Proposition \ref{lem4.6}, the proof is complete.

\bigskip

{\bf Acknowledgements.} This work is supported by the National
Natural Science Foundation of China (No. 12001403).


\begin{thebibliography}{10}
%
%

\bibitem{Bartsch-Liu-Liu_2020} T. Bartsch, Y. Liu, Z. Liu,
Normalized solutions for a class of nonlinear Choquard equations, SN
Partial Differ. Equ. Appl., (2020), 1-34.
\bibitem{Bartsch-Soave 2019} T. Bartsch, N. Soave, Multiple normalized
solutions for a competing system of Schr\"{o}dinger equations, Calc.
Var. Partial Differ. Equ., 58 (2019), 22.
\bibitem{Bellazzini-Jeanjean-Luo 2013}  J. Bellazzini, L. Jeanjean,  T.
Luo, Existence and instability of standing waves with prescribed
norm for a class of Schr\"{o}dinger-Poisson equations, Proc. London
Math. Soc., 107 (2013), 303-339.
\bibitem{Bellazzini-Siciliano} J. Bellazzini, G. Siciliano, Scaling
properties of functionals and existence of constrained minimizers,
J. Funct. Anal., 261 (2011), 2486-2507.
\bibitem{Brezis-Nirenberg 1983} H. Brezis, L. Nirenberg,  Positive solutions of nonlinear
elliptic equations involving critical Sobolev exponents, Commun.
Pure Appl. Math., 36 (1983), 437-477.
\bibitem{Brock-Solynin 2000} F. Brock, A. Yu. Solynin, An approach to symmetrization via
polarization, Trans. Amer. Math. Soc., 352(4) (2000), 1759-1796.
\bibitem{Cazenave-Lions 1982} T. Cazenave, P.L. Lions, Orbital stability of
standing waves for some nonlinear Schr\"{o}dinger equations, Commun.
Math. Phys., 85 (1982), 549-561.
\bibitem{Cingolani-Jeanjean 2019}  S. Cingolani, L. Jeanjean, Stationary
waves with prescribed $L^2$-norm for the planar
Schr\"{o}dinger-Poisson system, SIAM J. Math. Anal., 51(4) (2019),
3533-3568.
\bibitem{Feng-Yuan 2015} B. Feng, X. Yuan,  On the Cauchy problem for the
Schr\"{o}dinger-Hartree equation, Evol. Equ. Control Theory, 4
(2015), 431-445.
\bibitem{Gao-Yang-2} F. Gao, M. Yang, On nonlocal Choquard equations with
Hardy-Littlewood-Sobolev critical exponents, J. Math. Anal. Appl.,
448(2) (2017), 1006-1041.
\bibitem{Gao-Yang-1} F. Gao, M. Yang, On the Brezis-Nirenberg type critical problem
for nonlinear Choquard equation,  Sci. China Math., 61(7) (2018),
1219-1242.
\bibitem{Ghoussoub 93} N. Ghoussoub, Duality and Perturbation Methods in Critical Point
Theory, Cambridge Tracts in Mathematics, vol.107, Cambridge
University Press, Cambridge, 1993, with appendices by David
Robinson.
\bibitem{Gross 1996} E.P. Gross, Physics of many-Particle systems, Gordon Breach,
New York, Vol.1, 1996.
\bibitem{Hajaiej-Stuart 2004} H. Hajaiej, C.A. Stuart, On the variational
approach to the stability of standing waves for the nonlinear
Schr\"{o}dinger equation, Adv. Nonlinear Stud., 4(4) (2004),
469-501.
\bibitem{Jeanjean 1997} L. Jeanjean, Existence of solutions with prescribed norm for
semilinear elliptic equations, Nonlinear Anal., 28(10) (1997),
1633-1659.
\bibitem{Jeanjean-Tanaka 2003} L. Jeanjean, K. Tanaka, A remark on least
energy solutions in $R^N$, Proc. Amer. Math. Soc., 131(8) (2003),
2399-2408.
\bibitem{Jeanjean-Le} L. Jeanjean, T.T. Le,  Multiple normalized solutions for a Sobolev critical Schr\"{o}dinger
equation, arXiv: 2011.02945v1, 5 Nov 2020.
\bibitem{Li 2021-4} X. Li, Studies of normalized solutions to Schr\"{o}dinger equations with
Sobolev critical exponent and combined nonlinearities,
arXiv:2104.12997v2, 28 Apr 2021.
\bibitem{Li-Ma 2020} X. Li, S. Ma, Choquard equations with critical
nonlinearities,  Commun. Contemp. Math., 22(04) (2020), 1950023.
\bibitem{Li-Ma-Zhang 2019}  X. Li, S. Ma, G. Zhang, Existence and qualitative
properties of solutions for Choquard equations with a local term,
Nonlinear Anal.-RWA, 45 (2019), 1-25.
\bibitem{Li-Ye_JMP_2014}  G. Li,  H. Ye,  The existence of positive
solutions with prescribed $L^2$-norm for nonlinear Choquard
equations, J. Math. Phys., 55 (2014), 121501.
\bibitem{Lieb 1977}  E.H. Lieb, Existence and uniqueness of the minimizing solution of
Choquard's nonlinear equation, Stud. Appl. Math., 57(2) (1977),
93-105.
\bibitem{Lieb-Loss 2001} E.H. Lieb, M. Loss, Analysis, volume 14 of graduate studies in
mathematics, American Mathematical Society, Providence, RI, (4)
2001.
\bibitem{Liu-Shi 2018} K. Liu, C. Shi, Existence of stable standing
waves for the Schr\"{o}dinger Choquard equation, Bound. Value
Probl., (2018), 2018:160.
\bibitem{Luo 2019} X. Luo, Normalized standing waves for the Hartree
equations, J. Differential Equations, 267 (2019), 4493-4524.
\bibitem{Luo 2020} H. Luo, Nontrivial solutions for nonlinear Schr\"{o}dinger
Choquard equations with critical exponents, Appl. Math. Lett., 107
(2020), 106422.
\bibitem{Moroz-Schaftingen JFA 2013} V. Moroz, J. Van Schaftingen, Groundstates of nonlinear Choquard equations: existence,
qualitative properties and decay asymptotics, J. Funct. Anal.,
265(2) (2013), 153-184.
\bibitem{Moroz-Schaftingen 2015} V. Moroz, J. Van Schaftingen, Existence of groundstates for a class of nonlinear Choquard
equations, Trans. Amer. Math. Soc., 367 (2015), 6557-6579.
\bibitem{Moroz-Schaftingen 2017} V. Moroz, J. Van Schaftingen, A guide to the Choquard equation,
 J. Fixed Point Theory Appl., 19(1) (2017), 773-813.
\bibitem{Pekar 1954} S. Pekar, Untersuchung \"{u}ber die Elektronentheorie der
Kristalle, Akademie Verlag, Berlin, 1954.
\bibitem{Penrose 1996} R. Penrose, On gravity's role in quantum state reduction, Gen.
Rel. Grav., 28 (1996), 581-600.
\bibitem{Riesz1949AM} M. Riesz, L'int\'{e}grale de Riemann-Liouville et le probl\`{e}me de Cauchy, Acta Math., 81 (1949), 1-223.
\bibitem{Siciliano-Silva 2020} G. Siciliano, K. Silva, On the
structure of the Nehari set associated to a Schr\"{o}dinger-Poisson
system with prescribed mass: old and new results,
arXiv:2007.01718v1, 3 Jul 2020.
\bibitem{Soave JDE}  N. Soave, Normalized ground states for the NLS equation
with combined nonlinearities, J. Differential Equations, 269(9)
(2020), 6941-6987.
\bibitem{Soave JFA}  N. Soave, Normalized ground states for the NLS equation
with combined nonlinearities: The Sobolev critical case, J. Funct.
Anal., 279(6) (2020), 108610.
\bibitem{Van Schaftingen-Willem 2008} J. Van Schaftingen,  M. Willem, Symmetry of solutions of
semilinear elliptic problems, J. Eur. Math. Soc. (JEMS), 10(2)
(2008), 439-456.
\bibitem{Wei-Wu 2021} J.  Wei,  Y. Wu, Normalized
solutions for Schr\"{o}dinger equations with critical Sobolev
exponent and mixed nonlinearities, arXiv:2102.04030v1, 2021.
\bibitem{Weinstein 1983} M.I. Weinstein, Nonlinear Schr\"{o}dinger equations and sharp
interpolation estimates, Comm. Math. Phys., 87 (1983), 567-576.
\bibitem{Willem 1996} M. Willem, Minimax Theorems, Birkh\"{a}user, Boston, 1996.
\bibitem{Yang 2020} T. Yang, Normalized solutions for the fractional
Schr\"{o}dinger equation with a focusing nonlocal $L^2$-critical or
$L^2$-supercritical perturbation, J. Math. Phys., 61 (2020), 051505.
\bibitem{Ye 2016} H. Ye, Mass minimizers and concentration for nonlinear
Choquard equations in $R^N$, Topol. Methods Nonlinear Anal., 48
(2016), 393-417.
\bibitem{Yuan-Chen-Tang_2020} S. Yuan, S. Chen,  X. Tang,
Normalized  solutions for Choquard equations with general
nonlinearities,   Electron.  Res.  Arch., 28(1) (2020), 291-309.
\end{thebibliography}


\end{document}